\colorlet{RED}{red}
\newcommand{\field}[1]{\mathbb{#1}}    
\newcommand{\N}{\field{N}}
\newcommand{\R}{\field{R}}
\newcommand{\mc}[1]{\mathcal{#1}}
\newcommand{\Pb}{\field{P}}
\newcommand{\E}{\field{E}}
\DeclareMathOperator{\dist}{\mathrm{dist}}
\DeclareMathOperator{\spt}{\mathrm{spt}}
\DeclareMathOperator{\dimh}{\mathrm{dim}_{\mathrm{H}}}
\DeclareMathOperator{\dimp}{\mathrm{dim}_{\mathrm{P}}}
\DeclareMathOperator{\dimhu}{\overline{\mathrm{dim}}_{\mathrm{H}}}
\DeclareMathOperator{\dimhl}{\underline{\mathrm{dim}}_{\mathrm{H}}}
\DeclareMathOperator{\dimBu}{\overline{\mathrm{dim}}_{\mathrm{B}}}
\DeclareMathOperator{\dimBl}{\underline{\mathrm{dim}}_{\mathrm{B}}}
\DeclareMathOperator{\dimMBu}{\overline{\mathrm{dim}}_{\mathrm{MB}}}
\DeclareMathOperator{\dimMBl}{\underline{\mathrm{dim}}_{\mathrm{MB}}}
\DeclareMathOperator{\dimMu}{\overline{\mathrm{dim}}_{\mathrm{M}}}
\DeclareMathOperator{\dimMl}{\underline{\mathrm{dim}}_{\mathrm{M}}}
\DeclareMathOperator{\dimloc}{\mathrm{dim}_{\mathrm{loc}}}
\DeclareMathOperator{\dimlocu}{\overline{\mathrm{dim}}_{\mathrm{loc}}}
\DeclareMathOperator{\dimlocl}{\underline{\mathrm{dim}}_{\mathrm{loc}}}
\DeclareMathOperator*{\esssup}{ess\,sup}
\DeclareMathOperator*{\essinf}{ess\,inf}
\DeclareMathOperator{\Jmurt}{\mathcal{J}_{\underline{r},t}^{\mu}}
\DeclareMathOperator{\Jmurz}{\mathcal{J}_{\underline{r},0}^{\mu}}
\DeclareMathOperator{\JmuVt}{\mathcal{J}_{\underline{V},t}^{\mu}}
\DeclareMathOperator{\JmuVz}{\mathcal{J}_{\underline{V},0}^{\mu}}
\DeclareMathOperator{\Leb}{\mathcal{L}_d}
\DeclareMathOperator{\muOne}{{\zeta}}
\DeclareMathOperator{\muTwo}{{\widetilde{\zeta}}}
\DeclareMathOperator{\ep}{\varepsilon}
\theoremstyle{plain}
\newtheorem{theorem}{Theorem}[section]      
\newtheorem{lemma}[theorem]{Lemma}      
\newtheorem{corollary}[theorem]{Corollary}      
\newtheorem{proposition}[theorem]{Proposition}
\newtheorem{conjecture}[theorem]{Conjecture}
\theoremstyle{definition}
\newtheorem{definition}[theorem]{Definition}
\theoremstyle{remark}
\newtheorem{remark}[theorem]{Remark}    
\numberwithin{equation}{section}
\begin{document}

\title[Ekstr\"om--Persson conjecture]{Hitting probabilities and the Ekstr\"om--Persson conjecture}

\author[E. J\"arvenp\"a\"a]{Esa J\"arvenp\"a\"a}
\address{Department of Mathematical Sciences, P.O. Box 3000, 90014 
University of Oulu, Finland}
\email{esa.jarvenpaa@oulu.fi}

\author[M. Myllyoja]{Markus Myllyoja}
\address{Department of Mathematical Sciences, P.O. Box 3000, 90014 
University of Oulu, Finland}
\email{markus.myllyoja@oulu.fi}

\author[S. Seuret]{St\'ephane Seuret}
\address{Universit\'e Paris-Est Cr\'eteil, Universit\`e Gustave Eiffel, CNRS, LAMA (UMR 8050), 
F-94010, Cr\'eteil, France}
\email{seuret@u-pec.fr}

\thanks{This research is partly funded by the Bézout Labex, funded by ANR, reference 
ANR-10-LABX-58, which supported the visit of EJ to Université Paris-Est Créteil. MM 
acknowledges the support of Emil Aaltonen Foundation.}

\subjclass[2020]{28A80, 60D05}
\keywords{Random covering set, Hausdorff and packing dimensions, hitting probabilities, Borel measures}

\begin{abstract}
We consider the Ekst\"om--Persson conjecture concerning the value of the Hausdorff
dimension of random covering sets formed by balls with radii $(k^{-\alpha})_{k=1}^\infty$ and 
centres chosen independently at random according to an arbitrary Borel probability measure 
$\mu$ on $\R^d$. The conjecture has been solved positively in the case 
$\frac 1\alpha\le\dimhu\mu$, where $\dimhu\mu$ stands for the upper Hausdorff dimension of 
$\mu$. In this paper, we develop a new approach in order to answer the full conjecture,
proving in particular  that the conjectured value is only a lower bound for the
dimension. Our approach opens the way to study more general limsup sets, and has consequences 
on the so-called hitting probability questions. For instance, we are able to determine whether 
and what part of a deterministic analytic set can be hit by random covering sets 
formed by open sets. 
\end{abstract}

\maketitle

\section{Introduction}\label{section:intro}
 
In this article, we settle a conjecture proposed by Ekstr\"om
and Persson in \cite{EP2018} regarding the Hausdorff dimension of the set of points that are 
covered infinitely many times by randomly distributed balls, using a  new approach that has 
consequences beyond this covering problem.
 
We start by fixing some notation.  In $\R^d$ endowed with the Euclidean norm, let $B(x,r)$ 
stand for the open ball and $\overline{B}(x,r)$ for the closed ball with centre
$x\in \R^d$ and radius $r>0$, respectively. We will use the notation
$\underline{r}:=(r_k)_{k=1}^{\infty}$ for sequences of positive numbers.
For $A\subseteq \R^d$, we denote by $\mathcal{P}(A)$ the space of compactly
supported Borel probability measures whose supports are contained in $A$ and denote the
support of a measure $\mu \in \mc{P}(\R^d)$ by 
$\spt\mu:=\{x \in\R^d \mid \mu(B(x,r))>0 {\text{ for all }}r>0\}$. 

\begin{definition}\label{definition:randomcoveringset}
Let $\mu\in\mathcal{P}(\R^d)$ and consider the probability space
$(\Omega,\mathbb{P})$, where $\Omega:=(\spt\mu)^{\N}$ and $\mathbb{P}:=\mu^{\N}$.
Let $\underline{r}$ be {a} sequence of positive numbers. Given
$\omega \in \Omega$, the \emph{random covering set} generated by the sequence
$\underline{r}$ is the limsup set
\[
E_{\underline{r}}(\omega)\coloneqq\limsup_{k\to\infty}B(\omega_{k},r_k)
  =\bigcap_{n=1}^{\infty}\bigcup_{k=n}^{\infty}B(\omega_{ k},r_k).
\]
\end{definition}

The study of random covering sets, or random limsup sets, finds its origins in the late 1890s
with Borel's seminal work on Taylor series \cite{Borel}, in which he noticed that a given
point on the circle $S^1$ almost surely belongs to infinitely many randomly placed 
arcs provided the
sum of their lengths is infinite. One major improvement of this result was obtained by
Shepp in \cite{Shepp} who  found a necessary and sufficient condition on the sequence 
$\underline{r}$ to satisfy that  $E_{\underline{r}}(\omega)= S^1$ almost surely when $\mu$ is 
the one-dimensional Lebesgue measure, answering a question by Dvoretzky \cite{Dvoretzky} (see 
also works by Fan and Kahane, e.g. \cite{Fan-Kahane,Fan2}).  
These  results, related to the Borel--Cantelli lemma, have inspired extensive research, 
which we detail hereafter.  

In this paper, we focus on the Hausdorff dimension of random covering sets, building upon
several works leading to a better understanding of the dimensional properties of such sets. 
First, observe that, by the Kolmogorov's zero-one law, the Hausdorff 
dimension $\dimh E_{\underline{r}}(\omega)$ is constant $\mathbb{P}$-almost surely
since $\{\omega\in\Omega\mid\dimh E_{\underline{r}}(\omega)\le\beta\}$ is
a tail event for all $\beta\ge 0$ (see e.g. \cite[Lemma~3.1]{JJKLSX2017} for the measurability 
arguments). We denote by $f_{\mu}(\underline{r})$ this almost sure value of
$\dimh E_{\underline{r}}(\omega)$.

\smallskip

The Hausdorff dimension of the limsup sets  $E_{\underline{r}}(\omega)$ relies on the 
multifractal properties of the measure $\mu$ used to draw the
balls $B(\omega_k,r_k)$. Let us recall some of the multifractal concepts for measures (similar 
quantities can be defined for describing the pointwise regularity of functions). The lower 
and upper local dimensions of $\mu$ at a point $x\in \R^d$ are defined by
\[
\dimlocl \mu(x):=\liminf_{r\to 0}\frac{\log \mu(B(x,r))}{\log r}
\,\text{ and }\,
\dimlocu \mu(x):=\limsup_{r\to 0}\frac{\log \mu(B(x,r))}{\log r}.
\]
Then the lower and upper Hausdorff dimensions of $\mu$ are
\[
\dimhl \mu:=\essinf_{x\sim \mu}\dimlocl \mu(x)
\,\text{ and }\,
\dimhu \mu:=\esssup_{x\sim \mu}\dimlocl \mu(x).
\]

\begin{definition}
Let $\mu \in \mathcal{P}(\R^d)$. For every $t\geq 0$, let 
\begin{equation}
\label{def-Dmut}
D_\mu(t):= \{ x \in \spt \mu\mid \dimlocl \mu (x)\leq t \}.
\end{equation}
The lower Hausdorff and packing multifractal spectra of the lower local dimension of $\mu$  
are defined by 
\begin{align*}
F_{\mu}(t)& := \dimh D_\mu(t),\\
H_{\mu}(t)& :=\dimp D_\mu(t),
\end{align*}
where $\dimh$ and $\dimp$ stand for the Hausdorff and packing dimensions, respectively.
\end{definition}

The connection between multifractals and random limsup sets
has been a key issue for a long time. For instance, in order to study the  regularity 
of lacunary wavelet series in \cite{Jaffard-lac} or Lévy processes in \cite{Jaffard-levy},  
Jaffard needed to compute the (almost sure) Hausdorff dimension of sets
$E_{\underline{r}}(\omega)$ for uniformly distributed balls with random radii $\underline{r}$. 
To achieve his goal, he was led to prove  in \cite{Jaffard-levy} a first version of what is 
nowadays called, after the fundamental article by Beresnevich and Velani \cite{BV}, a 
{\em mass transference principle}, which allows to compute the dimension of limsup 
coverings by balls in $\R^d$ (see also \cite{BS-Ubiq,Daviaud,E-B2024,KR} for other versions of 
the mass transference principle). Applying his deterministic theorem to random balls, Jaffard
found the almost sure Hausdorff dimension of the set of those points covered infinitely often
by uniformly distributed balls of radii $\underline{r}$, this dimension depending on the decay 
rate $s_2(\underline{r})$ of $\underline{r}$ to 0 defined by
\begin{equation}\label{def-s2}
s_2(\underline{r}):=\inf\Bigl\{ t>0\mid \sum_{k=1}^\infty r_k^{t}<\infty\Bigr\}.
\end{equation}
It is easily seen that $s_2(\underline{r})$ is an upper bound for 
$\dimh E_{\underline{r}}(\omega)$ for any realisation $\omega$ by observing
that, for every $N\in \N$,
\[
E_{\underline{r}}(\omega)\subseteq \bigcup_{k=N}^{\infty}B(\omega_k,r_k).
\]
Jaffard's result was later refined  for balls of the form $B(\omega_k,k^{-\alpha})$  by Fan
and Wu \cite{Fan-Wu} and Durand \cite{Durand} using slightly different approaches. This is one 
illustration of the increasing role of  multifractal analysis in various mathematical fields, 
see \cite{ViklundLawler,BS-Levy,Shmerkin,JaffardMartin,BarralFeng,BanicaVega} for some other 
recent examples.

There are several directions in which these results can be generalised. 
A first natural extension consists in replacing balls by more general sets in $\R^d$, still  
distributed according to the $d$-dimensional Lebesgue measure. Let us mention \cite{JJKLS}
that deals with uniformly distributed affine rectangles and \cite{Persson} 
where Persson generalises the previously obtained bound (which relies on the 
singular value function associated with the rectangles) for open sets. Finally, the Hausdorff 
dimension of random covering obtained as a limsup of randomly distributed general Lebesgue 
measurable sets 
was completely determined in \cite{FJJV} when $\mu$ has a component  absolutely continuous 
with respect to the $d$-dimensional Lebesgue measure. There are also connections with hitting 
probability questions, which we explore hereafter.

The second direction consists in drawing balls, rectangles or open sets in spaces different 
than $\R^d$: this has been studied in \cite{JJKLSX2017} where balls in metric 
spaces were considered, in \cite{EJJ2020,Myllyoja1} for rectangles in Heisenberg 
groups, in \cite{EJJV2018} for rectangles in products of Ahlfors regular spaces and in 
\cite{E-B2024} for open sets in Ahlfors regular unimodular groups.

A third possibility consists in replacing the random sequence 
$(\omega_n)_{n=1}^\infty$ by orbits $(T^n(x))_{n=1}^\infty$ of 
well-chosen $\mu$-invariant dynamical systems $T\colon X\to X$, following the idea that 
sufficiently mixing dynamical systems give rise to (typically with respect to $\mu$) well 
distributed orbits. This is closely related to the famous shrinking targets problem, see 
\cite{Hill-Velani, FST, Liao-Seuret,Liao-Velani-Zorin} among many references in this active    
research field.

The last extension, which we mainly consider in this article, focuses on $\R^d$ but in
the situation where the balls are distributed according to a general probability 
measure $\mu$ that is purely singular with respect to the Lebesgue
measure and, for instance, may have Cantor-like support. We will consider the special case 
\begin{equation}\label{defralpha}
\underline r=\underline r(\alpha):=(k^{-\alpha})_{ k=1}^\infty
\end{equation}
for $\alpha>0$, and 
we write $E_{\alpha}(\omega)$ for the corresponding limsup set, that is,
\[
E_{\alpha}(\omega):=\limsup_{k\to\infty}B(\omega_{k},k^{-\alpha}).
\]
We also use the short hand notation
\begin{equation}\label{def-fmualpha}
f_{\mu}(\alpha):= \mbox{almost sure value of } \dimh E_{\alpha}(\omega).
\end{equation}

In \cite{Seuret}, Seuret considered the situation where $\mu$ is a Gibbs measure on the 
symbolic space, demonstrating that, in this case, the dimension $f_\mu(\alpha)$ indeed depends 
on the "standard" multifractal spectrum of $\mu$. Then Ekstr\"om and 
Persson in \cite{EP2018} considered balls drawn randomly according to a general measure on 
$\R^d$ and conjectured that $f_\mu(\alpha)$ is equal to the value 
$\overline{F}_\mu(\frac{1}{\alpha})$ of the increasing 1-Lipschitz hull of the lower 
multifractal spectrum $F_\mu(\frac{1}{\alpha})$ of $\mu$. Recall that 
if $A\subseteq\mathopen[0,\infty\mathclose[$ and $g\colon A\to\R$ is bounded from above, the
increasing $1$-Lipschitz hull $\overline g\colon\mathopen[0,\infty\mathclose[\to\R$ of the
function $g$ is defined by setting
\begin{equation}
\label{defgbar}
\overline{g}(t):= \inf \{ h(t)\mid h\geq g \text{ is increasing and } 1\text{-Lipschitz 
  continuous} \}.
\end{equation}

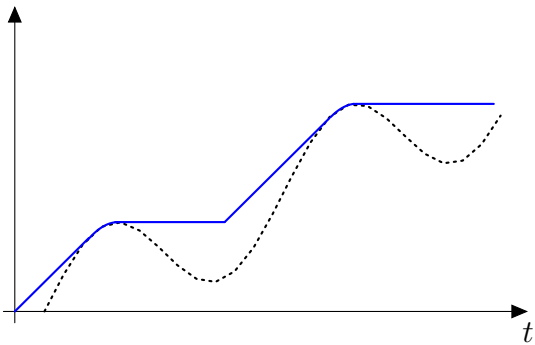
\begin{figure}[ht!]
\begin{center}
        \begin{tikzpicture}[line cap=round, line join=round, >=triangle 45,
                            x=1.0cm, y=1.0cm, scale=1.5]
                           
      \draw [->,color=black] (-0.1,0) -- (4.5,0);
      \draw [->,color=black] (0,-0.1) -- (0,2.7);
      \draw [thick,dotted, domain=0:4]  plot ({\x+0.26},{\x/2+sin(3*deg(\x))/2});
      \draw [thick, domain=0:0.65, color=blue]  plot ({\x},{\x});
      \draw [thick, domain=0.37:0.65, color=blue]  plot ({\x+0.27},{\x/2+sin(3*deg(\x))/2}); 
      \draw [thick, domain=0.88:1.84, color=blue]  plot ({\x},{0.79});          
      \draw [thick, domain=0.79:1.71, color=blue]  plot ({\x+1.05},{\x});
      \draw [thick, domain=2.5:2.7, color=blue]  plot ({\x+0.27},{\x/2+sin(3*deg(\x))/2});
      \draw [thick, domain=2.97:4.2, color=blue]  plot ({\x},{1.835});          
      \draw (4.5,0) node [below] {$t$};

      \end{tikzpicture}
\end{center}
\caption{Illustration of a function $g$ and its increasing 1-Lipschitz hull $\overline{g}$
   plotted in dotted black and blue, respectively.}
\label{figure:lipshitzhull}
\end{figure}

The following conjecture regarding $f_\mu(\alpha)$ was stated in \cite{EP2018}:

\begin{conjecture}[Ekström,Persson]\label{conjecture:EP}
    For every $\alpha>0$, 
    \[
    f_{\mu}(\alpha)=\overline{F}_{\mu}\Bigl(\frac 1\alpha\Bigr){.}
    \]
\end{conjecture}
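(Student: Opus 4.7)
The plan is to establish the conjectured equality through two matching inequalities, using different tools for each direction. A convenient preliminary observation is the duality formula
\[
\overline{F}_\mu(1/\alpha)=\sup_{s\ge 0}\bigl(F_\mu(s)-\max(s-1/\alpha,0)\bigr),
\]
which follows directly from definition~\eqref{defgbar}: any increasing $1$-Lipschitz $h\ge F_\mu$ must satisfy $h(1/\alpha)\ge F_\mu(s)$ for $s\le 1/\alpha$ and $h(1/\alpha)\ge F_\mu(s)-(s-1/\alpha)$ for $s>1/\alpha$, and the best such $h$ attains this envelope. Each $s$ therefore yields an independent candidate contribution to $f_\mu(\alpha)$, and the conjecture reduces to showing that the supremum of these candidates is simultaneously a lower and an upper bound.

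For the lower bound $f_\mu(\alpha)\ge\overline{F}_\mu(1/\alpha)$ I would split according to the sign of $s-1/\alpha$. If $s<1/\alpha$, pick $\sigma<F_\mu(s)$ and use Frostman's lemma on $D_\mu(s)$ to obtain a compactly supported probability measure $\nu$ with $\nu(B(x,r))\lesssim r^\sigma$. For $\nu$-typical $x$ the condition $\dimlocl\mu(x)\le s<1/\alpha$ forces $\mu(B(x,k^{-\alpha}))$ to decay sub-linearly in $k$ along a positive-density subsequence of integers, so that $\sum_k\mu(B(x,k^{-\alpha}))=+\infty$; a second-moment computation on the quasi-independent events $\{\omega_k\in B(x,k^{-\alpha})\}$ then shows $x\in E_\alpha(\omega)$ a.s., and Fubini gives $\nu(E_\alpha(\omega))=1$ a.s., whence $\dimh E_\alpha(\omega)\ge\sigma$. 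If $s>1/\alpha$, points of $D_\mu(s)$ are covered only finitely often a.s., so I would switch to a thickening construction in the spirit of Jarnik--Besicovitch: near each $x\in D_\mu(s)$ the random centres $\omega_k$ at distance $\lesssim k^{-\alpha}$ of $x$ trace out shells of auxiliary points which do belong to $E_\alpha(\omega)$, and building a Frostman measure of exponent close to $F_\mu(s)-(s-1/\alpha)$ on those shells should exhibit the required subset of $E_\alpha(\omega)$.

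For the upper bound $f_\mu(\alpha)\le\overline{F}_\mu(1/\alpha)$ I would partition $\spt\mu$ into multifractal slices $\Delta_s:=\{x\in\spt\mu\mid\dimlocl\mu(x)\in[s,s+\delta)\}$ and bound each $\dimh\bigl(E_\alpha(\omega)\cap U_\delta(\Delta_s)\bigr)$ separately, where $U_\delta$ is a suitable thickening. On a dyadic cube $Q$ of side $2^{-n}$ meeting $\Delta_s$ one has $\mu(Q)\approx 2^{-ns}$; among the $K_n:=\lceil 2^{n/\alpha}\rceil$ centres whose associated ball has radius $\ge 2^{-n}$, a Bernstein-type concentration gives $\approx K_n\cdot 2^{-ns}=2^{n(1/\alpha-s)}$ of them landing in $Q$ when $s<1/\alpha$, producing a natural cover of $E_\alpha(\omega)\cap Q$ by balls of radius $2^{-n}$. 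Since $\Delta_s$ itself can be covered by $\approx 2^{nF_\mu(s)}$ such cubes, the resulting Hausdorff $\rho$-sum is summable across cubes and scales once $\rho>F_\mu(s)+\max(1/\alpha-s,0)$; optimising over $s$ would then recover $\overline{F}_\mu(1/\alpha)$.

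The main obstacle is the upper bound. The covering above is controlled by \emph{Hausdorff} information about $\Delta_s$, whereas the actual geometry of the random balls $B(\omega_k,k^{-\alpha})$ is sensitive to finer \emph{packing}-type data on $\mu$; the fact that the excerpt introduces $F_\mu$ and $H_\mu$ in parallel is a hint that $F_\mu$ alone may not suffice. The dangerous scenario is that $\mu$ concentrates on subsets of small Hausdorff dimension but large packing dimension, so that the random centres cluster along configurations invisible to $F_\mu$ and the dyadic cover becomes wasteful. Making the covering efficient enough---by refining the choice of scales, exploiting cancellations between different slices $\Delta_s$, or showing that such clustering cannot inflate $\dimh$ beyond $\overline{F}_\mu(1/\alpha)$---is the crux of the argument and the point at which the whole proposal stands or falls.
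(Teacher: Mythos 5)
There is a fundamental problem: the statement you are trying to prove is false, and the paper in fact refutes it. What the paper establishes (Theorem~\ref{theorem:dimhatleastconjecture}, proved via Theorem~\ref{thm:dimhofintersectionwithsublevelset}) is only the lower bound $\overline{F}_{\mu}(\frac1\alpha)\le f_{\mu}(\alpha)$, together with the upper bound $\min\{\frac1\alpha,\overline{H}_{\mu}(\frac1\alpha)\}$, and in Section~\ref{example:dimensionispackingspectrum} it constructs a measure $\muTwo$ (Dirac masses on the gap midpoints of a Cantor set $C$ with $\dimh C=s<u=\dimp C$, weighted by $2^{\beta\log\ell_k/(-\log a)}$) for which $f_{\muTwo}(\alpha)=\overline{H}_{\muTwo}(\frac1\alpha)>\overline{F}_{\muTwo}(\frac1\alpha)$ on a whole range of $\alpha$; for $\beta s-u<\frac1\alpha<\beta s-s$ one even has $\overline{F}_{\muTwo}(\frac1\alpha)=0$ while $f_{\muTwo}(\alpha)=\frac1\alpha-(\beta s-u)>0$. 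So no argument can close the upper-bound half of your plan.

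Your own final paragraph correctly identifies where your upper-bound argument breaks, and it is worth seeing why this is not a repairable technicality. Your covering of $E_{\alpha}(\omega)\cap Q$ uses balls of the single prescribed radius $k^{-\alpha}$, so the number of cubes of side comparable to $k^{-\alpha}$ meeting (a neighbourhood of) $D_\mu(t)$ enters at \emph{every} scale $k$; that count is a box-counting quantity, hence governed along bad subsequences of scales by packing-type data, not by $F_\mu$. Hausdorff dimension only guarantees efficient covers with cubes of wildly varying sizes, which you are not allowed to use here. Concretely, for $\muTwo$ one has $\muTwo(C(k^{-\alpha}))\ge k^{-\alpha(\beta s-u+\varepsilon)}$ for infinitely many $k$, and the paper's Lemma~\ref{lemma:modifiedEP9.6} (a decomposition plus second-moment argument relying on Lemma~\ref{lemma:EP9.5}) converts exactly this subsequence of "clustered" scales into a limsup set of dimension $\frac1\alpha-(\beta s-u)$, beating $\overline{F}_{\muTwo}(\frac1\alpha)$. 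By contrast, the lower-bound half of your proposal is in the right spirit for the part of the statement that is true: the case $s<\frac1\alpha$ is essentially Lemma~\ref{lemma:EP7.1} plus Frostman, but your sketch for $s>\frac1\alpha$ ("thickening in the spirit of Jarnik--Besicovitch") is where the real work lies; the paper instead develops the hitting operators $\JmuVt$, their transfinite fixed points (Proposition~\ref{lemma:Itildeinvariantsetfoundaftercountablymanysteps} and Proposition~\ref{lemma:conditionforpositivehittingprobability}), and a co-dimension argument via Lemma~\ref{lemma:JJKLSX2.3} to prove Theorem~\ref{thm:dimhofintersectionwithsublevelset}. You should reorient the write-up toward proving the two-sided bounds of Theorem~\ref{theorem:dimhatleastconjecture} and acknowledging the counterexample, rather than the conjectured equality.
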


In \cite[Theorem~2.3]{JJMS2025}, it was shown that Conjecture~\ref{conjecture:EP} is at least 
partly true:

\begin{theorem}[Järvenpää, Järvenpää, Myllyoja, Stenflo]\label{theorem:nminusalpha}
Let $\mu\in \mc{P}(\R^d)$ and $\alpha>0$. If $\frac{1}{\alpha}\le\dimhu\mu$,
then 
\[
f_{\mu}(\alpha)=\overline{F}_{\mu}\Bigl(\frac 1\alpha\Bigr){=}\frac{1}{\alpha} .
\]
\end{theorem}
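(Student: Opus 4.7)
My plan is to split the statement into the deterministic identity $\overline{F}_{\mu}(1/\alpha)=1/\alpha$, the unconditional upper bound $f_{\mu}(\alpha)\le 1/\alpha$, and the lower bound $f_{\mu}(\alpha)\ge 1/\alpha$; only the last uses the hypothesis $1/\alpha\le\dimhu\mu$.

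For the identity, I would first establish $F_{\mu}(t)\le t$ for every $t\ge 0$ by a standard Vitali argument: each $x\in D_{\mu}(t)$ admits arbitrarily small radii $r_x$ with $\mu(B(x,r_x))\ge r_x^{t+\epsilon}$, so after disjointification one has $\sum r_i^{t+\epsilon}\le\mu(\R^d)=1$, whence $\dimh D_{\mu}(t)\le t+\epsilon$ for every $\epsilon>0$. This gives $\overline{F}_{\mu}(t)\le t$. For the converse, set $t_0:=\dimhu\mu$; for every $s<t_0$ the set $\{\dimlocl\mu>s\}$ has positive $\mu$-measure, and a Frostman / mass distribution argument on this set (after a pigeonhole to get a uniform Frostman bound $\mu(B(x,r))\le r^{s}$) yields Hausdorff dimension at least $s$. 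Since $\mu(D_{\mu}(t_0))=1$ by definition of the essential supremum, $D_{\mu}(t_0)\cap\{\dimlocl\mu>s\}$ still has positive $\mu$-measure, hence $\dimh D_{\mu}(t_0)\ge s$; letting $s\to t_0$ gives $F_{\mu}(t_0)=t_0$. By the $1$-Lipschitz and increasing nature of $\overline{F}_{\mu}$, the equality $\overline{F}_{\mu}(t_0)=t_0$ forces $\overline{F}_{\mu}(t)\ge t_0-(t_0-t)=t$ for every $t\le t_0$; combined with $\overline{F}_{\mu}\le\mathrm{id}$ this yields $\overline{F}_{\mu}(t)=t$ throughout $[0,t_0]$, in particular at $t=1/\alpha$.

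The upper bound $f_{\mu}(\alpha)\le 1/\alpha$ is the $s_2$-bound recalled in the introduction, since $s_2(\underline{r}(\alpha))=1/\alpha$. For the lower bound, fix $\epsilon>0$, set $s:=1/\alpha-\epsilon$, and use $s<\dimhu\mu$ together with a pigeonhole to produce a compact set $K\subseteq\spt\mu$ with $\mu(K)>0$ on which the uniform Frostman bound $\mu(B(x,r))\le Cr^s$ holds for every $x\in K$ and $r\le r_0$. The plan is then to construct a random non-trivial probability measure $\nu_{\omega}$ on $E_{\alpha}(\omega)$ as a weak subsequential limit of normalised approximants of the form
\[
\nu_n:=Z_n^{-1}\sum_{k=1}^{n}\mathbf{1}_{K}(\omega_k)\,\Leb(B(\omega_k,k^{-\alpha}))^{-1}\,\mathbf{1}_{B(\omega_k,k^{-\alpha})},
\]
with $Z_n$ chosen so that $\E Z_n\asymp n\mu(K)$, and to bound $\E I_t(\nu_n):=\E\iint|x-y|^{-t}\,d\nu_n(x)\,d\nu_n(y)$ uniformly in $n$ for every $t<s$. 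Frostman's energy criterion then gives $\dimh E_{\alpha}(\omega)\ge t$ almost surely; sending $t\to s$ and then $\epsilon\to 0$ completes the proof.

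The main obstacle, as in the classical Lebesgue-case arguments of Jaffard \cite{Jaffard-levy}, Fan--Wu \cite{Fan-Wu} and Durand \cite{Durand}, is controlling $\E I_t(\nu_n)$ uniformly in $n$. The independence of the $\omega_k$'s allows the diagonal $(k=k')$ contribution to be summed via $\sum k^{\alpha t-2}$-type tails, which are summable as soon as $t<1/\alpha$, while the off-diagonal terms reduce through Fubini to the pair integral $\iint|x-y|^{-t}\,d\mu|_K(x)\,d\mu|_K(y)$, finite for $t<s$ because the uniform Frostman condition on $K$ implies the $t$-energy of $\mu|_K$ is finite. Making this work with a purely singular base measure $\mu$---so that $\mu|_K$ plays the role of the ambient Lebesgue factor used in the Ahlfors-regular case---and choosing the normalisations $Z_n$ so that the weak limit remains supported on $E_{\alpha}(\omega)$ rather than escaping to its closure, is the technical heart of the argument.
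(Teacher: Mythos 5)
Your decomposition is the right one, and the two deterministic parts are fine: the Vitali argument for $F_{\mu}(t)\le t$, the Billingsley/Frostman argument for $F_{\mu}(\dimhu\mu)\ge\dimhu\mu$, and the $1$-Lipschitz propagation to $\overline{F}_{\mu}(t)=t$ on $[0,\dimhu\mu]$ are exactly the facts the paper invokes after stating the theorem, and the covering upper bound $f_{\mu}(\alpha)\le 1/\alpha$ is the one recalled in the introduction.

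The gap is in the lower bound, and it is precisely at the point you defer to "the technical heart". With the cumulative construction $\nu_n=Z_n^{-1}\sum_{k=1}^{n}(\cdots)$, any weak subsequential limit $\nu_\omega$ is supported on $\bigcap_{N}\overline{\bigcup_{k\ge N}B(\omega_k,k^{-\alpha})}$, because the contribution of any fixed initial block vanishes as $Z_n\to\infty$. But since the $\omega_k$ are i.i.d.\ with law $\mu$, every ball centred in $\spt\mu$ contains $\omega_k$ for infinitely many $k$ almost surely, so $\overline{\bigcup_{k\ge N}B(\omega_k,k^{-\alpha})}\supseteq\spt\mu$ for every $N$. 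The limit measure is therefore only known to live on $\spt\mu$, and the energy criterion then says nothing about $\dimh E_{\alpha}(\omega)$. No choice of the scalar normalisation $Z_n$ can repair this, since normalisation does not affect the support of the limit. The known repairs all replace the cumulative sum by a genuinely nested structure: either a Cantor-type selection of balls generation by generation (Jaffard, Durand, the mass transference principle), or the device used in this paper, namely approximants $\varphi_n$ supported on disjoint index blocks $[M_n,N_n]$ with $\varphi_n\,d\nu\to\nu$ weakly, to which Lemma~\ref{EJJlemma} applies and produces a measure on $\spt\nu\cap\limsup_n\spt\varphi_n\subseteq E_{\alpha}(\omega)$; and one must then separately control the energy (or a Frostman bound) of \emph{that} measure, which does not follow from a uniform bound on $\E I_t(\nu_n)$ alone. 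As written, your argument proves only that $\spt\mu$ carries a measure of finite $t$-energy, which you already knew.

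For comparison, the paper does not prove the lower bound by an energy construction at all: it deduces $f_{\mu}(\alpha)\ge 1/\alpha$ from Theorem~\ref{thm:dimhofintersectionwithsublevelset} applied with $t=\dimhu\mu$, whose proof combines the fixed-point property of Frostman supports under the hitting operators (so that Proposition~\ref{lemma:conditionforpositivehittingprobability} yields nonempty intersection with every sufficiently large compact subset of $D_{\mu}(t)$) with a co-dimension argument using auxiliary random fractals. That route avoids constructing a Frostman measure on $E_{\alpha}(\omega)$ entirely, which is exactly the step your proposal leaves open.
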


Theorem~\ref{theorem:nminusalpha} proves Conjecture~\ref{conjecture:EP} in the case 
$\frac 1\alpha \leq \dimhu \mu$, i.e.{,} when the random balls are quite small. Note that 
the upper bound $f_\mu(\alpha)\leq \frac{1}{\alpha}$ is immediate by using a trivial covering 
 $\{B(\omega_k,k^{-\alpha})\}_{k=N}^\infty$ for $E_{\alpha}(\omega)$, and observe 
that the equality $\overline{F}_{\mu}(\frac 1\alpha)=\frac 1\alpha$ above
for $\frac 1\alpha \leq \dimhu \mu$ 
follows by combining the well-known fact that $F_{\mu}(t)\leq t$ for any $t$ with the fact 
that $F_{\mu}(\dimhu \mu)\geq \dimhu \mu$. 

Let us also mention that, in the region $s_2(\underline r)\le\dimhu\mu$, precise 
estimates regarding $f_\mu(\underline{r})$ were obtained in 
\cite[Theorem 2.5]{JJMS2025} for general sequences of radii $\underline{r}$, in terms of
$s_2(\underline{r})$ and the $\mu$-essential values of the local dimensions of $\mu$.

The remaining open question is thus to examine the validity of Conjecture \ref{conjecture:EP} 
in the case where $\frac 1\alpha > \dimhu \mu$, i.e.{,} when the random balls have large
diameters. In this situation, it is expected that some overlaps between these larger balls  
will occur, yielding a dimension $f_\mu(\alpha)$ smaller than $\frac1\alpha$. 

The following bounds for $f_{\mu}(\alpha)$ are proved in \cite{EP2018}:

\begin{theorem}[Ekström, Persson]\label{theorem:EPThm2}
For every $\alpha>0$, it is true that
\[
\lim_{s\to\frac 1\alpha -}F_{\mu}(s)\leq f_{\mu}(\alpha)\leq \min \Bigl\{\frac{1}{\alpha},
 \overline{H}_{\mu}\Bigl(\frac 1\alpha\Bigr) \Bigr\}.
\]
\end{theorem}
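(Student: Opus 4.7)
The plan is to prove the three inequalities separately. The lower bound comes from a Frostman-measure construction combined with the second Borel--Cantelli lemma; the first upper bound $f_{\mu}(\alpha)\leq\frac{1}{\alpha}$ from the obvious covering; and the second upper bound $f_{\mu}(\alpha)\leq\overline{H}_{\mu}(\frac{1}{\alpha})$ is the main difficulty, with my strategy sketched at the end.

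For the lower bound, fix $s<\frac{1}{\alpha}$ and $\ep>0$, and pick $\eta>0$ with $s+\eta<\frac{1}{\alpha}$. Frostman's lemma applied to $D_{\mu}(s)$ supplies a compactly supported Borel probability measure $\nu$ with $\spt\nu\subseteq D_{\mu}(s)$ and $\nu(B(x,r))\leq Cr^{F_{\mu}(s)-\ep}$ for all balls. For each $x\in D_{\mu}(s)$, the hypothesis $\dimlocl\mu(x)\leq s$ supplies a sparse sequence $r_{n}\downarrow 0$, which after thinning I may assume satisfies $2r_{n+1}<r_{n}$, with $\mu(B(x,r_{n}))\geq r_{n}^{s+\eta}$. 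Monotonicity of $r\mapsto\mu(B(x,r))$ on the window $[r_{n},2r_{n}]$ together with the count $\#\{k\in\N\mid k^{-\alpha}\in[r_{n},2r_{n}]\}\gtrsim r_{n}^{-1/\alpha}$ produces a per-window contribution $\gtrsim r_{n}^{s+\eta-1/\alpha}\to\infty$ to $\sum_{k}\mu(B(x,k^{-\alpha}))$, forcing the series to diverge. Independence of the $\omega_{k}$ and the second Borel--Cantelli lemma then give $\Pb(x\in E_{\alpha}(\omega))=1$ for every $x\in D_{\mu}(s)$; Fubini yields $\nu(E_{\alpha}(\omega))=1$ almost surely, and the mass distribution principle delivers $\dimh E_{\alpha}(\omega)\geq F_{\mu}(s)-\ep$ a.s. Letting $\ep\to 0$ and $s\to(1/\alpha)^{-}$ concludes, using that $F_{\mu}$ is non-decreasing since $D_{\mu}(s)$ is.

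For the upper bounds, $f_{\mu}(\alpha)\leq\frac{1}{\alpha}$ follows from the cover $E_{\alpha}(\omega)\subseteq\bigcup_{k\geq N}B(\omega_{k},k^{-\alpha})$, valid for every $N$, combined with $\sum_{k\geq N}k^{-\alpha t}<\infty$ for $t>\frac{1}{\alpha}$. For $f_{\mu}(\alpha)\leq\overline{H}_{\mu}(\frac{1}{\alpha})$, fix an increasing 1-Lipschitz majorant $h\geq H_{\mu}$ and target $\dimh E_{\alpha}(\omega)\leq h(\frac{1}{\alpha})$ almost surely. Split $E_{\alpha}(\omega)=A\cup B$ with $A=E_{\alpha}(\omega)\cap D_{\mu}(\frac{1}{\alpha})$ and $B=E_{\alpha}(\omega)\setminus D_{\mu}(\frac{1}{\alpha})$. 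Then $\dimh A\leq\dimp D_{\mu}(\frac{1}{\alpha})=H_{\mu}(\frac{1}{\alpha})\leq h(\frac{1}{\alpha})$. The set $B$ consists of $x$ with $\dimlocl\mu(x)>\frac{1}{\alpha}$, so $\sum_{k}\mu(B(x,k^{-\alpha}))<\infty$, and the first Borel--Cantelli lemma combined with Fubini against $\mu$ shows $\mu(B)=0$ almost surely.

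The main obstacle is promoting the $\mu$-null statement for $B$ to a genuine Hausdorff dimension bound by $h(\frac{1}{\alpha})$; a priori, a $\mu$-null set can have dimension as large as $\dimh\spt\mu$. My plan is a scale-dependent covering along a grid $t_{j}\downarrow\frac{1}{\alpha}$: decompose $B$ into slices $B_{j}=B\cap\{x:\dimlocl\mu(x)\in(t_{j+1},t_{j}]\}$, and cover each $B_{j}$ at small scales by packing-type covers of $D_{\mu}(t_{j})$ (cost $H_{\mu}(t_{j})$ per unit of diameter), and at coarser scales by the random balls $B(\omega_{k},k^{-\alpha})$ themselves (cost $\frac{1}{\alpha}$ per unit of diameter). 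The scale threshold depends on $t_{j}-\frac{1}{\alpha}$, and the crossover loses one unit of dimension per unit of exponent gap: this is precisely the 1-Lipschitz clause in the definition of $\overline{H}_{\mu}$ and explains why the Lipschitz hull, rather than $H_{\mu}$ itself, appears in the bound. Quantifying the probabilistic control of the random balls at the scale boundary, in particular the expected number of $\omega_{k}$ falling near a given slice, is the technical heart of the argument.
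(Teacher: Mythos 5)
First, note that the paper does not prove Theorem~\ref{theorem:EPThm2} at all: it is quoted from \cite{EP2018}, so your attempt has to be judged on its own merits rather than against an in-paper argument. Your lower bound is correct and is essentially the Ekstr\"om--Persson argument that the paper re-imports as Lemma~\ref{lemma:EP7.1}: the window count $\#\{k\mid k^{-\alpha}\in[r_n,2r_n]\}\gtrsim r_n^{-1/\alpha}$ combined with $\mu(B(x,r_n))\ge r_n^{s+\eta}$ forces $\sum_k\mu(B(x,k^{-\alpha}))=\infty$ for every $x\in D_\mu(s)$, second Borel--Cantelli plus Fubini give $\nu(E_\alpha(\omega))=1$ a.s.\ for a Frostman measure $\nu$ on $D_\mu(s)$, and the mass distribution principle finishes. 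The bound $f_\mu(\alpha)\le\frac1\alpha$ is likewise fine.

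The genuine gap is the bound $f_\mu(\alpha)\le\overline H_\mu(\frac1\alpha)$, which is the only nontrivial upper bound and which you explicitly do not prove: you split off $B=E_\alpha(\omega)\setminus D_\mu(\frac1\alpha)$, observe (correctly) that $\mu(B)=0$ a.s., acknowledge that this says nothing about $\dimh B$, and then defer ``the technical heart.'' Two concrete points are missing. First, your slicing $B_j=B\cap\{x\mid\dimlocl\mu(x)\in(t_{j+1},t_j]\}$ with $t_j\downarrow\frac1\alpha$ only sees exponents in a bounded interval just above $\frac1\alpha$, whereas $\dimlocl\mu$ ranges over all of $(\frac1\alpha,\infty]$ on $B$; the grid must run upward, and the contribution of each level $y$ is controlled by $H_\mu(y'')$ for $y''$ slightly above $y$, which is why the correct bound is $\frac1\alpha+\sup_{y\ge 1/\alpha}(H_\mu(y)-y)=\overline H_\mu(\frac1\alpha)$. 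Second, and more importantly, the mechanism that makes the first-moment computation close is not identified: one needs (a) a uniformisation of the condition $\dimlocl\mu(x)\ge y$ into closed sets $G_{y,\rho}:=\{x\mid\mu(B(x,r))\le r^{y}\text{ for all }r\le\rho\}$, so that the probability that $B(\omega_k,k^{-\alpha})$ meets a piece $F$ of $G_{y,\rho}$ is bounded by $\mu(F(k^{-\alpha}))\lesssim N_{k^{-\alpha}}(F)\,k^{-\alpha y}$; and (b) the identity $\dimp=\dimMBu$, which lets one cover $D_\mu(y'')$ by countably many sets $F_i$ with $N_{k^{-\alpha}}(F_i)\lesssim k^{\alpha(H_\mu(y'')+\ep)}$. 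Feeding these into $\E\bigl[\mc H^t_\infty(E_\alpha(\omega)\cap F_i\cap G_{y,\rho})\bigr]\lesssim\sum_k k^{\alpha(H_\mu(y'')+\ep-y-t)}$ gives convergence precisely for $t>\frac1\alpha+H_\mu(y'')-y+\ep$, which is the quantitative content of your ``one unit of dimension per unit of exponent gap'' heuristic. Without (a) and (b) the crossover argument cannot be run: the defining property of $D_\mu(y)$ only gives a \emph{lower} bound on $\mu(B(x,r))$ along a sparse sequence of scales, which is the wrong direction for bounding the hitting probabilities of the random balls. As it stands the proposal proves only $\lim_{s\to\frac1\alpha-}F_\mu(s)\le f_\mu(\alpha)\le\frac1\alpha$, not the stated theorem; the missing half is carried out in \cite{EP2018}, and a closely related first-moment Hausdorff-content estimate appears in the paper's Proposition~\ref{lemma:necessaryoconditionfordimensionofintersection}.
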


In \cite{EP2018}, Ekström and Persson also obtained an upper bound for $f_{\mu}(\alpha)$ involving the upper coarse spectrum of $\mu$, but we choose not to define it here.

In this paper, we will show that Conjecture \ref{conjecture:EP} is not true for all 
$\alpha>0$, but the conjectured value nevertheless always gives a lower bound for the
dimension. Our main theorem is the following:

\begin{theorem}\label{theorem:dimhatleastconjecture}
For every $\mu \in \mc{P}(\R^d)$ {and} for every $\alpha>0$, one has 
\begin{equation}\label{bounds-fmu}
\overline{F}_{\mu}\Bigl(\frac 1\alpha\Bigr) \leq f_{\mu}(\alpha)\leq \min\Bigl\{
  \frac{1}{\alpha}, \overline{H}_{\mu}\Bigl(\frac 1\alpha\Bigr)\Bigr\},
\end{equation}
and the above bounds are optimal.
\end{theorem}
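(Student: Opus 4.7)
The upper bound $f_\mu(\alpha) \leq \min\{1/\alpha, \overline H_\mu(1/\alpha)\}$ is exactly the content of Theorem~\ref{theorem:EPThm2}, so the only new inequality to establish is the lower bound $f_\mu(\alpha) \geq \overline F_\mu(1/\alpha)$, together with the two optimality statements.

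Because $F_\mu$ is non-decreasing (as $D_\mu(t)$ grows with $t$), the increasing 1-Lipschitz hull admits the representation
\[
\overline F_\mu\Bigl(\frac{1}{\alpha}\Bigr) = \sup_{s\geq 0}\bigl(F_\mu(s)-(s-1/\alpha)_+\bigr),
\]
so it is enough to show, for each $s > 0$, that $f_\mu(\alpha) \geq F_\mu(s) - (s - 1/\alpha)_+$. For $s \leq 1/\alpha$ this is a mild sharpening of Theorem~\ref{theorem:EPThm2} at the endpoint. The genuinely new case is $s > 1/\alpha$ with $t := F_\mu(s) - (s - 1/\alpha) > 0$; here a direct Borel--Cantelli argument applied to individual points $x \in D_\mu(s)$ fails, since $\sum_k \mu(B(x, k^{-\alpha})) < \infty$, so the lower bound must come from a collective hitting argument applied to an $r$-neighbourhood of (a large subset of) $D_\mu(s)$ at each scale.

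The plan has three steps. \textbf{(i) Regular subset.} By Frostman's lemma applied to $D_\mu(s)$ combined with an Egorov-type reduction on the definition of the lower local dimension, I extract, for any fixed $\epsilon > 0$, a compact set $A \subseteq D_\mu(s)$ and a probability measure $\nu$ on $A$ such that, for all $y \in A$ and $0 < r < r_0$,
\[
\nu(B(y,r)) \leq r^{F_\mu(s) - \epsilon} \quad \text{and} \quad \mu(B(y,r)) \geq r^{s + \epsilon}.
\]
A $5r$-covering argument combining these two estimates yields $\mu(A^r) \gtrsim r^{s - F_\mu(s) + 2\epsilon}$, where $A^r$ denotes the open $r$-neighbourhood of $A$. \textbf{(ii) Collective hitting at each scale.} Group the indices into dyadic blocks $I_n = \{k : r_n \leq k^{-\alpha} < 2 r_n\}$ with $r_n = 2^{-n}$, so that $|I_n| \asymp r_n^{-1/\alpha}$. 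The expected number of $k \in I_n$ with $\omega_k \in A^{r_n}$ is $\gtrsim |I_n| \cdot \mu(A^{r_n}) \gtrsim r_n^{-(t - 2\epsilon)}$, and Bernstein-type concentration combined with independence of the $\omega_k$'s upgrades this to a pointwise lower bound of the same order, simultaneously for all large $n$, almost surely. \textbf{(iii) Cantor construction.} Along a sufficiently sparse subsequence of scales, the resulting random families of balls $B(\omega_k, k^{-\alpha})$ centred in $A^{r_n}$ produce, by a standard nested-selection argument, a compact random set $K(\omega) \subseteq E_\alpha(\omega)$ carrying a Frostman measure of exponent $\geq t - O(\epsilon)$; the mass distribution principle then yields $\dimh E_\alpha(\omega) \geq t - O(\epsilon)$ almost surely, and letting $\epsilon \to 0$ completes the lower bound.

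The principal obstacle lies in step (iii): the random balls at scale $r_n$ may cluster within $A^{r_n}$, so one has to first pre-partition $A^{r_n}$ into essentially disjoint cells of the correct size using the Frostman property of $\nu$, then pick one good ball per cell, and iterate across sufficiently widely separated scales to ensure independence between generations. For the \emph{optimality} statements, the upper bound is attained by Ahlfors regular $\mu$ (for which $\overline F_\mu \equiv \overline H_\mu$, so the two bounds in \eqref{bounds-fmu} collapse to the same value); sharpness of the lower bound is witnessed by measures $\mu$---typically inhomogeneous self-similar measures with carefully chosen weights on a Cantor-like attractor---for which $F_\mu$ exhibits a strict gap relative to $H_\mu$, and for which $f_\mu(\alpha) = \overline F_\mu(1/\alpha) < \overline H_\mu(1/\alpha)$ is verified by direct computation, thereby refuting Conjecture~\ref{conjecture:EP}.
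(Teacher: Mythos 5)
Your overall strategy is genuinely different from the paper's: you attempt a direct Cantor-type construction inside $E_{\alpha}(\omega)$ with a mass distribution principle, whereas the paper first characterises which compact sets are hit by $E_{\alpha}(\omega)$ via fixed points of the hitting operator $\Jmurz$ and then upgrades ``nonempty intersection'' to a dimension bound through a co-dimension argument with auxiliary random fractals (Lemma~\ref{lemma:JJKLSX2.3}). The reduction of the lower bound to $f_{\mu}(\alpha)\ge F_{\mu}(s)-(s-\tfrac1\alpha)_+$ for each $s$ is correct. However, your step (i) contains a gap that the rest of the argument does not survive. The set $D_{\mu}(s)$ is defined by the condition $\dimlocl\mu(x)\le s$ on the \emph{lower} local dimension, i.e.\ $\liminf_{r\to0}\log\mu(B(x,r))/\log r\le s$. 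This only yields $\mu(B(x,r))\ge r^{s+\epsilon}$ along a sequence of scales $r\to0$ \emph{depending on $x$}; it cannot be upgraded, by Egorov or otherwise, to the uniform all-scale estimate $\mu(B(y,r))\ge r^{s+\epsilon}$ for all $y\in A$ and all $0<r<r_0$ that you assert. An all-scale lower bound of this type is equivalent to controlling the \emph{upper} local dimension, and $\{x\mid\dimlocu\mu(x)\le s\}$ can be drastically smaller than $D_{\mu}(s)$ --- this is precisely the phenomenon the theorem is about. The paper's measure $\muOne$ of Section~\ref{example:dimensionishausdorffspectrum} makes this concrete: on the Cantor set $C$ one has $\dimlocl\muOne\equiv\beta s<\beta u\equiv\dimlocu\muOne$, and $\muOne(C(r))$ oscillates between roughly $r^{(\beta-1)s}$ and the much smaller $r^{(\beta-1)u}$; so for any compact $A\subseteq C$ there are infinitely many scales $r_n$ at which $\mu(A^{r_n})$ is far below the bound your step (ii) requires, and the expected number of hits per generation collapses. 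Restricting to ``good'' scales does not repair this for general $\mu$, since the good scales are point-dependent, which defeats the cell-by-cell selection in step (iii).

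The paper circumvents exactly this obstacle by never demanding that the good scales be cofinal: the relevant hypothesis is divergence of a series of the form $\sum_{k}\mu\bigl(B(x,r)\cap K(k^{-\alpha})\bigr)$, and Lemma~\ref{lemma:transferinformationfromalpha0toalpha} shows that a lower bound at infinitely many $k$ (extracted from divergence at an auxiliary exponent $\alpha_0$ with $\tfrac1{\alpha_0}>t$, where Lemma~\ref{lemma:EP7.1} applies) already forces divergence after summing over blocks $\lceil j/2\rceil\le\ell\le j$. Separately, your optimality discussion is too weak on the upper-bound side: exhibiting an Ahlfors regular $\mu$ for which the two bounds in \eqref{bounds-fmu} coincide does not show that the upper bound is attained when it \emph{strictly exceeds} the lower bound; for that the paper constructs the measure $\muTwo$ with $f_{\muTwo}(\alpha)=\overline{H}_{\muTwo}(\tfrac1\alpha)>\overline{F}_{\muTwo}(\tfrac1\alpha)$, which is also what refutes Conjecture~\ref{conjecture:EP} (your proposed lower-bound witness, with $f_{\mu}(\alpha)=\overline{F}_{\mu}(\tfrac1\alpha)$, is consistent with the conjecture and does not refute it).
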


In particular, as soon as 
$\overline{F}_{\mu}(\frac 1\alpha)=\overline{H}_{\mu}(\frac 1\alpha)$ (which is a condition
different from
$F_{\mu}(\frac 1\alpha)=H_{\mu}(\frac 1\alpha)$), the three quantities agree and 
$f_{\mu}(\alpha)=\overline{F}_{\mu}{(\frac 1\alpha)}$.
In particular, we recover 
Theorem~\ref{theorem:nminusalpha} with another approach.
The optimality of \eqref{bounds-fmu} is proved by constructing
examples of measures saturating the inequalities (see 
 Sections~\ref{example:dimensionishausdorffspectrum} and 
\ref{example:dimensionispackingspectrum}). 

\medskip

The main issues in Theorem~\ref{theorem:dimhatleastconjecture} are the lower bound and the
optimality in \eqref{bounds-fmu}.
Let us now explain our approach, which differs from the previous ones on such problems and can 
certainly be used for dealing with more general random limsup sets and hitting probability 
questions. Our method consists in explicitly characterising when a random limsup set will 
touch (almost surely) a given compact set, and is based on the following operator defined on 
compact sets of $\mathbb{R}^d$. For a set $A\subseteq \R^d$ and $r>0$, we write $A(r)$ for the 
$r$-neighbourhood of $A$, that is,
\[
A(r)\coloneqq \{x\in \R^d\mid \dist (x,A)<r\},
\]
 where $\dist$ refers to the Euclidean distance.

\begin{definition}
For a positive sequence $\underline{r}=(r_k)_{k=1}^\infty$, define the hitting operator 
$\Jmurz$ on compact sets $K\subset\R^d$ by setting
\begin{equation}\label{defJt}
\Jmurz(K):=\Bigl\{x\in \R^d\mid\sum_{k=1}^\infty \mu\bigl(B(x,r)\cap K(r_k)\bigr) 
  =\infty\text{ for every }r>0\Bigr\}.
\end{equation}
\end{definition}
We will prove (see Lemma~\ref{lem-Jmurt}) that for every compact set $K$, $\Jmurz(K)$ is a
subset of $K$ that  remains compact (which may be empty). Starting with any compact $K$, 
iterating $\Jmurz$ will ultimately yield a fixed point: 

\begin{proposition}\label{intro-ordinal}
 \label{lemma:Itildeinvariantsetfoundaftercountablymanysteps-intro}
Let $\mu \in \mc{P}(\R^d)$, let $\underline{r}$ be a positive nonincreasing sequence, and  
let $C\subseteq \spt \mu$ be compact. Define compact sets $C_0^{\gamma}$ for
each ordinal number $\gamma$ as follows: 
\begin{enumerate}
  \item[i)] Set $C_0^0:=C$.
  \item[ii)] If $C_0^{\gamma}$ has been defined, set 
             $C_0^{\gamma+1}:=\Jmurz(C_0^{\gamma})$.
  \item[iii)] If $\lambda$ is a nonzero limit ordinal and $C_0^{\gamma}$ has been defined for 
              every $\gamma<\lambda$, set 
        \[
        C_0^{\lambda}:=\bigcap_{\gamma<\lambda}C_0^{\gamma}.
        \]
\end{enumerate}        
Then there exists a countable ordinal $\beta_0$ such that 
$C_0^{\beta_0+1}=C_0^{\beta_0}$.
\end{proposition}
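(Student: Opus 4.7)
The plan is to run a transfinite induction paired with the second-countability of $\R^d$. This is the same structural argument that establishes the Cantor--Bendixson theorem for closed subsets of second-countable spaces, with the operator $\Jmurz$ playing the role of the Cantor--Bendixson derivative.

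First, by Lemma~\ref{lem-Jmurt} (which is cited right before the statement), $\Jmurz(K)$ is a compact subset of $K$ whenever $K$ is compact. A straightforward transfinite induction then shows that $C_0^\gamma$ is a well-defined compact subset of $C$ for every ordinal $\gamma$ and that the family is monotonically decreasing, i.e.\ $C_0^{\gamma'}\subseteq C_0^\gamma$ whenever $\gamma\le \gamma'$. Indeed, monotonicity is automatic at successor steps (by the containment property of $\Jmurz$) and at limit ordinals (by definition as an intersection).

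The core of the argument is the following contradiction. Suppose, for contradiction, that there is no countable ordinal $\beta_0$ with $C_0^{\beta_0+1}=C_0^{\beta_0}$; equivalently $C_0^{\gamma+1}\subsetneq C_0^\gamma$ for every $\gamma<\omega_1$. Fix a countable basis $\mathcal{B}=\{U_n\}_{n\in\N}$ for the topology of $\R^d$. For every $\gamma<\omega_1$, the set $C_0^{\gamma+1}$ is a proper closed subset of $C_0^\gamma$, so one can select a point $x_\gamma\in C_0^\gamma\setminus C_0^{\gamma+1}$ and a basic open neighbourhood $U_{n(\gamma)}\in\mathcal{B}$ of $x_\gamma$ such that
\[
U_{n(\gamma)}\cap C_0^\gamma\ne\emptyset\quad\text{and}\quad U_{n(\gamma)}\cap C_0^{\gamma+1}=\emptyset.
\]
The map $\gamma\mapsto n(\gamma)$ is then injective on $\omega_1$: if $\gamma<\gamma'<\omega_1$ had $n(\gamma)=n(\gamma')$, then $\gamma+1\le\gamma'$ would give $C_0^{\gamma'}\subseteq C_0^{\gamma+1}$, so the nonempty intersection $U_{n(\gamma')}\cap C_0^{\gamma'}$ would be contained in $U_{n(\gamma)}\cap C_0^{\gamma+1}$, contradicting the second displayed condition. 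An injection from $\omega_1$ into $\N$ being impossible, the desired countable ordinal $\beta_0$ must exist.

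There is no serious obstacle once Lemma~\ref{lem-Jmurt} is in hand: the only nontrivial input is the fact that $\Jmurz$ sends compact sets to compact subsets of themselves, which lets us view the iterates as a transfinite decreasing chain of closed sets in a second-countable space. The rest is the classical derivation-stabilisation trick, and the proof goes through uniformly without needing any specific property of the measure $\mu$ or the sequence $\underline r$ beyond what is needed to define $\Jmurz$.
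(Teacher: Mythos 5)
Your proposal is correct and follows essentially the same route as the paper: both arguments reduce the stabilisation of the transfinite chain $(C_0^\gamma)_{\gamma<\omega_1}$ to the second countability of $\R^d$, the paper via the sets $N(K)=\{n\mid U_n\cap K\ne\emptyset\}$ forming a decreasing chain of subsets of $\N$ that cannot be strictly decreasing along all of $\omega_1$, and you via the equivalent device of assigning to each strict drop a distinct basis element. The only input in either case is Lemma~\ref{lem-Jmurt}, exactly as you note.
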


A remarkable property is that this set $C_0^{\beta_0}$ actually governs the possibility of 
$E_{\underline{r}}(\omega)$ to intersect the initial deterministic compact set $C$. Indeed, 
the following theorem, which should be understood as a hitting probabilities result, 
characterises completely the event that $E_{\underline{r}}(\omega)$ touches $C$, solely in 
terms of whether $C_0^{\beta_0}$ is empty or not. This justifies the name {\em hitting} operator that we use for $\Jmurz$.

\begin{theorem}\label{lemma:ifinvariantsetemptythennointersection-intro}
Let $\mu \in \mc{P}(\R^d)$, let $\underline{r}$ be a positive nonincreasing sequence and let 
$C\subseteq \spt \mu$ be compact. Let $\beta_0$ be the ordinal number given by
Proposition~\ref{intro-ordinal}. Almost surely, one has 
$C\cap E_{\underline{r}}(\omega)\subset C_0^{\beta_0}$ and 
\[
C\cap E_{\underline{r}}(\omega)
           \begin{cases} 
                  =\emptyset&\text{ if } \, C_0^{\beta_0}=\emptyset,\\
                  \ne\emptyset&\text{ if }\,  C_0^{\beta_0}\ne\emptyset.
           \end{cases}       
\]
\end{theorem}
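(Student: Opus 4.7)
The plan is to decouple the theorem into two statements and tackle them with the two Borel--Cantelli lemmas applied to the independent sequence $(\omega_k)$. The first statement, $C\cap E_{\underline{r}}(\omega)\subseteq C_0^{\beta_0}$ almost surely, immediately handles the ``$C_0^{\beta_0}=\emptyset$'' case. The second, $C\cap E_{\underline{r}}(\omega)\ne\emptyset$ almost surely whenever $C_0^{\beta_0}\ne\emptyset$, will be proved by an explicit Cantor-type construction on the fixed point $C_0^{\beta_0}$ of $\Jmurz$.

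For the inclusion I would first establish a one-step version: for every nonempty compact $K\subseteq\spt\mu$, almost surely $E_{\underline{r}}(\omega)\cap K\subseteq\Jmurz(K)$. Fix $x\in K\setminus\Jmurz(K)$; by negating the defining property, there exists a rational $r_x>0$ with $\sum_k\mu\bigl(B(x,r_x)\cap K(r_k)\bigr)<\infty$, so the convergence Borel--Cantelli lemma guarantees that almost surely only finitely many $k$ satisfy $\omega_k\in B(x,r_x)\cap K(r_k)$. If $y\in B(x,r_x/2)\cap K$ and $k$ is large enough that $r_k<r_x/2$, the relation $\omega_k\in B(y,r_k)$ forces $\omega_k\in B(x,r_x)$ and (since $y\in K$) $\omega_k\in K(r_k)$, hence $y\notin E_{\underline{r}}(\omega)$. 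Extracting a countable subcover $\{B(x_i,r_{x_i}/2)\}_{i\in\N}$ of $K\setminus\Jmurz(K)$ by Lindelöf and intersecting the corresponding countably many almost sure events yields the one-step claim. A transfinite induction on $\gamma\le\beta_0$ then promotes this to $E_{\underline{r}}(\omega)\cap C\subseteq C_0^{\beta_0}$: the successor step is the one-step claim applied to the deterministic compact set $C_0^{\gamma}$, and the limit step is a countable intersection of almost sure events, legitimate because $\beta_0$ is a countable ordinal by Proposition~\ref{intro-ordinal}.

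For the nonemptiness when $K:=C_0^{\beta_0}$ is nonempty, I exploit that $\Jmurz(K)=K$, so $\sum_k\mu\bigl(B(x,r)\cap K(r_k)\bigr)=\infty$ for every $x\in K$ and every $r>0$. Fix a countable dense set $Y\subseteq K$. By the divergence Borel--Cantelli lemma (using independence of the $\omega_k$), the event
\[
A:=\bigcap_{y\in Y,\,r\in\Q_{>0}}\bigl\{\omega_k\in B(y,r)\cap K(r_k)\text{ for infinitely many }k\bigr\}
\]
has probability one. Working on $A$, I would build inductively indices $k_1<k_2<\cdots$ and points $y_n\in Y\cap K\cap B(\omega_{k_n},r_{k_n})$, controlling the positive slack $\sigma_n:=r_{k_n}-\dist(y_n,\omega_{k_n})$ at each stage. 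Given $(y_n,k_n,\sigma_n)$, choose a rational $r<\sigma_n/4$ and invoke $A$ at $(y_n,r)$ to find $k_{n+1}>k_n$ with $\omega_{k_{n+1}}\in B(y_n,r)\cap K(r_{k_{n+1}})$ and $r_{k_{n+1}}<r$; since $\dist(\omega_{k_{n+1}},K)<r_{k_{n+1}}$, select $y_{n+1}\in Y$ close enough to a witness in $K\cap B(\omega_{k_{n+1}},r_{k_{n+1}})$ to keep $y_{n+1}\in B(\omega_{k_{n+1}},r_{k_{n+1}})$. A direct triangle-inequality computation then gives the strict nesting $\overline{B}(\omega_{k_{n+1}},r_{k_{n+1}})\subset B(\omega_{k_n},r_{k_n})$ and $r_{k_n}\to 0$ geometrically, so $\bigcap_n\overline{B}(\omega_{k_n},r_{k_n})=\{y^*\}$. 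This $y^*$ lies in $K\subseteq C$ as the limit of $(y_n)\subseteq K$, and in $E_{\underline{r}}(\omega)$ since $\omega_{k_n}\in B(y^*,r_{k_n})$ for every $n$.

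The main obstacle is the arithmetic of slacks in the nested construction: the auxiliary radius $r$ must be strictly below the current slack $\sigma_n$ and the new radius $r_{k_{n+1}}$ strictly below $r$, to guarantee both strict nesting of the closed balls and the geometric decay $r_{k_n}\to 0$. At the same time, $y_{n+1}$ must be chosen inside $Y$ (so the hitting event $A$ remains usable at the next stage) and strictly inside $B(\omega_{k_{n+1}},r_{k_{n+1}})$ (so that $\sigma_{n+1}>0$), which is arranged by the density of $Y$ in $K$. A secondary technical point is that all the data (radii, dense set) are chosen countable-valued so that the entire construction takes place on the single almost sure event $A$, avoiding any measurability concern.
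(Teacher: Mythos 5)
Your proposal is correct, and while the first half tracks the paper's argument, the second half takes a genuinely different route. For the inclusion $C\cap E_{\underline{r}}(\omega)\subseteq C_0^{\beta_0}$, your one-step claim plus transfinite induction over countable ordinals is exactly the paper's scheme (Proposition~\ref{lemma:necessaryoconditionfordimensionofintersection} and \eqref{empty}); the paper phrases the one-step estimate as a first-moment bound on the expected $t$-dimensional Hausdorff content (which for $t=0$ degenerates to emptiness) and covers $W_n$ by finitely many balls, whereas you apply the convergence Borel--Cantelli lemma directly and use Lindel\"of --- these are the same computation in different clothing. Where you genuinely diverge is the nonemptiness half. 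The paper proves Proposition~\ref{lemma:conditionforpositivehittingprobability} by constructing an auxiliary measure $\nu$ on the fixed point, showing via a variance estimate and Chebyshev that weighted empirical measures $\mu_n^\omega$ converge weak-$\ast$ to $\nu$, and then invoking Lemma~\ref{EJJlemma} to produce a probability measure supported on $C\cap E_{\underline{V}}(\omega)$. You instead run the divergence Borel--Cantelli lemma on the countable family of events indexed by a dense set $Y\subseteq K$ and rational radii, and extract a single point of the intersection by a nested closed-ball construction; the slack bookkeeping ($r<\sigma_n/4$, $r_{k_{n+1}}<r$) does give strict nesting and geometric decay, and the limit point lies in $K$ and in every open ball $B(\omega_{k_n},r_{k_n})$, so the argument closes. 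Your version is more elementary and suffices for Theorem~\ref{lemma:ifinvariantsetemptythennointersection-intro} as stated (and indeed for every downstream use of Proposition~\ref{lemma:conditionforpositivehittingprobability} in the paper, where only nonemptiness is invoked), but it buys less in two respects: it produces only a point rather than a measure on the intersection, and the triangle-inequality nesting is specific to balls, whereas the paper's measure-theoretic construction handles arbitrary admissible sequences of open sets $\underline{V}$, which is the generality in which Section~\ref{section:hittingprob} is actually developed. One cosmetic caveat: both your argument and the paper's reduction implicitly use $r_k\to 0$ (admissibility), which the theorem's phrase ``positive nonincreasing'' does not literally guarantee, so you should state that hypothesis.
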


This actually follows from the fact that any nonempty compact set which is a fixed point for 
$\Jmurz$ will be hit by $E_{\underline{r}}(\omega)$ almost surely (see 
Propositions~\ref{lemma:conditionforpositivehittingprobability}  and 
\ref{lemma:necessaryoconditionfordimensionofintersection} below). Finally, for every $\alpha$, 
the lower bound in \eqref{bounds-fmu}
will be a consequence of a fine study of the dimensional properties of well-chosen compact 
sets $K$ included in $D_\mu(\frac{1}{\alpha})$ that are fixed points for some generalised 
versions of the operator $\Jmurz$ studied in Section \ref{section:hittingprob}.
 
Building on the study of the operator $\Jmurz$, we are able to improve some dimensional 
results concerning the hitting probabilities problem, which has already been studied in many
papers and culminated in \cite{JJKLSX2017} (see \cite{HLX} for corresponding results
concerning limsup sets determined by balls distributed along orbits of exponentially mixing
dynamical systems). There, the  authors studied the probability that limsup 
sets of random balls hit analytic sets $F$ in Ahlfors regular metric spaces, in terms of the 
packing and Hausdorff dimensions of $F$ (see Theorem \ref{theorem:JJKLSX_hittingprob}  below 
for their explicit result). Our approach allows to recover this result with a different method
and to replace the Hausdorff dimension by the modified lower box counting
dimension $\dimMBl$ for compact sets.

\begin{theorem}\label{theorem:hittingprobforregularmeasure}
Let $X:=[0,1]^d$ and denote by $\Leb$ the restriction of the $d$-di\-men\-sion\-al Lebesgue 
measure to $X$. Let $\underline{r}$ be a sequence of radii such that 
$s_2(\underline{r})\leq d$. Then, for  analytic sets $C\subseteq X$, 
\begin{enumerate}
\item[i)] if 
          $\dimp C<d- s_2(\underline{r})$, then $C\cap E_{\underline r}(\omega)=\emptyset$
          almost surely,
 \item[ii)] if 
           $\dimh C>d- s_2(\underline{r})$, then 
           $C\cap E_{\underline{r}}(\omega)\neq\emptyset$ almost surely,        
\item[iii)] if  $C$ is compact and
           $\dimMBl C>d- s_2(\underline{r})$, then 
           $C\cap E_{\underline{r}}(\omega)\neq\emptyset$ almost surely,
\item[iv)]  if 
             $\underline r=\underline r{(\alpha)}$  for some  
            $\alpha>0$ and $\dimp C>d-\frac 1\alpha$, then 
            $C\cap E_{ \alpha}(\omega)\neq \emptyset$ almost surely.
\end{enumerate}
\end{theorem}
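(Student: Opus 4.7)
Throughout write $s:=d-s_2(\underline r)$.

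For (i), the plan is a direct Borel--Cantelli estimate based on the countable-stability definition of the packing dimension. Since $\dimp C<s$, one can write $C=\bigcup_i C_i$ with $\dimBu C_i<s$ for every $i$; choosing $s_i'$ with $\dimBu C_i<s_i'<s$, a standard covering estimate gives $\Leb(C_i(r_k))\lesssim r_k^{d-s_i'}$ for small $r_k$, and $d-s_i'>s_2(\underline r)$ makes $\sum_k \Leb(C_i(r_k))<\infty$. Since $\mathbb P(\omega_k\in C_i(r_k))=\Leb(C_i(r_k))$ and the $\omega_k$ are independent, the Borel--Cantelli lemma gives $C_i\cap E_{\underline r}(\omega)=\emptyset$ almost surely, and a countable union concludes.

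For (ii) and (iii), the plan is to produce a nonempty compact fixed point of the hitting operator $\Jmurz$ inside $C$ and then to invoke Theorem~\ref{lemma:ifinvariantsetemptythennointersection-intro}. Since $\dimh K\le \dimMBl K$ for any compact $K$, and any analytic set of Hausdorff dimension exceeding $s$ contains a compact subset of Hausdorff dimension exceeding $s$, both items reduce to the following: given compact $C\subseteq X$ with $\dimMBl C>s$, find a nonempty compact fixed point of $\Jmurz$ inside $C$. I would set
\[
C':=\{x\in C\colon \dimMBl(C\cap \overline{B}(x,r))>s\text{ for all }r>0\}.
\]
The complement of $C'$ in $C$ is open in $C$, and the countable stability of $\dimMBl$ combined with a countable subcover forces $C'\neq\emptyset$ (otherwise $\dimMBl C\le s$). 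The same decomposition argument, applied inside each neighborhood of a point of $C'$, shows $\dimMBl(C'\cap\overline{B}(x,r))>s$ and hence $\dimBl(C'\cap\overline{B}(x,r))>s$ for every $x\in C'$ and $r>0$. A packing-number estimate then yields $\Leb((C'\cap\overline{B}(x,r/2))(r_k))\gtrsim r_k^{d-s'}$ for small $r_k$ with some $s'>s$, and consequently $\sum_k \Leb(B(x,r)\cap C'(r_k))=\infty$ since $d-s'<s_2(\underline r)$. Thus $C'\subseteq\Jmurz(C')$; the reverse inclusion holds for any closed set, so $C'$ is a nonempty compact fixed point.

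For (iv), the same scheme applies with $\dimMBl$ replaced by $\dimMBu=\dimp$. After reducing the analytic case to a compact subset via the Joyce--Preiss subset-selection theorem for packing dimension, I would construct $C''\subseteq C$ with $\dimp(C''\cap \overline{B}(x,r))>s$, and hence $\dimBu(C''\cap \overline{B}(x,r))>s$, for every $x\in C''$ and $r>0$. The new difficulty is that the upper box dimension is a $\limsup$, so the packing estimate $P(K,2^{-n})\gtrsim 2^{ns'}$ holds only along a subsequence of $n$'s. Here the specific sequence $r_k=k^{-\alpha}$ pays off: in each dyadic scale $[2^{-n-1},2^{-n})$ there are $\asymp 2^{n/\alpha}$ indices $k$, so at a witnessing $n$ the block contributes $\asymp 2^{n/\alpha}\cdot 2^{n(s'-d)}=2^{n(s'-s)}\to\infty$ to $\sum_k \Leb(B(x,r)\cap C''(r_k))$. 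Summing over the infinitely many witnessing $n$ forces divergence, so $C''$ is a fixed point of $\Jmurz$ (with $\underline r=\underline r(\alpha)$), and Theorem~\ref{lemma:ifinvariantsetemptythennointersection-intro} concludes.

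The main obstacles I anticipate are the localization constructions of $C'$ and $C''$, where one must verify both nonemptiness and propagation of the local dimension lower bound to every closed-ball intersection; these arguments rest crucially on the countable stability of $\dimMBl$ and $\dimMBu=\dimp$. A secondary obstacle in (iv) is the passage from an analytic set to a compact subset of comparable packing dimension, which requires the Joyce--Preiss subset-selection theorem. Once the localization is in place, the summation arguments (straightforward for general $\underline r$ in (iii), and dyadic for $\underline r=\underline r(\alpha)$ in (iv)) are routine.
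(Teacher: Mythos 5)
Your proposal is correct, but it reaches the conclusion by a genuinely different route. For i) you use a direct first Borel--Cantelli argument on a countable cover $\{C_i\}$ witnessing $\dimMBu C<d-s_2$, which bypasses the hitting-operator machinery entirely; the paper instead argues contrapositively, extracting the nonempty fixed point $C_0^{\beta_0}$ from Corollary~\ref{lemma:ifinvariantsetemptythennointersection} and showing via a Baire category argument that every relatively open piece of it has upper box dimension at least $d-s_2$. For ii)--iv) the structural difference is that you construct the fixed point of $\Jmurz$ in a single step, as the ``dimension kernel'' $C'$ of points where the localized $\dimMBl$ (resp.\ $\dimp$) stays above $d-s_2$; countable stability of the modified box dimensions gives nonemptiness of $C'$ and the propagation $\dimMBl(C'\cap\overline{B}(x,r))>d-s_2$, and the packing-number estimate $\Leb\bigl((C'\cap\overline{B}(x,\frac r2))(r_k)\bigr)\gtrsim r_k^{d-s'}$ then yields $C'=\Jmurz(C')$ directly, after which Proposition~\ref{lemma:conditionforpositivehittingprobability} applies (note that this proposition, rather than Theorem~\ref{lemma:ifinvariantsetemptythennointersection-intro}, is the correct reference, since it does not require $\underline{r}$ to be nonincreasing). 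The paper instead proves iii) and iv) contrapositively, establishing $\dimMBl\bigl(K\setminus\Jmurz(K)\bigr)\le d-s_2$ for every compact $K$ and then running a transfinite induction over the iterates $C_0^{\gamma}$. Your dyadic-block argument for iv), exploiting that $[2^{-n-1},2^{-n})$ contains $\asymp 2^{n/\alpha}$ of the radii $k^{-\alpha}$ so that a single witnessing scale already contributes $2^{n(s'-s)}$ to the divergent sum, is exactly the mechanism the paper invokes (by reference back to the computation following \eqref{equation:lowerbounforalphaobtainedfromalpha0}) to upgrade from $\dimMBl$ to $\dimp$; and your reductions from analytic to compact sets (Davies--Howroyd for Hausdorff, Joyce--Preiss for packing) match the paper's. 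What your approach buys is that, for the Lebesgue measure, the transfinite iteration and ordinals are entirely avoidable, because the localization properties of the modified box dimensions let you identify the terminal fixed point at once; what it does not give is the general-measure framework of Section~\ref{section:hittingprob}, where no such one-step kernel is available.
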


 \smallskip

The paper is organised as follows.

In Section~\ref{section:hittingprob}, we study an operator $\JmuVt$ on compact sets which is 
adapted not only to limsup sets generated by balls but more generally to limsup sets generated 
by randomly distributed open sets $\underline{V}:=(V_k)_{k=1}^\infty$. We prove  versions of 
Proposition~\ref{lemma:Itildeinvariantsetfoundaftercountablymanysteps-intro} and 
Theorem~\ref{lemma:ifinvariantsetemptythennointersection-intro} for this larger class of   
limsup sets and operators (see 
Proposition~\ref{lemma:Itildeinvariantsetfoundaftercountablymanysteps} and 
Corollary~\ref{lemma:ifinvariantsetemptythennointersection}).

The inequality \eqref{bounds-fmu} of Theorem~\ref{theorem:dimhatleastconjecture} is proved in 
Section~\ref{section:lowerbound}. Actually, we prove a slightly stronger version of it in 
Theorem~\ref{thm:dimhofintersectionwithsublevelset}.

The optimality of the bounds in 
Theorem~\ref{theorem:dimhatleastconjecture} is obtained in Section~\ref{section:examples} 
by constructing two examples of measures $\mu$ such that the Hausdorff and 
packing spectra of $\mu$ do not agree  and 
$\overline{F}_{\mu} (\frac 1\alpha )<\overline{H}_{\mu} (\frac 1\alpha )$ at some exponents 
$\alpha$, one satisfying  
$f_\mu(\alpha)=\overline{F}_{\mu} (\frac 1\alpha )$ and the other one   
$f_\mu(\alpha)=\overline{H}_{\mu} (\frac 1\alpha )$.

 Theorem~\ref{theorem:hittingprobforregularmeasure} is proved in 
Section~\ref{sec-hitting}.

Finally, Section~\ref{sec-conclusion} concludes with some remarks on the necessity of 
considering ordinals in 
Proposition~\ref{lemma:Itildeinvariantsetfoundaftercountablymanysteps-intro}, and with some 
extensions and perspectives.

\section{Hitting operators $\JmuVt$}
\label{section:hittingprob}
  
In this section, we study the following question: Given a measure $\mu \in \mc{P}(\R^d)$, a 
compact set $C\subseteq \spt \mu$ and a sequence of open sets 
$\underline{V}:=(V_k)_{k=1}^{\infty}$, and defining
\[
E_{\underline{V}}(\omega)\coloneqq\limsup_{k\to\infty} (\omega_k+V_k)  
=\bigcap_{n=1}^{\infty}\bigcup_{k=n}^{\infty}(\omega_k +V_k) ,
\]
under what conditions does the event 
$\left\{ C\cap E_{\underline{V}}(\omega)\neq\emptyset\right\}$ have probability 
$1$ and when is this probability equal to zero? This question includes the setting presented 
in the introduction, where $V_k=B(0,r_k)$.

We will study this question essentially in full generality, assuming only that the diameters 
of the open sets tend to $0$ (which is the interesting case), and that the origin is contained 
in each of the open sets (to ensure that the translates $\omega_k+V_k$ intersect the support 
of the measure).

\subsection{The hitting operators $\JmuVt$ on compact sets and their fixed points}

We write $|A|$ for the diameter of a set $A\subseteq \R^d$, and for an open set 
$V\subseteq \R^d$, let
\[
A-V\coloneqq \{a-v\mid a\in A \text{ and } v\in V \}
\]
stand  for the difference set of $A$ and $V$.
Observe that the difference set $A-B(0,r)$ coincides with the $r$-neighbourhood $A(r)$ of $A$. 

\begin{definition}
A sequence $\underline{V}=(V_k)_{k=1}^{\infty}$ of open subsets of $\R^d$ is called admissible 
if $0\in V_k$ for all $k$ and $|V_k|\to 0$ as $k\to \infty$. 
\end{definition}
 
Given $\mu\in\mc P(\R^d)$, let $X$ be a compact cube containing $\spt\mu$. Let $\mc{K}$ denote 
the space of compact subsets of $X$. The following operators, key in our proof, generalise 
the one introduced in the introduction only for balls  and for the parameter value $t=0$. 

\begin{definition}
Given an admissible sequence of open sets $\underline V$, define, for all $t\ge 0$, the hitting 
operator $ \JmuVt\colon \mc{K}\to \mc{K}$ by setting
\begin{equation}\label{defJVt}
\JmuVt(K):=\Bigl\{x\in X\mid\sum_{k=1}^\infty \mu\bigl(B(x,r)\cap (K-V_k)\bigr) |V_k|^t
  =\infty\text{ for every }r>0\Bigr\}.
\end{equation}
\end{definition}

In the case when $t=0$, $\underline{r}:=(r_k)_{k=1}^{\infty}$ and $V_k=B(0,r_k)$ for each $k$, 
the above definition reduces to \eqref{defJt}. In particular, results below will imply 
Proposition~\ref{lemma:Itildeinvariantsetfoundaftercountablymanysteps-intro} and 
Theorem~\ref{lemma:ifinvariantsetemptythennointersection-intro} as special cases.

Observe that it may happen that $\JmuVt(K)=\emptyset$. Let us justify that 
$\JmuVt\colon \mc{K}\to \mc{K}$.

\begin{lemma}
\label{lem-Jmurt}
For every $K\in \mc{K}$, $\JmuVt(K)\in\mc{K}$ and 
\begin{equation}\label{Jtinclusion}
\JmuVt(K)\subseteq K.
\end{equation}
\end{lemma}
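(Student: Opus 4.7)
The plan is to prove the two assertions of the lemma separately, dealing first with the inclusion $\JmuVt(K)\subseteq K$ (which in particular tells us $\JmuVt(K)$ is bounded) and then with the closedness of $\JmuVt(K)$ as a subset of $X$; together these give compactness.

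For the inclusion, I would argue by contradiction: fix $x\in X\setminus K$ and use the compactness of $K$ to set $\delta:=\dist(x,K)>0$. The key geometric observation is that, because $0\in V_k$ and $|V_k|$ denotes the diameter, every $v\in V_k$ satisfies $|v|=|v-0|\le |V_k|$. Consequently any $y\in K-V_k$ is of the form $y=a-v$ with $a\in K$ and $|v|\le |V_k|$, so $\dist(y,K)\le |V_k|$. Since $|V_k|\to 0$ by admissibility, one picks $r\in (0,\delta/2)$ and observes that, for all $k$ large enough to make $|V_k|<\delta/2$, a point $y\in B(x,r)\cap (K-V_k)$ would satisfy the contradictory inequality $\delta\le |x-y|+\dist(y,K)<\delta/2+\delta/2=\delta$. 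Hence $B(x,r)\cap(K-V_k)=\emptyset$ for all but finitely many $k$, so the defining series in \eqref{defJVt} reduces to a finite sum and converges trivially, which forces $x\notin \JmuVt(K)$.

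For closedness (which implies compactness since $\JmuVt(K)\subseteq K\subseteq X$), I would take a sequence $x_n\in \JmuVt(K)$ with $x_n\to x$ and verify the defining condition at $x$. Given an arbitrary $r>0$, pick $N$ large enough that $|x_N-x|<r/2$; then $B(x_N,r/2)\subseteq B(x,r)$, so monotonicity of $\mu$ gives
\[
\mu\bigl(B(x_N,r/2)\cap(K-V_k)\bigr)\le \mu\bigl(B(x,r)\cap(K-V_k)\bigr)
\]
for every $k$. Multiplying by $|V_k|^t$ and summing, the left-hand side diverges because $x_N\in\JmuVt(K)$ applied with the radius $r/2$, so the right-hand side diverges as well. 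Since $r$ was arbitrary, $x\in \JmuVt(K)$.

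I do not anticipate a genuine obstacle here; the only point that requires a little care is the geometric inequality $\dist(y,K)\le |V_k|$ for $y\in K-V_k$, where one must use explicitly that $0\in V_k$ and that $|V_k|$ is the diameter. Everything else is a direct application of the definitions and the monotonicity of $\mu$.
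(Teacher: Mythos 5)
Your proposal is correct and follows essentially the same route as the paper: the closedness argument via the ball inclusion $B(x_N,r/2)\subseteq B(x,r)$ is identical to the paper's "complement is open" argument, and your inclusion argument (points of $K-V_k$ lie within $|V_k|$ of $K$ because $0\in V_k$, combined with $|V_k|\to 0$ and compactness of $K$) is the same idea the paper phrases via the neighbourhoods $K((1+\varepsilon)r)$. No gaps.
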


\begin{proof}
First, $\JmuVt(\emptyset)=\emptyset$. Next, for a nonempty $K\in \mc {K}$, we argue as
follows: If $x\not\in \JmuVt(K)$, there exists 
$r>0$ such that 
\[
\sum_{k=1}^\infty \mu\bigl(B(x,r)\cap (K-V_k)\bigr){|V_k|}^t<\infty.
\]
Then, for every $y\in B(x,\frac r2)$, we have that $B(y,\frac r2)\subseteq B(x,r)$, hence 
\[
\sum_{k=1}^\infty \mu\bigl(B(y,\tfrac r2)\cap (K-V_k)\bigr){|V_k|}^t<\infty.
\]
This implies that $B(x,\frac r2)\subseteq \R^d\setminus \JmuVt(K)$, hence the
complement of $\JmuVt(K)$ is open. As $\JmuVt(K)\subseteq X$, it is compact.

Regarding \eqref{Jtinclusion}, the condition 
\begin{equation}
\label{cond-mu}
\sum_{k=1}^\infty\mu\bigl(B(x,r)\cap (K-V_k)\bigr){|V_k|}^t=\infty
\end{equation}
implies that $B(x,r)\cap (K-V_k)\neq\emptyset$ for infinitely many $k$ (and, thus, for all 
$k$ since the sets $V_k\ni 0$ are open  and ${|V_k|}\to 0$), hence $x\in K((1+\varepsilon)r)$ for all $\varepsilon>0$ (since ${|V_k|}\to 0$). If this holds for all $r>0$, 
then $x\in K$ since $K$ is compact. 
\end{proof}

It is also true that if 
\[
\sum_{k=1}^\infty\mu(K-V_k){|V_k|}^t=\infty
\]
then $\JmuVt(K)\neq \emptyset$, but we do not need to explicitly use this fact.

As explained in the introduction, we iterate the operator $\JmuVt$ to get a fixed point.

\begin{proposition}\label{lemma:Itildeinvariantsetfoundaftercountablymanysteps}
Let $\mu \in \mc{P}(\R^d)$, let $\underline{V}$ be an admissible sequence of open sets and let 
$C\subseteq \spt \mu$ be compact. For all $t\ge 0$, define compact sets $C_t^{\gamma}$ for
each ordinal number $\gamma$ as follows: 
\begin{enumerate}
  \item[i)] Set $C_t^0:=C$.
  \item[ii)] If $C_t^{\gamma}$ has been defined, set 
             $C_t^{\gamma+1}:=\JmuVt(C_t^{\gamma})$.
  \item[iii)] If $\lambda$ is a nonzero limit ordinal and $C_t^{\gamma}$ has been defined for 
              every $\gamma<\lambda$, set 
        \[
        C_t^{\lambda}:=\bigcap_{\gamma<\lambda}C^{\gamma}_t.
        \]
\end{enumerate}        
Then there exists a countable ordinal $\beta_t$ for which $C_t^{\beta_t}$ is a fixed point of 
$\JmuVt$, i.e.,  $C_t^{\beta_t+1}=C_t^{\beta_t}$.
\end{proposition}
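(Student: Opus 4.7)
The plan is to adapt the classical Cantor--Bendixson derivation argument, exploiting second countability of the ambient cube $X$ together with the fact (established in Lemma~\ref{lem-Jmurt}) that $\JmuVt$ sends $\mc{K}$ into itself and satisfies $\JmuVt(K)\subseteq K$. No quantitative estimate on $\mu$ or on the diameters $|V_k|$ will be needed; the argument is purely topological.

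First I would check, by transfinite induction on $\gamma$, that every $C_t^{\gamma}$ is a compact subset of $X$ and that the transfinite sequence $(C_t^{\gamma})_{\gamma}$ is decreasing. At successor stages this follows from \eqref{Jtinclusion} and from Lemma~\ref{lem-Jmurt}; at nonzero limit ordinals $\lambda$, the set $C_t^{\lambda}=\bigcap_{\gamma<\lambda}C_t^{\gamma}$ is automatically compact as an intersection of compact sets, and is contained in every $C_t^{\gamma}$ with $\gamma<\lambda$.

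Next, I would argue by contradiction. Suppose that \emph{no} countable ordinal $\beta_t$ is a fixed point, i.e.\ that the strict inclusion $C_t^{\gamma+1}\subsetneq C_t^{\gamma}$ holds for every countable ordinal $\gamma$. Fix a countable basis $(U_n)_{n\in\N}$ for the topology of $X$. For each countable $\gamma$, choose a point $x_\gamma\in C_t^{\gamma}\setminus C_t^{\gamma+1}$; since $C_t^{\gamma+1}$ is closed, I can pick an index $n(\gamma)\in\N$ with
\[
x_\gamma\in U_{n(\gamma)}\qquad\text{and}\qquad U_{n(\gamma)}\cap C_t^{\gamma+1}=\emptyset.
\]
The key observation is that the map $\gamma\mapsto n(\gamma)$ is injective on the uncountable set of countable ordinals. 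Indeed, take $\gamma_1<\gamma_2$; since the chain is decreasing and $\gamma_2\ge\gamma_1+1$, we have $x_{\gamma_2}\in C_t^{\gamma_2}\subseteq C_t^{\gamma_1+1}$, whereas $U_{n(\gamma_1)}\cap C_t^{\gamma_1+1}=\emptyset$, so $x_{\gamma_2}\notin U_{n(\gamma_1)}$. Since $x_{\gamma_2}\in U_{n(\gamma_2)}$, this forces $n(\gamma_1)\ne n(\gamma_2)$. An injection from the uncountable set of countable ordinals into $\N$ is impossible, giving the desired contradiction and thereby producing a countable $\beta_t$ with $C_t^{\beta_t+1}=C_t^{\beta_t}$.

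There is no real obstacle beyond bookkeeping: the only points one must be careful about are (i) that the transfinite construction genuinely produces compact sets at every stage (so that the separating basic open set $U_{n(\gamma)}$ exists), which is precisely the content of Lemma~\ref{lem-Jmurt}, and (ii) that the injectivity argument works uniformly in $\gamma$ because the chain is decreasing, not merely nonincreasing at successor steps. In fact, the argument is exactly the topological heart of Cantor--Bendixson, and it shows more generally that any transfinite sequence of closed subsets of a second countable space that is decreasing and strictly decreasing at every successor stage must terminate before the first uncountable ordinal.
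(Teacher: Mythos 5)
Your argument is correct and is essentially the same as the paper's proof: both are the Cantor--Bendixson style argument that uses a countable basis to witness each strict drop $C_t^{\gamma+1}\subsetneq C_t^{\gamma}$ by a basic open set meeting $C_t^{\gamma}$ but not $C_t^{\gamma+1}$, and then derives a contradiction from an injection of the uncountably many countable ordinals into $\N$. The paper phrases this via the sets $N(K)=\{n\mid U_n\cap K\ne\emptyset\}$ rather than via chosen points $x_\gamma$, but the content is identical.
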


\begin{remark}
We will prove in Section~\ref{sec-ordinals} that it is necessary to consider all 
 countable ordinals to reach a compact set that is a fixed point.
\end{remark}

\begin{proof}
The proof is similar to that of the fact that every closed subset of $\R^d$ is a union 
of a perfect set and a countable set using ordinal numbers.    
Let $\{U_n\}_{n=1}^{\infty}$ be a countable basis for the topology of $\R^d$ and, for compact 
sets $K\subseteq \R^d$, define 
\[
N(K)\coloneqq \{ n\in \N\mid U_n\cap K\neq \emptyset \}.
\]
Then the complement of a compact set $K$ can be written as 
$\bigcup_{n\in\N\setminus N(K)}U_n$. This implies that if $K$ and $K'$ are two compact sets
such that $K\subsetneq K'$, then $N(K)\subsetneq N(K')$. Fix $t\ge 0$. Since the sets
$C_t^{\gamma}$ are nested by \eqref{Jtinclusion} and compact, we have that 
$N(C_t^{\gamma+1})\subseteq N(C_t^{\gamma})$ for each ordinal number $\gamma$. Since 
$N(C_t^{0})\subseteq \N$ and the set of all countable ordinals is uncountable, the strict
inclusion $N(C_t^{\gamma+1})\subsetneq N(C_t^{\gamma})$ cannot hold for every countable
ordinal $\gamma$. Hence, there is a countable $\beta_t$ such that 
$N(C_t^{\beta_t+1})=N(C_t^{\beta_t})$, which yields the claim.
\end{proof}

\subsection{Fixed points of $\JmuVz$ intersect $E_{\underline{V}}(\omega)$}

The following proposition shows that those compact sets that are fixed points of 
the operator $\JmuVz$ intersect the random covering set $E_{\underline{V}}(\omega)$ almost
surely. In the next subsection, we will see that this condition
characterises which kind of compact sets are hit by the random covering sets almost surely.

\begin{proposition}\label{lemma:conditionforpositivehittingprobability}
Let $\mu \in \mc{P}(\R^d)$ and let $\underline{V}$ be an admissible 
sequence of open sets. Suppose that $C\subseteq \spt \mu$ is a nonempty compact set such that  
$C=\JmuVz(C)$. Then $C\cap E_{\underline{V}}(\omega)\neq \emptyset$ almost surely.
\end{proposition}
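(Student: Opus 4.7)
The plan is to produce almost surely a point $x\in C$ covered by $\omega_k+V_k$ for infinitely many $k$, via an adaptive nested construction driven by the fixed-point hypothesis at every scale. First, I would reinterpret the hitting operator probabilistically: since $\omega_k\in C-V_k$ iff $(\omega_k+V_k)\cap C\ne\emptyset$, one has
\[
\mu\bigl(B(y,r)\cap (C - V_k)\bigr) = \mathbb{P}\bigl(\omega_k \in B(y,r)\text{ and }(\omega_k+V_k)\cap C\ne\emptyset\bigr),
\]
so the hypothesis $C=\JmuVz(C)$ says exactly that for every $y\in C$ and every $r>0$ these probabilities sum to $\infty$ over $k$. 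Because the $\omega_k$'s are i.i.d., L\'evy's conditional Borel--Cantelli lemma then yields the following key property: given a stopping time $m$ and any $\mathcal{F}_m$-measurable pair $(y,r)$ with $y\in C$, almost surely infinitely many $k>m$ satisfy $\omega_k\in B(y,r)\cap (C-V_k)$, where $\mathcal{F}_m=\sigma(\omega_1,\ldots,\omega_m)$.

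Using this, I would inductively produce random triples $(k_j,y_j,\varepsilon_j)_{j\geq 0}$ with $k_0<k_1<\cdots$, $y_j\in C$, and $\varepsilon_{j+1}\leq\varepsilon_j/2$, satisfying, for $j\geq 1$: (i) $y_j \in (\omega_{k_j}+V_{k_j})\cap C$; (ii) $\overline{B(y_j,2\varepsilon_j)}\subset \omega_{k_j}+V_{k_j}$; (iii) $y_{j+1}\in B(y_j,\varepsilon_j)$. Start from any $y_0\in C$, $\varepsilon_0>0$, $k_0:=0$. At the inductive step, $y_j$ and $\varepsilon_j$ are $\mathcal{F}_{k_j}$-measurable and the fixed-point property at $y_j$ gives $\sum_{k>k_j}\mu(B(y_j,\varepsilon_j/2)\cap (C-V_k))=\infty$; since $|V_k|\to 0$, restricting to $k$ with $|V_k|<\varepsilon_j/2$ preserves the divergence, so the key property above produces almost surely a smallest stopping time $k_{j+1}>k_j$ with $\omega_{k_{j+1}}\in B(y_j,\varepsilon_j/2)\cap (C-V_{k_{j+1}})$ and $|V_{k_{j+1}}|<\varepsilon_j/2$. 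I then pick $y_{j+1}$ in the nonempty relatively open set $(\omega_{k_{j+1}}+V_{k_{j+1}})\cap C$ via a measurable rule (e.g.\ the first element of a fixed countable dense subset of $X$ lying in it); the triangle inequality yields $|y_{j+1}-y_j|\leq|V_{k_{j+1}}|+\varepsilon_j/2<\varepsilon_j$, which is (iii), and the openness of $\omega_{k_{j+1}}+V_{k_{j+1}}$ around $y_{j+1}$ allows me to take $\varepsilon_{j+1}\leq\varepsilon_j/2$ small enough to enforce (ii) at level $j+1$.

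The limit $x:=\lim_j y_j$ exists because $(y_j)_j$ is Cauchy by the geometric decay of $\varepsilon_j$, and $x\in C$ by closedness of $C$. The telescoping bound $|x-y_j|\leq\sum_{i\geq j}\varepsilon_i\leq 2\varepsilon_j$, combined with (ii), gives $x\in \overline{B(y_j,2\varepsilon_j)}\subset \omega_{k_j}+V_{k_j}$ for every $j\geq 1$, so $x\in E_{\underline V}(\omega)$ and $C\cap E_{\underline V}(\omega)\ne\emptyset$. The main technical obstacle is probabilistic rather than geometric: one must ensure the entire adaptive construction lives on a single event of full probability. This is handled by applying L\'evy's conditional Borel--Cantelli lemma inductively relative to the filtration $(\mathcal{F}_{k_j})_{j\geq 0}$ of stopping-time $\sigma$-algebras, choosing the selections measurably at each stage, and intersecting the resulting countably many full-probability events.
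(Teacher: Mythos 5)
Your proof is correct, but it follows a genuinely different route from the paper's. The paper proves the proposition by a measure-theoretic argument: it fixes an auxiliary measure $\nu$ with $\spt\nu=C$, uses a translated dyadic decomposition and a second-moment (variance) estimate over deterministic blocks $M_n\le k\le N_n$ to show that the random empirical measures $\mu_n^\omega$ converge weak-$\ast$ to $\nu$ almost surely, and then invokes an extraction lemma (Lemma~\ref{EJJlemma}, from \cite{EJJ2020}) to produce a probability measure supported on $C\cap E_{\underline{V}}(\omega)$. You instead run a nested-ball construction: the fixed-point hypothesis, read as divergence of $\sum_k\mu(B(y,r)\cap(C-V_k))$ for every $y\in C$ and $r>0$, combined with the conditional second Borel--Cantelli lemma along the stopping-time filtration, lets you adaptively select indices $k_1<k_2<\cdots$ and points $y_j\in(\omega_{k_j}+V_{k_j})\cap C$ with $\overline{B(y_j,2\varepsilon_j)}\subset\omega_{k_j}+V_{k_j}$ and $\varepsilon_{j+1}\le\varepsilon_j/2$, so that the limit $x=\lim_j y_j$ lies in $C$ and in every $\omega_{k_j}+V_{k_j}$. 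The geometry is sound (in particular your use of $0\in V_k$ and $|V_k|\to 0$ to get $|y_{j+1}-y_j|<\varepsilon_j$ is exactly right), and the probabilistic bookkeeping you describe -- decomposing over the values of the stopping time, using that the post-$k_j$ variables are i.i.d.\ and independent of $\mathcal{F}_{k_j}$, and intersecting countably many full-probability events -- is the standard and correct way to make the adaptive construction rigorous. Your approach is more elementary and self-contained (no auxiliary measure, no weak-$\ast$ limits, no external lemma), at the cost of the stopping-time machinery; the paper's approach avoids adapted selections entirely by working with deterministic blocks, and it yields slightly more, namely a probability measure carried by $C\cap E_{\underline{V}}(\omega)$ rather than a single point, which is the form in which the result is reused elsewhere in their framework. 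One small slip: to choose $y_{j+1}$ measurably you should take the first element of a fixed countable dense subset of $C$ (not of $X$) lying in the relatively open set $(\omega_{k_{j+1}}+V_{k_{j+1}})\cap C$, since a dense subset of the ambient cube need not meet $C$ at all; this is cosmetic and does not affect the argument.
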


\begin{proof}
Let us rewrite the fact that $C=\JmuVz(C)$ as 
\begin{equation}\label{equation:sufficientcondiitonfornonemptyintersection}
C=\Bigl\{x\in C\mid\sum_{k=1}^\infty\mu(B(x,r)\cap (C-V_k))=\infty \text{ for all }r>0\Bigr\}.
\end{equation}
By scaling, we may assume that $\spt\mu\subset X:=[1/4,3/4]^d$. Since $C$ is compact, there 
exists $\nu\in \mc{P}(X)$ such that $\spt \nu=C \subset \spt \mu$ (for example,
one can take $\nu:=\sum_{n=1}^\infty 2^{-n}\delta_{c_n}$, where $\{c_n\mid n\in \N\}$ is a
countable dense subset of $C$). We denote the boundary of a set $A$
by $\partial A$. Let $\mc{D}:=\bigcup_{n=0}^{\infty}\mc{D}_n$ be
a translated collection of half-open dyadic cubes where the translation is
chosen such that $\nu(\partial D)=0$ for every $D\in \mc{D}$. Note that there are at most
countably many hyperplanes parallel to 
the axes with positive $\nu$-measure, hence such a translation exists and, further, we may 
assume that $X\subset D\in\mc{D}_0$.
Thus, for each $D\in \mc{D}_n$ with $\nu(D)>0$, $D$ contains some ball with centre in $C$. By 
\eqref{equation:sufficientcondiitonfornonemptyintersection}, 
\[
\sum_{k=1}^\infty \mu(D\cap (C-V_k))=\infty.
\]
Since, for each $n\in\N$, there are only finitely many $D\in \mc{D}_n$, we can choose
sequences $(M_n)_{n=1}^\infty$ and $(N_n)_{n=1}^\infty$ of natural numbers in such a way that
$M_n<N_n<M_{n+1}$ for every $n\in\N$ and
\[
\sum_{k=M_n}^{N_n}\mu(D\cap(C-V_k))\geq 2^n
\]
for every $D\in \mc{D}_n$ with $\nu(D)>0$.

Define, for each $n\in \N$, a measure
\[
\nu_n\coloneqq \sum_{D\in \mc{D}_n}\nu(D)a_{n,D}\sum_{k=M_n}^{N_n}\mu_{|_{D\cap (C-V_k)}},
\]
where 
\[
0<a_{n,D}\coloneqq \left(\sum_{k=M_n}^{N_n}\mu(D\cap (C-V_k)) \right)^{-1}\leq 2^{-n}
\]
if $\nu(D)>0$ and $a_{n,D}:=0$ otherwise. Notice that $\nu_n(D)=\nu(D)$ for every 
$D\in \mc{D}_n$, hence $\nu_n\to \nu$ as $n\to \infty$ in the weak-$\ast$ topology.

We write $\int h\, d\nu$ or $\nu(h)$ for the integral of a measurable function $h$ with 
respect to a measure $\nu$.

\begin{lemma}\label{sublemma}
For $\omega \in \Omega$ and $n\in \N$, define a measure $\mu_n^{\omega}$ by
\[
\mu_n^{\omega}\coloneqq \sum_{D\in \mc{D}_n}\nu(D)a_{n,D}\sum_{k=M_n}^{N_n} 
  \chi_{D\cap (C-V_k)}(\omega_k )\delta_{\omega_k}.
\]
Then $\mu_n^{\omega}\to \nu$ almost surely in the weak-$\ast$ topology.
\end{lemma}

\begin{proof}[Proof of Lemma \ref{sublemma}] Let $h\in C(X)$.
Then 
\begin{align*}
\mathbb{E}(\mu_n^{\omega}(h))&=\sum_{D\in \mc{D}_n}\nu(D)a_{n,D}\sum_{k=M_n}^{N_n} 
  \int_{D\cap (C-V_k)}h(\omega_k)\, d\mu(\omega_k) = \int h\, d\nu_n\xrightarrow[n\to\infty]{}
  \int h\,d\nu.
\end{align*}
We also estimate the variance of $\mu_n^{\omega}(h)$:
\begin{align*}
\mathrm{Var} (\mu_n^{\omega}(h))&
   =\mathrm{Var} \left( \sum_{D\in \mc{D}_n}\nu(D)a_{n,D}\sum_{k=M_n}^{N_n} 
    \chi_{D\cap (C-V_k)}(\omega_k ) h(\omega_k) \right)\\
 &=\sum_{k=M_n}^{N_n}\mathrm{Var} \left(  \sum_{D\in \mc{D}_n}\nu(D)a_{n,D}\,
   \chi_{D\cap (C-V_k)}(\omega_k ) h(\omega_k) \right)\\
 &\leq \Vert h \Vert_{\infty}^2\sum_{k=M_n}^{N_n}\E \left(\left(\sum_{D\in \mc{D}_n}
   \nu(D)a_{n,D}\,\chi_{D\cap(C-V_k)}(\omega_k )  \right)^2  \right)\\
 &=\Vert h \Vert_{\infty}^2\sum_{k=M_n}^{N_n} \sum_{D'\in \mc{D}_n}\int_{D'}\left(\sum_{D\in 
   \mc{D}_n}\nu(D)a_{n,D}\,\chi_{D\cap (C-V_k)}(\omega_k )  \right)^2\, d\mu(\omega_k)\\
 &=\Vert h \Vert_{\infty}^2\sum_{k=M_n}^{N_n} \sum_{D'\in \mc{D}_n}\int_{D'}
   \bigl(\nu(D')a_{n,D'}\,\chi_{D'\cap (C-V_k)}(\omega_k )\bigr)^2\, d\mu(\omega_k)\\
 &=\Vert h \Vert_{\infty}^2\sum_{k=M_n}^{N_n} \sum_{D'\in \mc{D}_n}\nu(D')^2a_{n,D'}^2
   \mu(D'\cap (C-V_k))\\
 &=\Vert h \Vert_{\infty}^2\sum_{D'\in \mc{D}_n}\nu(D')^2a_{n,D'}
   \leq \Vert h \Vert_{\infty}^2 2^{-n}.
\end{align*}
For every $\varepsilon>0$, recalling that $\mathbb{E}(\mu_n^{\omega}(h)) =\nu_n(h)$, we obtain 
by the Chebyshev's inequality that
\[
\sum_{n=1}^\infty\Pb (|\mu_n^{\omega}(h)-\nu_n(h) |\geq \varepsilon)\leq \sum_{n=1}^\infty 
\varepsilon^{-2}\mathrm{Var}(\mu_n^{\omega}(h))<\infty.
\]
Thus, the Borel--Cantelli lemma implies that 
\[
\Pb (\limsup_{n\to \infty}|\mu_n^{\omega}(h)-\nu_n(h)|\geq \varepsilon)=0.
\]
Since $\lim_{n\to\infty}\nu_n(h)=\nu(h)$, we obtain that 
$\lim_{n\to\infty}\mu_n^{\omega}(h)=\nu(h)$ almost surely and, by the separability of $C(X)$,
$\mu_n^{\omega}\xrightarrow[n\to\infty]{\text{weak-}\ast} \nu$ almost surely.
\end{proof}

With Lemma~\ref{sublemma} established, we can now complete the proof of 
Proposition~\ref{lemma:conditionforpositivehittingprobability}. To show that the intersection 
$C\cap E_{\underline{V}}(\omega)$ is nonempty, we are going to build a probability measure 
$\eta$ whose support is included in this intersection. This will be achieved thanks to the     
following lemma, which is a simplified version of  \cite[Lemma~1.4]{EJJ2020} 
and whose demonstration is included in its proof (recall that the support of a continuous
function $\varphi$ is $\spt\varphi:=\overline{\{x{\mid}\varphi(x)\neq 0\}}$).

\begin{lemma}\label{EJJlemma}   
Let $\nu$ be a finite Borel measure on a compact metric space $X$ and let
$(\varphi_n)_{n=1}^\infty$ be a sequence of nonnegative continuous functions on
$X$ with the property that $\lim_{n\to \infty}\varphi_n\,d\nu=\nu$ in the
weak-$\ast$ topology. Then there exists $\eta\in\mc{P}(X)$ such that  
\[
\spt\eta\subset \spt\nu\cap(\limsup_{n\to\infty}\spt\varphi_n).
\]
\end{lemma}

We are going to build positive continuous functions supported inside $\omega_k+V_k$  when 
$\omega_k\in C-V_k$. To this end, for each $k\in\N$ and  
$\omega=(\omega_n)_{n=1}^\infty\in \Omega$, set 
$\tilde\psi_k^{\omega} = \psi_k^{\omega}\equiv 0$ (the zero function) when 
$\omega_k \not\in C-V_k$.
Otherwise, when $\omega_k\in C-V_k$, the open set $\omega_k+V_k$ contains some ball with
centre in $C$, hence $\nu(\omega_k+V_k)>0$. For each $k\in\N$, for every
$\omega\in \Omega$ such that $\omega_k \in C-V_k$  and for all $r>0$, define
$(\omega_k+V_k)[r]:=\{x\in\omega_k+V_k\mid\dist\bigl(x,(\omega_k+V_k)^c\bigr)\ge r\}$,
$r(\omega_k):=\max\{r>0\mid\nu\bigl((\omega_k+V_k)[r]\bigr)\ge\frac 12\nu(\omega_k+V_k)\}$ and
$U_k^\omega:=(\omega_k+V_k)[r(\omega_k)]$. Here the superscript $c$ refers to the complement.
Then, let $\Tilde{\psi}_k^{\omega }$ be a continuous function such that
\[
\chi_{U_k^{\omega}}\leq \Tilde{\psi}_k^{\omega}\leq \chi_{\omega_k+V_k}
\]
and $(\omega,x) \in \Omega\times X \mapsto \Tilde{\psi}_k^{\omega}(x)$ is a Borel map. 
 The existence of such a map can be checked along the same lines as \cite[(4.12)]
{JJMS2025}, where balls instead of open sets $V_k$ are considered. 
Then, for every $\omega_k\in C-V_k$, the probability
measure $\psi_k^{\omega}\, d\nu$ is supported on $C\cap (\omega_k+V_k)$, where 
\[
\psi_k^{\omega}(x)\coloneqq \frac{\Tilde{\psi}_k^{\omega}(x)}
{\int_{\omega_k+V_k}\Tilde{\psi}_k^{\omega}\, d\nu}.
\]
Finally, let 
\[
\varphi_n^{\omega}\coloneqq \sum_{D\in \mc{D}_n}\nu(D)a_{n,D}\sum_{k=M_n}^{N_n} 
\chi_{D\cap (C-V_k)}(\omega_k ) \psi_k^{\omega}.
\] 
Since $\mu_n^{\omega}\xrightarrow[n\to\infty]{\text{weak-}\ast} \nu$ almost surely and 
$\lim_{k\to\infty}{|V_k|}=0$, we obtain that 
$\varphi_n^{\omega}\,d\nu\xrightarrow[n\to\infty]{\text{weak-}\ast} \nu$ almost surely. 
Observe also that $\spt \varphi_n^\omega \subset  \bigcup_{k=M_n}^{N_n} (\omega_k+V_k)$, so  
$\limsup_{n\to \infty} \spt \varphi_n^\omega \subset E_{\underline{V}}(\omega)$.

By Lemma~\ref{EJJlemma}, there exists almost surely a probability measure supported on 
$C\cap (\limsup_{n\to \infty}\spt \varphi_n^{\omega})$ and, in particular, 
$C\cap E_{\underline{V}}(\omega)\neq \emptyset$ almost surely. This completes 
the proof of Proposition~\ref{lemma:conditionforpositivehittingprobability}.
\end{proof}

\subsection{Dimension estimates for intersection with compact sets}
 
Let us recall that, for $t\geq0$, the $t$-dimensional Hausdorff content of a set $E$ is
\[
\mc{H}_{\infty}^t(E)= \inf\Big\{ \sum_{i\in I} | U_i|^t \mid E \subset 
\bigcup_{i\in I} U_i \Big\},
\]
where the infimum is taken over all possible coverings $\{U_i\}_{i\in I}$ of $E$. 
The following proposition is our main tool in
the proof of {Theorem~\ref{theorem:dimhatleastconjecture} and its refined version  
Theorem~\ref{thm:dimhofintersectionwithsublevelset}}. It states in 
particular that, given a compact set $C$, the covering set 
$E_{\underline{V}}(\omega)$ may intersect $C$ only inside $C_0^{\beta_0}$.

\begin{proposition}\label{lemma:necessaryoconditionfordimensionofintersection}
Let $\mu \in \mc{P}(\R^d)$, let $\underline{V}$ be an admissible sequence of open sets and let
$C\subseteq \spt \mu$ be compact. For every $t\geq 0$, let $\beta_t$ be the ordinal number given 
by Proposition~\ref{lemma:Itildeinvariantsetfoundaftercountablymanysteps}. Then, for every
$t\ge 0$, 
\[
\dimh (E_{\underline{V}}(\omega)\cap (C\setminus C_t^{\beta_t})) \leq t
\]
almost surely. Furthermore, 
\[
E_{\underline{V}}(\omega)\cap (C\setminus C_0^{\beta_0})=\emptyset
\]
almost surely.
\end{proposition}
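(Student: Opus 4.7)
My plan is to stratify $C\setminus C_t^{\beta_t}$ by the ordinal level at which each point drops out of the iteration defining $C_t^{\beta_t}$, and then, at each level, to bound the expected Hausdorff $t$-content of the random trace $E_{\underline V}(\omega)\cap(C_t^\gamma\setminus C_t^{\gamma+1})$ using the convergent series that witnesses the escape of points from $\JmuVt(C_t^\gamma)$.

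First I would verify the following ordinal bookkeeping: for every $x\in C\setminus C_t^{\beta_t}$, the least ordinal $\gamma(x)\le\beta_t$ with $x\notin C_t^{\gamma(x)}$ is a successor, because if it were a nonzero limit ordinal, the defining intersection $C_t^{\gamma(x)}=\bigcap_{\gamma<\gamma(x)}C_t^\gamma$ would still contain $x$, contradicting minimality. Writing $\gamma(x)=\gamma'+1$ yields
\[
C\setminus C_t^{\beta_t}=\bigcup_{\gamma<\beta_t}(C_t^\gamma\setminus C_t^{\gamma+1}),
\]
a countable union since $\beta_t$ is countable by Proposition~\ref{lemma:Itildeinvariantsetfoundaftercountablymanysteps}. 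By the countable stability of $\dimh$, it is enough to bound $\dimh(E_{\underline V}(\omega)\cap(C_t^\gamma\setminus C_t^{\gamma+1}))$ for each fixed $\gamma<\beta_t$.

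Fix such a $\gamma$. For every $x\in C_t^\gamma\setminus C_t^{\gamma+1}$, since $x\notin\JmuVt(C_t^\gamma)$, there is $r_x>0$ with $\sum_{k=1}^\infty\mu(B(x,r_x)\cap(C_t^\gamma-V_k))|V_k|^t<\infty$. I extract a countable subcover $(B(x_i,r_{x_i}/2))_{i\in\N}$ of $C_t^\gamma\setminus C_t^{\gamma+1}$ by Lindel\"of. The key deterministic step is: if $y\in B(x_i,r_{x_i}/2)\cap(C_t^\gamma\setminus C_t^{\gamma+1})\cap E_{\underline V}(\omega)$, then $y\in\omega_k+V_k$ for infinitely many $k$; as soon as $|V_k|<r_{x_i}/2$, writing $\omega_k=y-v$ with $v\in V_k$ gives $\omega_k\in B(x_i,r_{x_i})$ and $\omega_k\in C_t^\gamma-V_k$ (because $y\in C_t^\gamma$). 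Consequently, for every $N\in\N$,
\[
B(x_i,r_{x_i}/2)\cap(C_t^\gamma\setminus C_t^{\gamma+1})\cap E_{\underline V}(\omega)\subset \bigcup_{\substack{k\ge N\\ \omega_k\in B(x_i,r_{x_i})\cap(C_t^\gamma-V_k)}}(\omega_k+V_k).
\]
Taking Hausdorff $t$-content of both sides and then expectation gives, by Fubini,
\[
\E\bigl[\mc{H}_\infty^t\bigl(B(x_i,r_{x_i}/2)\cap(C_t^\gamma\setminus C_t^{\gamma+1})\cap E_{\underline V}(\omega)\bigr)\bigr]\le \sum_{k\ge N}\mu\bigl(B(x_i,r_{x_i})\cap(C_t^\gamma-V_k)\bigr)|V_k|^t,
\]
which tends to $0$ as $N\to\infty$ by the convergence of the escape series. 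The $t$-content therefore vanishes almost surely, and taking countable unions over $i$ and $\gamma<\beta_t$ yields the dimension estimate.

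The emptiness statement for $t=0$ uses the same inclusion, but now the tail is the tail of $\sum_k\mu(B(x_i,r_{x_i})\cap(C_0^\gamma-V_k))<\infty$, so Borel--Cantelli ensures that almost surely only finitely many $k$ place $\omega_k$ inside $B(x_i,r_{x_i})\cap(C_0^\gamma-V_k)$; for $N$ large enough the right-hand side of the inclusion is empty, hence so is the left-hand side, and a countable union over $i$ and $\gamma<\beta_0$ concludes. The main obstacle is the deterministic inclusion: one must carefully verify that a limsup point $y\in C_t^\gamma$ forces each covering witness $\omega_k$ into the very set $B(x_i,r_{x_i})\cap(C_t^\gamma-V_k)$ whose $\mu$-measure controls the escape series at $x_i$. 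Once this bookkeeping is pinned down, the probabilistic half reduces to a transparent first-moment estimate.
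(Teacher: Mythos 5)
Your proof is correct and follows essentially the same route as the paper: the heart of both arguments is the first-moment bound on the Hausdorff $t$-content of $E_{\underline{V}}(\omega)$ near a point where the escape series converges, using $\mc{H}_{\infty}^t(\omega_k+V_k)\le |V_k|^t$ and the observation that a limsup point $y$ in the fixed compact set forces $\omega_k\in B(x_i,r_{x_i})\cap(C_t^{\gamma}-V_k)$ once $|V_k|<r_{x_i}/2$. The only difference is organizational: you stratify $C\setminus C_t^{\beta_t}$ directly as the countable union $\bigcup_{\gamma<\beta_t}(C_t^{\gamma}\setminus C_t^{\gamma+1})$ and invoke countable stability of $\dimh$, whereas the paper proves the one-step estimate for an arbitrary compact $K$ and then runs a transfinite induction over countable ordinals; these are equivalent, and both rest on $\beta_t$ being countable.
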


\begin{proof}
Fix $t\ge 0$. We will first show that, for any compact set $K$, almost surely
\begin{align}
\dimh\bigl(E_{\underline{V}}(\omega)\cap (K\setminus\JmuVt (K))\bigl)
   &\leq t \ \ \text{ for }t>0\label{equation:dimofcomplementofJ1lessthant}\\
\text{ and }  \ \ E_{\underline{V}}(\omega)\cap (K\setminus\JmuVz(K))&=\emptyset.
    \label{empty} 
\end{align}
Observe first that 
\begin{align*}
K\setminus\JmuVt (K)&=\Bigl\{ x\in K\mid \sum_{k=1}^\infty
    \mu\bigl(B(x,r)\cap (K-V_k)\bigr){|V_k|}^t<\infty \text{ for some } r>0\Bigr\}\\
  &=\bigcup_{n=1}^\infty\Bigl\{x\in K\mid \sum_{k=1}^\infty\mu\bigl(B(x,\tfrac 1n)\cap 
    (K-V_k)\bigr){|V_k|}^t<\infty\Bigr\}\eqqcolon \bigcup_{n=1}^{\infty} W_n.
\end{align*}
By the countable stability of Hausdorff dimension, to get
\eqref{equation:dimofcomplementofJ1lessthant} it suffices to show that 
$\dimh(W_n\cap E_{\underline{V}}(\omega))\leq t$ almost surely for every $n\in\N$.
To this end, suppose that $x\in W_n$. Then, since $\lim_{k\to\infty}{|V_k|}=0$, we have that 
\begin{equation}\label{equation:conditionforWn(t)}
\sum_{k=1}^\infty \mu\bigl((\overline{B}(x,\tfrac 1{2n})-V_k)\cap 
(K-V_k)\bigr){|V_k|}^t<\infty.
\end{equation}
Notice now that if 
$(\omega_k+V_k)\cap \overline{B}(x,\frac 1{2n})\cap K\neq \emptyset$, then 
\[
\omega_k\in (\overline{B}(x,\tfrac 1{2n})\cap K)-V_k\subseteq 
(\overline{B}(x,\tfrac 1{2n})-V_k)\cap (K-V_k).
\]
Since $\mc{H}_{\infty}^t(\omega_k+V_k)\leq {|V_k|}^t$ always holds, we obtain
\[
\mc{H}_{\infty}^t\bigl((\omega_k+V_k)\cap \overline{B}(x,\tfrac 1{2n})\cap K\bigr)
\leq \chi_{(\overline{B}(x,\frac 1{2n})-V_k)\cap (K-V_k)}(\omega_k){|V_k|}^t. 
\]
Thus, for any $N$,
\begin{align*}
\E\Bigl( \mc{H}_{\infty}^t\bigl(E_{\underline{V}}(\omega)\cap \overline{B}(x,\tfrac 1{2n})
    \cap K\bigr) \Bigr)&
    \leq \sum_{k=N}^{\infty}\E\Bigl( \mc{H}_{\infty}^t\bigl((\omega_k+V_k)\cap
    \overline{B}(x,\tfrac 1{2n})\cap K\bigr)\Bigr)\\
  &\leq \sum_{k=N}^{\infty}\E\Bigl( \chi_{(\overline{B}(x,\frac 1{2n})-V_k)\cap (K-V_k)}
    (\omega_k){|V_k|}^t\Bigr)\\
  &= \sum_{k=N}^{\infty}\mu\bigl((\overline{B}(x,\frac 1{2n})-V_k)\cap (K-V_k)\bigr){|V_k|}^t.
\end{align*}
By \eqref{equation:conditionforWn(t)}, the last sum tends to zero as $N\to \infty$. Hence, 
\[
\E \left( \mc{H}_{\infty}^t(E_{\underline{V}}(\omega)\cap \overline{B}(x,\tfrac 1{2n})\cap K) 
\right)=0.
\]
This implies that 
$\mc{H}_{\infty}^t\bigl(E_{\underline{V}}(\omega)\cap \overline{B}(x,\frac 1{2n})\cap 
K\bigr)=0$ 
almost surely, giving 
\begin{align*}
\dimh\bigl(E_{\underline{V}}(\omega)\cap \overline{B}(x,\tfrac 1{2n})\cap K)
&\leq t \ \ \text{ for }t>0\\
\text{ and }\ \ E_{\underline{V}}(\omega)\cap \overline{B}(x,\tfrac 1{2n})\cap K 
&=\emptyset\ \ \text{ for }t=0
\end{align*}
almost surely. By covering the set $W_n\subseteq K$ by finitely many closed balls of radius
$\frac 1{2n}$ and centres in $W_n$, we obtain the claims 
\eqref{equation:dimofcomplementofJ1lessthant} and \eqref{empty}. 

For the rest of the proof, we assume that $t>0$. The case $t=0$ can be done in a similar 
manner. We will show by transfinite induction that, for every countable ordinal $\lambda$, 
\begin{equation}\label{equation:inductionclaimforJt}
  \dimh \bigl(E_{\underline{V}}(\omega)\cap (C\setminus C_t^{\lambda})\bigr)\leq t  
\end{equation}
almost surely. This implies the desired claim, since $\beta_t$ is countable (recall 
Proposition~\ref{lemma:Itildeinvariantsetfoundaftercountablymanysteps}). The case 
$\lambda=0$ is trivial and the case $\lambda=1$ is already established by
\eqref{equation:dimofcomplementofJ1lessthant}. Suppose that, for some countable ordinal 
$\lambda$, 
\[
\dimh \bigl(E_{\underline{V}}(\omega)\cap (C\setminus C_t^{\gamma})\bigr)\leq t
\]
almost surely for every $\gamma<\lambda$. Then, if $\lambda$ is a successor ordinal, there is 
an ordinal number $\gamma$ such that $\lambda=\gamma+1$ and
$C_t^{\lambda}=\JmuVt (C_t^{\gamma})\subseteq C_t^{\gamma}$. We have that 
\begin{align*}
E_{\underline{V}}(\omega)\cap(C\setminus C_t^{\lambda})&=(E_{\underline{V}}(\omega)
  \cap(C\setminus C_t^{\gamma}))\cup (E_{\underline{V}}(\omega)\cap(C_t^{\gamma}\setminus 
  C_t^{\lambda}))\\
&=(E_{\underline{V}}(\omega)\cap(C\setminus C_t^{\gamma}))\cup (E_{\underline{V}}(\omega)  
  \cap(C_t^{\gamma}\setminus \Jmurt(C_t^{\gamma})).  
\end{align*}
The first set in the union has Hausdorff dimension at most $t$ almost surely by the induction
hypothesis \eqref{equation:inductionclaimforJt}  and since $C_t^{\gamma}$ is compact, the 
latter set in the union has Hausdorff
dimension at most $t$ almost surely by \eqref{equation:dimofcomplementofJ1lessthant}. Thus,
\eqref{equation:inductionclaimforJt} holds for successor ordinals.

If $\lambda$ is a limit ordinal, then $C_t^{\lambda}=\bigcap_{\gamma<\lambda}C_t^{\gamma}$, 
whence
\begin{align*}
\dimh \bigl(E_{\underline{V}}(\omega)\cap (C\setminus C_t^{\lambda})\bigr)
  &=\dimh\biggl(\bigcup_{\gamma<\lambda}\bigl(E_{\underline{V}}(\omega)\cap (C\setminus 
    C_t^{\gamma})\bigr)\biggr)\\
  &=\sup_{\gamma<\lambda}\dimh\bigl(E_{\underline{V}}(\omega)\cap 
    (C\setminus C_t^{\gamma})\bigr)\leq t
\end{align*}
almost surely since $\lambda$ is countable. By transfinite induction,
\eqref{equation:inductionclaimforJt} holds for every countable ordinal $\lambda$ and the proof 
of the lemma is complete.
\end{proof}

The following corollary gives a necessary and a sufficient condition depending only on
$C_0^{\beta_0}$ for the event 
$C\cap E_{\underline{V}}\neq \emptyset $ to happen almost surely and, together with 
\eqref{empty}, immediately implies 
Theorem~\ref{lemma:ifinvariantsetemptythennointersection-intro}:

\begin{corollary}\label{lemma:ifinvariantsetemptythennointersection}
Let $\mu \in \mc{P}(\R^d)$, let $\underline{V}$ be an admissible sequence of open
sets and let $C\subseteq \spt \mu$ be compact. Let $\beta_0$ be the ordinal number given by
Proposition~\ref{lemma:Itildeinvariantsetfoundaftercountablymanysteps}. Then, almost surely,
\[
C\cap E_{\underline{V}}(\omega)
           \begin{cases} 
                  =\emptyset&\text{ if } \, C_0^{\beta_0}=\emptyset,\\
                  \ne\emptyset&\text{ if }\,  C_0^{\beta_0}\ne\emptyset.
           \end{cases}       
\]
\end{corollary}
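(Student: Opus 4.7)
The plan is to derive the corollary by straightforward bookkeeping, combining Propositions \ref{lemma:conditionforpositivehittingprobability} and \ref{lemma:necessaryoconditionfordimensionofintersection} with the fixed-point property built into the very definition of $\beta_0$. The key observation is that, by Proposition \ref{lemma:Itildeinvariantsetfoundaftercountablymanysteps} applied with $t=0$, the compact set $C_0^{\beta_0}$ satisfies $\JmuVz(C_0^{\beta_0}) = C_0^{\beta_0+1} = C_0^{\beta_0}$, and it lies inside $C_0^0 = C \subseteq \spt \mu$. Both propositions can therefore be invoked on it.

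For the first case, assume $C_0^{\beta_0} = \emptyset$. Then $C \setminus C_0^{\beta_0} = C$, so the \emph{``Furthermore''} conclusion of Proposition \ref{lemma:necessaryoconditionfordimensionofintersection} (the $t=0$ statement, which asserts actual emptiness rather than a dimension bound) immediately yields
\[
C \cap E_{\underline{V}}(\omega) = (C \setminus C_0^{\beta_0}) \cap E_{\underline{V}}(\omega) = \emptyset
\]
almost surely.

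For the second case, assume $C_0^{\beta_0} \neq \emptyset$. Then $C_0^{\beta_0}$ is a nonempty compact subset of $\spt \mu$ which, by the previous paragraph, is a fixed point of $\JmuVz$. Applying Proposition \ref{lemma:conditionforpositivehittingprobability} with $C$ replaced by $C_0^{\beta_0}$ gives $C_0^{\beta_0} \cap E_{\underline{V}}(\omega) \neq \emptyset$ almost surely, and since $C_0^{\beta_0} \subseteq C$, one concludes $C \cap E_{\underline{V}}(\omega) \neq \emptyset$ almost surely.

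There is no substantive obstacle: the corollary is purely a packaging of the two propositions, with $\beta_0$ producing exactly the hypothesis needed to invoke Proposition \ref{lemma:conditionforpositivehittingprobability} in the nontrivial direction. All the real work has been done earlier in Section \ref{section:hittingprob} --- the transfinite iteration producing $\beta_0$ in Proposition \ref{lemma:Itildeinvariantsetfoundaftercountablymanysteps}, the second-moment and Borel--Cantelli construction of a measure supported on $C \cap E_{\underline{V}}(\omega)$ in Proposition \ref{lemma:conditionforpositivehittingprobability}, and the Hausdorff-content estimate combined with transfinite induction in Proposition \ref{lemma:necessaryoconditionfordimensionofintersection}.
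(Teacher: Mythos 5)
Your proposal is correct and coincides with the paper's own proof: the paper likewise deduces the empty case from the ``Furthermore'' part of Proposition~\ref{lemma:necessaryoconditionfordimensionofintersection} and the nonempty case by applying Proposition~\ref{lemma:conditionforpositivehittingprobability} to the fixed point $C_0^{\beta_0}$. No issues.
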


\begin{proof}
The first claim follows from 
Proposition~\ref{lemma:necessaryoconditionfordimensionofintersection} and the second one
from Proposition~\ref{lemma:conditionforpositivehittingprobability} applied to 
$C_0^{\beta_0}$.
\end{proof}

One may ask whether a counterpart of
Corollary~\ref{lemma:ifinvariantsetemptythennointersection} is valid also  when 
$C\subseteq\spt \mu$ is a Borel set or an analytic set. Since the neighbourhood of a set and 
that of its closure coincide, the definition of the hitting operator 
$\Jmurt$ (or $\JmuVt$) will not distinguish between a set and its closure, whilst the 
questions whether  
the random covering set intersects a set or its closure are drastically different. However,
the following proposition guarantees that  in the context of the hitting probability problem, 
analytic sets can be approximated from the inside by compact sets, implying that 
Corollary~\ref{lemma:ifinvariantsetemptythennointersection} can also be applied to study 
analytic sets.

\begin{proposition}\label{compacthitting}
Let $\mu \in \mc{P}(\R^d)$ and let $\underline{V}$ be an admissible sequence of open sets.
Assume that $A\subset\R^d$ is an analytic set such that 
$A\cap E_{\underline{V}}(\omega)\ne\emptyset$ almost surely. Then there exists a compact 
set $C\subset A$ such that $C\cap E_{\underline{V}}(\omega)\ne\emptyset$ almost surely.
\end{proposition}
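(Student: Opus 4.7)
The plan is to reduce to the compact case via a Jankov--von Neumann measurable selection of a random point in $A \cap E_{\underline{V}}(\omega)$, followed by inner regularity of the resulting pushforward measure and the Kolmogorov zero--one law. Since $\spt\mu$ is compact and $|V_k|\to 0$, the random set $E_{\underline{V}}(\omega)$ is contained in a fixed compact cube $X$ containing $\spt\mu$ together with a suitable neighbourhood, so one may replace $A$ by $A\cap X$ and assume without loss of generality that $A\subset X$.

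I would then introduce the set
\[
R := \{(\omega, x) \in \Omega \times X \mid x \in A \cap E_{\underline{V}}(\omega)\}.
\]
The condition $x \in E_{\underline{V}}(\omega) = \limsup_k(\omega_k + V_k)$ is Borel in $(\omega, x)$ (being a countable Boolean combination of the Borel sets $\{(\omega, x) : x - \omega_k \in V_k\}$), so $R$ is analytic in $\Omega \times X$ as the intersection of the analytic set $\Omega \times A$ with a Borel set. By hypothesis, the projection of $R$ on $\Omega$ has $\Pb$-probability $1$, so the Jankov--von Neumann measurable selection theorem provides a universally measurable map $\phi : \Omega \to X$ with $\phi(\omega) \in A \cap E_{\underline{V}}(\omega)$ for $\Pb$-almost every $\omega$.

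Next, I would define the pushforward $\nu := \phi_* \Pb$, a Borel probability measure on $X$ satisfying $\nu(A) = 1$. Since $X$ is compact (hence Polish) and $A$ is analytic, the inner regularity of $\nu$ on analytic sets yields, for every $\varepsilon > 0$, a compact set $C \subset A$ with $\nu(C) > 1 - \varepsilon$; in particular $\Pb(\phi(\omega) \in C) > 0$. On this event, the point $\phi(\omega)$ belongs to $C \cap E_{\underline{V}}(\omega)$, so $\Pb(C \cap E_{\underline{V}}(\omega) \neq \emptyset) > 0$. Since $E_{\underline{V}}(\omega)$ is unchanged under modification of finitely many of the coordinates $\omega_k$, the event $\{C \cap E_{\underline{V}}(\omega) \neq \emptyset\}$ is tail-measurable for the independent sequence $(\omega_k)_{k=1}^\infty$, and Kolmogorov's zero--one law upgrades the positive probability to $1$, giving the desired compact set $C \subset A$ hit by $E_{\underline{V}}(\omega)$ almost surely. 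The main technical points to verify carefully are the analyticity of $R$ and the applicability of both the Jankov--von Neumann selection theorem and inner regularity of $\nu$ on the analytic set $A$; these are standard but rely on $X$ being Polish.
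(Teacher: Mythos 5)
Your proposal is correct and follows essentially the same route as the paper: both form the analytic set $\{(\omega,x)\mid x\in A\cap E_{\underline{V}}(\omega)\}$, apply the Jankov--von Neumann uniformisation theorem (the paper cites \cite[Theorem 18.1]{K1995}) to obtain a measurable selection, push $\Pb$ forward to get a Borel measure $\nu$ with $\nu(A)=1$, extract by inner regularity a compact $C\subset A$ of positive $\nu$-measure, and upgrade the resulting positive hitting probability to $1$ via the tail-event/zero--one law argument. No substantive differences.
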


\begin{proof}
Consider the set 
$G:=\{(\omega,x)\in\Omega\times A\mid x\in E_{\underline{V}}(\omega)\}
 \subset\Omega\times\R^d$. 
 Since
it is analytic, there is a uniformising function $f\colon\pi_1(G) \to A$ that is
measurable with respect to the $\sigma$-algebra generated by analytic sets by 
\cite[Theorem 18.1]{K1995}. Here $\pi_1\colon\Omega\times A\to\Omega$ is the
projection onto the first coordinate.
Define $\nu:=f_\ast\Pb$. Then $\nu$ is a Borel probability measure with $\nu(A)=1$  since
$\Pb(\pi_1(G))=1$. As all Borel probability measures on $\R^d$ are inner regular, there
exists a compact set $C\subset A$ with $\nu(C)>0$. Now
\[
0<\nu(C)=\Pb(\{\omega\in\Omega\mid f(\omega)\in C\})\le\Pb(\{\omega\in\Omega\mid
   C\cap E_{\underline{V}}(\omega)\ne\emptyset\}).
\] 
Since $C\cap E_{\underline{V}}(\omega)\ne\emptyset$ is a tail event, the claim follows.
\end{proof}

\section{Proof of inequality \eqref{bounds-fmu} in 
         Theorem~\ref{theorem:dimhatleastconjecture} } \label{section:lowerbound}
    
In this section, we prove Theorem~\ref{thm:dimhofintersectionwithsublevelset}, which concerns 
a general measure $\mu\in \mathcal{P}(\R^d)$ and a fixed sequence 
of radii $\underline r(\alpha) =(k^{-\alpha})_{k=1}^\infty$ and is slightly stronger than
Theorem~\ref{theorem:dimhatleastconjecture}.

\begin{theorem}\label{thm:dimhofintersectionwithsublevelset}
Let $\mu\in \mathcal{P}(\R^d)$ and $t>0$ with $D_\mu(t) \neq \emptyset$. Then, almost surely, for every $\alpha>0$ with $t-F_\mu(t)<\frac 1\alpha \leq t$, we have that
\begin{equation}
\label{eq-min-dim-Ealpha}
\dimh (E_{\alpha}(\omega)\cap {D_\mu(t)})\geq \frac{1}{\alpha}+F_\mu(t)-t.
\end{equation}
\end{theorem}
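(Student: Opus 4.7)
The strategy is to construct, inside $D_\mu(t)$, a Cantor-like compact set of Hausdorff dimension arbitrarily close to $s:=\frac{1}{\alpha}+F_\mu(t)-t$ that is almost surely contained in $E_\alpha(\omega)$. This is done by iterating the hitting-operator machinery of Section~\ref{section:hittingprob} at successively finer scales, inserting at each stage a controlled number of branches dictated by the multifractal structure of $\mu$ on $D_\mu(t)$.

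First, I would fix $\epsilon>0$ small and extract a compact $K\subset D_\mu(t)$ with $\dimh K>F_\mu(t)-\epsilon$, equipped with a Frostman measure $\sigma$ satisfying $\sigma(B(x,r))\le r^{F_\mu(t)-2\epsilon}$. Using that $\dimlocl\mu\le t$ on $D_\mu(t)$ together with an Egorov-type argument, I would further shrink $K$ so that the lower bound $\mu(B(x,\rho))\ge\rho^{t+\epsilon}$ holds uniformly on $K$ along a common sequence of scales $\rho\to 0$. This couples the regularity of $\sigma$ (which controls Hausdorff dimension from above) and the mass lower bounds on $\mu$ (which control hitting probabilities from below) on the same compact skeleton.

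Next, partition $\mathbb N$ into successive blocks $I_n$ of indices $k$ for which $k^{-\alpha}$ lies near the $n$-th uniform scale $\rho_n$. On each block, the balls $B(\omega_k,k^{-\alpha})$ for $k\in I_n$ act as $\rho_n$-scale random targets. I would then iteratively apply a block-version of Proposition~\ref{lemma:conditionforpositivehittingprobability}, organised around an auxiliary admissible sequence $\underline V^{*}$ whose relevant open sets are contained in $B(0,k^{-\alpha})$ and synchronised with $\rho_n$, producing a nested sequence of compact sets $K=K_0\supset K_1\supset K_2\supset\cdots$. A counting argument combining the lower bound $\mu(B(x,\rho_n))\ge\rho_n^{t+\epsilon}$ with the Frostman upper bound on $\sigma$ then shows that the $\rho_n$-scale branching structure of $K_n$ is rich enough for the intersection $K_\infty:=\bigcap_n K_n\subseteq K\cap E_\alpha(\omega)$ to have Hausdorff dimension at least $s-\epsilon'$, where $\epsilon'\to 0$ with $\epsilon$. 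Sending $\epsilon\to 0$ settles the bound for a fixed $\alpha$; the ``almost surely for every $\alpha$'' statement then follows from the monotonicity $E_{\alpha'}(\omega)\subseteq E_\alpha(\omega)$ for $\alpha'>\alpha$ applied to a countable dense subset of the admissible range $[1/t,1/(t-F_\mu(t)))$.

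The main obstacle lies in the uniform control of the branching numbers throughout the infinite iteration. At each stage $n$, one needs the partial hitting sum $\sum_{k\in I_n}\mu\bigl(B(x,r)\cap(K_{n-1}-V_k^*)\bigr)$ to diverge fast enough to make the block-version of Proposition~\ref{lemma:conditionforpositivehittingprobability} applicable, and the variance estimate of Lemma~\ref{sublemma} must be transplanted to the $n$-th scale so that the Frostman exponent $s-\epsilon'$ is preserved from one stage to the next. A careful scale separation between consecutive blocks $I_n$ is what encodes the defect $t-F_\mu(t)$ into the final Hausdorff dimension.
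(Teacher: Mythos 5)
Your overall architecture (build a Cantor-type subset of $E_\alpha(\omega)\cap D_\mu(t)$ directly) is genuinely different from the paper's, which first shows that any compact $K\subseteq D_\mu(t)$ with $\dimh K>t-\frac1\alpha$ is hit by $E_\alpha(\omega)$ almost surely (via fixed points of the hitting operators, Lemmas~\ref{lemma:EP7.1}--\ref{lemma:codimensionargumentunlocked}) and then upgrades ``hit'' to a dimension lower bound by a co-dimension argument using the auxiliary random fractals of Lemma~\ref{lemma:JJKLSX2.3}. However, your proposal has a gap at its foundation: the claimed ``Egorov-type argument'' producing a compact $K\subseteq D_\mu(t)$ of positive dimension on which $\mu(B(x,\rho))\ge\rho^{t+\epsilon}$ holds \emph{at a common sequence of scales} $\rho_n\to0$ for all $x\in K$. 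The set $D_\mu(t)$ is defined through the lower local dimension, i.e.\ a $\liminf$ condition: each $x$ admits arbitrarily small good scales, but these scales depend on $x$, and Egorov's theorem cannot synchronise them across an uncountable set. At best one gets that, off a set of small $\sigma$-measure, every $x$ has \emph{some} good scale inside a prescribed window $[\rho_n',1/n]$; the good scale still varies with $x$ within the window and the window's multiplicative length is uncontrolled. This is exactly the central difficulty of the regime $\frac1\alpha\le t$, and it is why the paper replaces pointwise mass estimates by divergence of the sums $\sum_k\mu\bigl(B(x,r)\cap K_\nu(k^{-\alpha_0})\bigr)k^{-\alpha_0(t_0-\varepsilon)}$, obtained \emph{indirectly}: Lemma~\ref{lemma:EP7.1} gives $\nu(E_{\alpha_0}(\omega))=1$ a.s.\ for $\frac1{\alpha_0}>t$, the upper bound \eqref{equation:dimofcomplementofJ1lessthant} then forces $\spt\nu$ to be a fixed point of $\mathcal{J}^{\mu}_{\underline r(\alpha_0),t_0-\varepsilon}$, and a block/pigeonhole argument (part 2 of Lemma~\ref{lemma:transferinformationfromalpha0toalpha}) transfers the divergence from $\alpha_0$ to the target $\alpha$. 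Without a substitute for your uniform-scale claim, the branching counts at stage $n$ of your construction are not justified.

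A secondary issue: even granting the hitting estimates, passing from the nested sequence $K_0\supset K_1\supset\cdots$ to $\dimh K_\infty\ge s-\epsilon'$ requires constructing a Frostman measure on $K_\infty$ with uniform control of branching and separation at every stage; your proposal only names this step. The paper sidesteps it entirely: if $\dimh(E_\alpha(\omega)\cap F)<u$ held with positive probability, the random fractal $A(\xi)$ of Lemma~\ref{lemma:JJKLSX2.3} would miss $E_\alpha(\omega)\cap F$ almost surely, contradicting the fact that $\textbf{A}(\underline{\xi})\cap F$ contains a compact subset of $D_\mu(t)$ of dimension exceeding $t-\frac1\alpha$ and hence meets $E_\alpha(\omega)$ by Lemma~\ref{lemma:codimensionargumentunlocked}. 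Even if you repair the first gap, adopting that co-dimension step would spare you the delicate limit construction.
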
 

Note that Theorem~\ref{thm:dimhofintersectionwithsublevelset} holds trivially if
$D_\mu(t)=\emptyset$ or $F_\mu(t)=0$.

\begin{proof}[Proof that Theorem~\ref{thm:dimhofintersectionwithsublevelset} implies 
              Theorem~\ref{theorem:dimhatleastconjecture}]
For a mapping $g\colon[0,\infty[\to \R$, let us introduce a function 
$\Tilde{g}\colon\mathopen[0,\infty\mathclose[\to\R$ by setting 
$\Tilde{g}(t):=t+\sup_{y\geq t}(g(y)-y)$. Then one easily checks that $\Tilde{g}\leq \overline{g}$ (recall \eqref{defgbar}), and equality holds if $g$ is non-decreasing.

Since $F_{\mu}$ is non-decreasing, we have that 
\[
\overline{F}_{\mu}\Bigl(\frac 1\alpha\Bigr)=\widetilde{F}_{\mu}\Bigl(\frac 1\alpha\Bigr)
=\frac 1\alpha+\sup_{y\geq\frac 1\alpha}(F_{\mu}(y)-y).
\]
Suppose that $\overline{F}_{\mu}(\frac 1\alpha)>0$. Let $\varepsilon>0$ be small and
choose $y\geq \frac 1\alpha$ such that 
\[
\frac 1\alpha +F_{\mu}(y)-y\geq \overline{F}_{\mu}\Bigl(\frac 1\alpha\Bigr)-\varepsilon>0.
\]
Note that this implies that $F_{\mu}(y)>0$. In particular, 
$D_\mu(y)\ne\emptyset$. Since $y-F_{\mu}(y)<\frac 1\alpha \leq y$, 
we have, by Theorem~\ref{thm:dimhofintersectionwithsublevelset}, that almost surely 
\[
f_{\mu}(\alpha)\geq \dimh (E_{\alpha}(\omega)\cap D_\mu({y}))\geq \frac 1\alpha+F_\mu(y)-y\geq 
\overline{F}_{\mu}\Bigl(\frac 1\alpha\Bigr)-\varepsilon.
\]
Letting $\varepsilon\to 0$ through a countable sequence yields the claim.
\end{proof}

In order to prove Theorem~\ref{thm:dimhofintersectionwithsublevelset}, we need a few lemmas. 
Let us recall that a measure $\mu\in \mathcal{P}(\R^d)$ is called an $s$-Frostman
measure, if there is a constant $C>0$ such that $\mu(B(x,r))\le Cr^s$ for all $r>0$ and 
$x\in\R^d$.

We will make use of the following lemma, which is proved in \cite{EP2018} (there the authors
needed this fact only for $(F_\mu(t)-\varepsilon)$-Frostman measures supported on 
$D_\mu(t)$, but the same proof works for any Borel probability measure).

\begin{lemma}\label{lemma:EP7.1}
If $\alpha,t>0$ are such that $t<\frac 1\alpha$, then, for any 
$\nu \in \mc{P}(D_\mu(t))$, 
\[
\nu(E_{\alpha}(\omega))=1
\]
almost surely.
\end{lemma}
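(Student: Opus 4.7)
The plan is to combine Fubini's theorem with the second Borel--Cantelli lemma. Writing
\[
\E\bigl[\nu(E_\alpha(\omega))\bigr]=\int_{D_\mu(t)}\Pb\bigl(x\in E_\alpha(\omega)\bigr)\,d\nu(x)
\]
(the integrand is Borel-measurable in $(x,\omega)$, since $E_\alpha(\omega)$ is obtained from the $\omega_k$'s through countable Boolean operations on open conditions of the form $\|x-\omega_k\|<k^{-\alpha}$), and recalling that $\nu(E_\alpha(\omega))\le 1$, it is enough to prove that $\Pb(x\in E_\alpha(\omega))=1$ for every $x\in D_\mu(t)$: the expectation will then be $1$, forcing $\nu(E_\alpha(\omega))=1$ almost surely.

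Fix $x\in D_\mu(t)$. The events $\{\omega_k\in B(x,k^{-\alpha})\}_{k\in\N}$ are independent under $\Pb$, with respective probabilities $\mu(B(x,k^{-\alpha}))$, and $x\in E_\alpha(\omega)$ exactly when infinitely many of these events occur. By the second Borel--Cantelli lemma, it remains to show
\[
\sum_{k=1}^\infty\mu\bigl(B(x,k^{-\alpha})\bigr)=\infty.
\]
Choose $s$ with $t<s<\frac{1}{\alpha}$, which is possible thanks to the strict inequality $t<\frac{1}{\alpha}$. Since $\dimlocl\mu(x)\le t<s$, there exists a sequence $r_j\to 0$ such that $\mu(B(x,r_j))\ge r_j^s$ for all $j$ large enough.

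For each such $j$, set $I_j:=\{k\in\N\mid r_j\le k^{-\alpha}\le 2r_j\}$, that is $k\in[(2r_j)^{-1/\alpha},r_j^{-1/\alpha}]$, so $\#I_j\gtrsim r_j^{-1/\alpha}$ once $r_j$ is small enough. By monotonicity of $r\mapsto\mu(B(x,r))$, every $k\in I_j$ satisfies $\mu(B(x,k^{-\alpha}))\ge\mu(B(x,r_j))\ge r_j^s$, so
\[
\sum_{k\in I_j}\mu\bigl(B(x,k^{-\alpha})\bigr)\gtrsim r_j^{-1/\alpha}\cdot r_j^s=r_j^{\,s-1/\alpha}\xrightarrow[j\to\infty]{}\infty,
\]
because $s-\frac{1}{\alpha}<0$. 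A single such block already produces an arbitrarily large partial sum, hence $\sum_k\mu(B(x,k^{-\alpha}))=\infty$, as required.

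The only delicate point is passing from the subsequential bound $\mu(B(x,r_j))\ge r_j^s$ to divergence of the full series indexed by $k$. This is taken care of by the monotonicity of the ball measure in the radius, which upgrades the lower bound from the single radius $r_j$ to the entire interval $[r_j,2r_j]$, and hence to a block of indices $k$ of cardinality $\asymp r_j^{-1/\alpha}$; the condition $s<\frac{1}{\alpha}$ then guarantees that this block contributes $r_j^{s-1/\alpha}\to\infty$. No further regularity on $\mu$ is used, which is why the statement holds for an arbitrary $\nu\in\mc{P}(D_\mu(t))$.
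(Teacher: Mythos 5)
Your proof is correct, and it follows essentially the same route as the source the paper cites for this lemma (\cite[Lemmas 7.1 and 7.2]{EP2018}): for each fixed $x\in D_\mu(t)$ one shows $\sum_k\mu(B(x,k^{-\alpha}))=\infty$ by exploiting $\dimlocl\mu(x)\le t<s<\frac1\alpha$ along a sequence of good radii and blocking the indices $k$ with $k^{-\alpha}\in[r_j,2r_j]$, then concludes with the second Borel--Cantelli lemma and Fubini. The paper itself gives no independent argument, so there is nothing further to compare.
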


\begin{proof}
See the proofs of \cite[Lemmas 7.1 and 7.2]{EP2018}.
\end{proof}

\begin{lemma}\label{lemma:transferinformationfromalpha0toalpha}
Fix $t>0$. Suppose that $\nu \in \mc{P}(D_\mu(t))$ is $t_0$-Frostman for 
some $0<t_0\leq F_\mu(t)$. Write $K_\nu :=\spt \nu$ and recall that 
$\underline r(\alpha)=(k^{-\alpha})_{k=1}^\infty$. The following properties hold:
\begin{enumerate}
  \item[1)] for every $\varepsilon >0$ and every $\alpha>0$ with   
            $\frac 1{\alpha}>t$,  $K_\nu= \mathcal{J}^{\mu}_{\underline r(\alpha),t_0-
            \varepsilon}(K_\nu)$.
  \item[2)] for  every $\alpha>0$ such that $\frac 1{\alpha}>t-t_0$,
       $K_\nu=\mathcal{J}^{\mu}_{\underline r(\alpha),0}(K_\nu)$.
\end{enumerate}
\end{lemma}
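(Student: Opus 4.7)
Plan: In both parts, only the inclusion $K_\nu \subseteq \mathcal{J}^{\mu}_{\underline{r}(\alpha),s}(K_\nu)$ needs to be proved (the reverse follows from Lemma~\ref{lem-Jmurt}), where $s = t_0 - \varepsilon$ in 1) and $s = 0$ in 2). Fix $x \in K_\nu$ and $r > 0$. Since $|V_k| = 2r_k$, I need to show $\sum_k r_k^s \mu(B(x, r) \cap K_\nu(r_k)) = \infty$.

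First I would transfer the problem from the behaviour of $\mu$ near $K_\nu$ to the behaviour of $\mu$ around individual points $y \in K_\nu$, using the $t_0$-Frostman hypothesis on $\nu$ combined with Fubini. For $r_k < r/4$, and using the identity $\chi_{B(y, r_k)}(z) = \chi_{B(z, r_k)}(y)$, one has
\[
\int_{B(x, r/4)} \mu(B(y, r_k)) \, d\nu(y) = \int \nu\bigl(B(z, r_k) \cap B(x, r/4)\bigr) \, d\mu(z).
\]
The right-hand integrand vanishes outside $K_\nu(r_k) \cap B(x, r/2)$ and is bounded by $C r_k^{t_0}$ on it by the Frostman condition. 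Rearranging, summing over $k$, and applying Fubini in reverse produces the key lower bound
\[
\sum_k r_k^s \mu(B(x, r) \cap K_\nu(r_k)) \geq \frac{1}{C} \int_{B(x, r/4)} \sum_k r_k^{s - t_0} \mu(B(y, r_k)) \, d\nu(y).
\]

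Next I would prove pointwise divergence of the inner sum for every $y \in D_\mu(t)$. Set $s_0 := t_0 - s$, so $s_0 = \varepsilon$ in 1) and $s_0 = t_0$ in 2). Given any $\delta > 0$, the assumption $\dimlocl \mu(y) \leq t$ yields a sequence $\rho_j \to 0$ with $\mu(B(y, \rho_j)) \geq \rho_j^{t + \delta}$. The crucial observation is that the sequence $r_k = k^{-\alpha}$ is dense at every scale: the set $\{k : r_k \in [\rho_j, 2\rho_j]\}$ has cardinality $\asymp \rho_j^{-1/\alpha}$ for all large $j$, and each such $k$ satisfies $r_k \asymp \rho_j$ and $\mu(B(y, r_k)) \geq \mu(B(y, \rho_j)) \geq \rho_j^{t + \delta}$. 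Restricting the inner series to these indices gives a sub-sum of order
\[
\gtrsim \rho_j^{-1/\alpha} \cdot \rho_j^{-s_0} \cdot \rho_j^{t + \delta} = \rho_j^{t + \delta - s_0 - 1/\alpha}.
\]
In 1), the hypothesis $1/\alpha > t$ allows the choice $0 < \delta < \varepsilon + 1/\alpha - t$, making the exponent negative; in 2), the hypothesis $1/\alpha > t - t_0$ allows $0 < \delta < 1/\alpha - t + t_0$, with the same effect. Either way, the sub-sums blow up as $\rho_j \to 0$, so the inner series diverges.

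To finish: since $\spt \nu = K_\nu \subseteq D_\mu(t)$, the inner sum is $+\infty$ on all of $B(x, r/4) \cap K_\nu$, a set of positive $\nu$-measure because $x \in \spt \nu$. Hence the integral, and therefore the original sum, is infinite, giving $x \in \mathcal{J}^{\mu}_{\underline{r}(\alpha), s}(K_\nu)$. The main obstacle is the calibration in the pointwise divergence step: the ``good'' radii $\rho_j$ at which $\mu$ is forced to be somewhat large can be arbitrarily sparse, but the sequence $r_k = k^{-\alpha}$ provides $\asymp \rho^{-1/\alpha}$ indices per scale $[\rho, 2\rho]$, and the hypotheses on $\alpha$ are precisely what is needed to make the exponent $t + \delta - s_0 - 1/\alpha$ negative for some positive $\delta$. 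The weaker hypothesis in 2) is balanced by the heavier weight $r_k^{-t_0}$ replacing $r_k^{-\varepsilon}$ of 1) in the inner sum.
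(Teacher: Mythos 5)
Your proof is correct, and it takes a genuinely different route from the paper's. The paper proves part 1) probabilistically: it invokes the fact that $\nu(E_{\alpha}(\omega))=1$ almost surely when $t<\frac1\alpha$ (Lemma~\ref{lemma:EP7.1}), combines this with the almost sure bound $\dimh\bigl((K_\nu\setminus\JmuVt(K_\nu))\cap E_{\alpha}(\omega)\bigr)\le t_0-\varepsilon$ from \eqref{equation:dimofcomplementofJ1lessthant}, and uses the Frostman property to conclude that $\nu$ gives full mass to $\JmuVt(K_\nu)$, forcing it to equal $\spt\nu$; part 2) is then deduced from part 1) by a change of scale $g(k)=\lfloor k^{\alpha_0/\alpha}\rfloor$ and a dyadic-block summation. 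Your argument is instead a direct deterministic computation: the Fubini/Frostman duality $\int_{B(x,r/4)}\mu(B(y,r_k))\,d\nu(y)\le C r_k^{t_0}\,\mu\bigl(B(x,r)\cap K_\nu(r_k)\bigr)$ reduces everything to the pointwise divergence of $\sum_k r_k^{-s_0}\mu(B(y,r_k))$ for $y\in D_\mu(t)$, which you obtain from the definition of $\dimlocl$ together with the fact that $(k^{-\alpha})_k$ supplies $\asymp\rho^{-1/\alpha}$ indices in each dyadic scale $[\rho,2\rho]$ -- the same structural feature the paper exploits only in its part 2). Your route is more elementary and self-contained (it bypasses Lemma~\ref{lemma:EP7.1} and Proposition~\ref{lemma:necessaryoconditionfordimensionofintersection} entirely), treats both parts uniformly, and in fact yields a slightly stronger version of 1): the same calibration shows $K_\nu=\mathcal{J}^{\mu}_{\underline r(\alpha),t_0}(K_\nu)$ already when $\frac1\alpha>t$. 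The only caveat is cosmetic: the operator is defined with weights $|V_k|^s=(2r_k)^s$ rather than $r_k^s$, but this constant factor is harmless, as you implicitly note.
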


 \begin{proof}
We first prove 1). To this end, fix $\varepsilon>0$ and $\frac 1{\alpha}>t$. Recall 
that $K_\nu\subseteq D_\mu(t)$ is compact and that $t_0\leq F_\mu(t) \leq t$. By 
Lemma~\ref{lemma:EP7.1}, $\nu(E_{\alpha}(\omega))=1$
almost surely. Since $\nu$ is $t_0$-Frostman, $\dimhl \nu\geq t_0$ and, in particular, 
$\nu(A)=0$ for any Borel set $A$ with $\dimh A<t_0$. Consider the set (recall \eqref{defJVt})
\begin{align*}
\widetilde K_\nu &\coloneqq\mathcal{J}^{\mu}_{\underline r(\alpha),t_0-\varepsilon}(K_\nu)\\
&=\Bigl\{ x\in K_\nu\mid \sum_{k=1}^\infty
   \mu\bigl(B(x,r)\cap K_\nu(k^{-\alpha})\bigr)k^{-\alpha(t_0-\varepsilon)}=\infty 
   \text{ for all } r>0\Bigr\}.
\end{align*}
Then $\widetilde K_\nu\subseteq K_\nu$ is compact by Lemma~\ref{lem-Jmurt}.
Furthermore, by \eqref{equation:dimofcomplementofJ1lessthant},
\[
\dimh\bigl((K_\nu\setminus{\widetilde K_\nu}\bigr)\cap E_{\alpha}(\omega))\leq t_0-\varepsilon
\]
almost surely. This implies that 
$\nu((K_\nu\setminus\widetilde K_\nu)\cap E_{\alpha}(\omega))=0$ 
almost surely. Since $\nu(E_{\alpha}(\omega))=1$ almost surely, we must then have that 
$\nu(\widetilde K_\nu\cap E_{\alpha}(\omega))=1$ with probability $1$. This can 
only happen if $\nu(\widetilde K_\nu)=1$. Thus, 
$\widetilde K_\nu\subseteq K_\nu=\spt \nu$ is a compact set with full 
$\nu$-measure, hence $\widetilde K_\nu=\spt \nu=K_\nu$, i.e., $K_\nu$ is a 
fixed point for $\mathcal{J}^{\mu}_{\underline r(\alpha),t_0-\varepsilon}$. 
This  proves 1).

To prove 2), suppose that $\frac 1\alpha>t-t_0$. Fix $\varepsilon>0$ small enough so that 
$t-t_0+3\varepsilon<\frac 1\alpha$, and let $\alpha_0>0$ be such that 
$t<\frac 1{\alpha_0}<t+\varepsilon$. Let $x\in K_\nu$ and $r>0$. By part 1), 
$K_\nu = \mathcal{J}^{\mu}_{\underline r(\alpha_0),t_0-\varepsilon}(K_\nu)$ and we have that
\[
\sum_{k=1}^\infty\mu\bigl(B(x,r)\cap K_\nu(k^{-\alpha_0})\bigr)k^{-\alpha_0(t_0-\varepsilon)}
=\infty.
\]
This implies that 
\begin{equation}\label{equation:lowerboundforalpha0}
\mu\bigl(B(x,r)\cap K_\nu(k^{-\alpha_0})\bigr)k^{-\alpha_0(t_0-\varepsilon)}
  \geq  k^{-(1+\varepsilon\alpha_0)}=k^{-\alpha_0(\frac 1{\alpha_0}+\varepsilon)}
\end{equation}
for infinitely many $k\in \N$, since otherwise the sum above would be finite.
For each $k$ such that \eqref{equation:lowerboundforalpha0} holds, define 
$g(k)\coloneqq \lfloor k^{\frac{\alpha_0}{\alpha}}\rfloor$, where $\lfloor x\rfloor$ is 
the integer part of $x\in\R$. Then, for every large enough 
$k\in\N$ (depending on $\alpha$ and $\alpha_0$), we have that 
\[
2^{\alpha_0}k^{-\alpha_0}\geq (k^{\frac{\alpha_0}{\alpha}}-1)^{-\alpha}
  \geq g(k)^{-\alpha}\geq k^{-\alpha_0}.
\]
Thus, for each large $k\in\N$ satisfying \eqref{equation:lowerboundforalpha0} (note that
$K_\nu(r)\subseteq K_\nu(r')$ if $r\leq r'$), we have that 
\begin{align*}
\mu\bigl(B(x,r)\cap K_\nu(g(k)^{-\alpha})\bigr)&
     \geq \mu\bigl(B(x,r)\cap K_\nu(k^{-\alpha_0})\bigr)
     \geq k^{-\alpha_0(\frac 1{\alpha_0}+\varepsilon)}k^{\alpha_0(t_0-\varepsilon)}\\
   &=k^{-\alpha_0(\frac 1{\alpha_0}-t_0+2\varepsilon)}
     \geq \delta g(k)^{-\alpha(\frac 1{\alpha_0}-t_0+2\varepsilon)}, 
\end{align*}
where $\delta\coloneqq 2^{-\alpha_0(\frac 1{\alpha_0}-t_0+2\varepsilon)}>0$. Since 
$g(k)\to \infty$ as $k\to \infty$, we obtain that 
\begin{equation}\label{equation:lowerbounforalphaobtainedfromalpha0}
  \mu\bigl(B(x,r)\cap K_\nu(j^{-\alpha})\bigr)\geq \delta 
  j^{-\alpha(\frac 1{\alpha_0}-t_0+2\varepsilon)}
\end{equation}
for infinitely many $j\in \N$. Notice now that if $j\in \N$ satisfies
\eqref{equation:lowerbounforalphaobtainedfromalpha0} and $\ell \in \N$ is such that 
$\frac j2\leq \ell \leq j$, then
\[
\mu\bigl(B(x,r)\cap K_\nu(\ell ^{-\alpha})\bigr)\geq\mu\bigl(B(x,r)\cap K_\nu(j^{-\alpha})
\bigr) \geq \delta j^{-\alpha(\frac 1{\alpha_0}-t_0+2\varepsilon)}\geq \delta' 
\ell ^{-\alpha(\frac 1{\alpha_0}-t_0+2\varepsilon)},
\]
where $\delta'=\delta'(\delta, \alpha,\alpha_0,t_0,\varepsilon)>0$. Thus, for any $j\in\N$ 
satisfying \eqref{equation:lowerbounforalphaobtainedfromalpha0}, we have that
\[
\sum_{\ell = \lceil\frac j2\rceil}^j\mu\bigl(B(x,r)\cap K_\nu(\ell ^{-\alpha})\bigr)
\geq \delta' \sum_{\ell =\lceil\frac j2\rceil}^j \ell ^{-\alpha(\frac 1{\alpha_0}-t_0+2\varepsilon)}
\geq \frac{1}{2}\delta' j^{1-\alpha(\frac 1{\alpha_0}-t_0+2\varepsilon)}.
\]
Recalling now that $\frac{1}{\alpha_0}<t+\varepsilon$, we conclude that 
\[
\frac 1{\alpha_0}-t_0+2\varepsilon<t-t_0+3\varepsilon<\frac 1\alpha.
\]
Thus, $j^{1-\alpha(\frac 1{\alpha_0}-t_0+2\varepsilon)}\to \infty$ as $j\to \infty$, and 
since \eqref{equation:lowerbounforalphaobtainedfromalpha0} is satisfied for infinitely many
$j\in \N$, we must have that 
\begin{equation}
\label{sum-infinite}
\sum_{j=1}^{\infty}\mu\bigl(B(x,r)\cap K_\nu(j^{-\alpha})\bigr)=\infty.
\end{equation}
Since $x\in K_\nu$ and $r>0$ were arbitrary, this shows that 
$K_\nu =\mathcal{J}^{\mu}_{\underline r(\alpha) ,0}(K_\nu)$
which completes the proof of part $2)$.
\end{proof}

\begin{lemma}\label{lemma:codimensionargumentunlocked}
Let $\alpha,t>0$ be such that $t-F_\mu(t) <\frac 1\alpha<t$. 
For any compact $K\subseteq D_\mu(t)$, if $\dimh K>t-\frac{1}{\alpha}$ then 
$K\cap E_{\alpha}(\omega)\neq \emptyset$ almost surely.
\end{lemma}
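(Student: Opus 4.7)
The plan is to find a nonempty compact subset $K_\nu \subseteq K$ that is a fixed point of the hitting operator $\mathcal{J}^{\mu}_{\underline{r}(\alpha),0}$, and then invoke Proposition~\ref{lemma:conditionforpositivehittingprobability} to conclude $K_\nu\cap E_{\alpha}(\omega)\ne\emptyset$ almost surely, whence $K\cap E_{\alpha}(\omega)\ne\emptyset$ almost surely.

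To construct $K_\nu$, I would use the hypothesis $\dimh K > t - \frac 1\alpha$ together with $K\subseteq D_\mu(t)$ (which forces $\dimh K \le F_\mu(t)$) to pick a real number $t_0$ with
\[
t-\tfrac 1\alpha < t_0 < \dimh K \le F_\mu(t).
\]
Such a $t_0$ automatically satisfies $t_0>0$ and $\frac 1\alpha > t-t_0$. Since $t_0<\dimh K$ and $K$ is compact, Frostman's lemma provides a Borel probability measure $\nu$ with compact support $K_\nu := \spt\nu \subseteq K$ and $\nu(B(x,r))\le C r^{t_0}$ for every $x\in\R^d$, every $r>0$ and some constant $C>0$. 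In particular $\nu\in\mathcal{P}(D_\mu(t))$ is $t_0$-Frostman with $0<t_0\le F_\mu(t)$ and $\frac 1\alpha > t-t_0$, so part~2) of Lemma~\ref{lemma:transferinformationfromalpha0toalpha} applies and gives $K_\nu = \mathcal{J}^{\mu}_{\underline{r}(\alpha),0}(K_\nu)$. Since $K_\nu\subseteq K\subseteq D_\mu(t)\subseteq\spt\mu$ is nonempty and compact, Proposition~\ref{lemma:conditionforpositivehittingprobability} closes the argument.

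All the genuinely hard work has already been done upstream: building the hitting-operator framework in Section~\ref{section:hittingprob}, proving that fixed points of $\mathcal{J}^{\mu}_{\underline{V},0}$ are hit almost surely (Proposition~\ref{lemma:conditionforpositivehittingprobability}), and the transfer statement Lemma~\ref{lemma:transferinformationfromalpha0toalpha}~2), which exchanges a small Frostman exponent for a slightly larger radius-decay parameter. Within the present lemma the only point to check is that the three constraints on $t_0$ are simultaneously satisfiable, and this is immediate from the chain of inequalities $t-\frac 1\alpha < \dimh K \le F_\mu(t)$; in particular the assumption $\frac 1\alpha < t$ (strict inequality) is never directly used — it only enters implicitly through the availability of part~2) of Lemma~\ref{lemma:transferinformationfromalpha0toalpha}, whose interesting range is precisely $t-t_0 < \frac 1\alpha \le t$.
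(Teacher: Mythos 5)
Your proposal is correct and follows essentially the same route as the paper: choose $t_0$ with $t-\frac 1\alpha<t_0<\dimh K\le F_\mu(t)$, take a $t_0$-Frostman measure $\nu$ supported on $K_\nu\subseteq K$, apply part~2) of Lemma~\ref{lemma:transferinformationfromalpha0toalpha} to see that $K_\nu$ is a fixed point of $\mathcal{J}^{\mu}_{\underline r(\alpha),0}$, and conclude via Proposition~\ref{lemma:conditionforpositivehittingprobability}. The paper's proof is identical in substance (it writes $t_0=t-\frac1\alpha+\varepsilon$), and your side remark that $\frac1\alpha<t$ is not directly invoked is consistent with the statement of part~2), which only requires $\frac1\alpha>t-t_0$.
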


\begin{proof}
Set $\gamma:= t-\frac{1}{\alpha}$. Let $\varepsilon>0$ and let $K\subseteq D_\mu(t)$ be a 
compact set with $\dimh K>\gamma + \varepsilon\eqqcolon t_0$. Then there exists a 
$t_0$-Frostman measure $\nu \in \mc{P}(K)$. Let $K_\nu:=\spt \nu\subseteq K$. 
Observe that $\frac 1\alpha=t-\gamma=t-t_0+\varepsilon$ by the definition of $\gamma$.
By part 2) of Lemma~\ref{lemma:transferinformationfromalpha0toalpha},
\[
K_\nu=\mathcal{J}^\mu_{\underline r(\alpha),0}(K_\nu)=\Bigl\{ x\in K_\nu \mid 
\sum_{j=1}^\infty\mu\bigl(B(x,r)\cap K(j^{-\alpha})\bigr)=\infty 
\text{ for all } r>0\Bigr\}.
\]
Proposition~\ref{lemma:conditionforpositivehittingprobability} implies that  
$K_\nu \cap E_{\alpha}(\omega)\neq \emptyset$ almost surely. 
 Since $K_\nu\subseteq K$, we are done.
\end{proof}

The following lemma is \cite[Lemma 2.3]{JJKLSX2017}:

\begin{lemma}\label{lemma:JJKLSX2.3}
Let $(X,\rho)$ be a compact $d$-Ahlfors regular metric space. Then, for any $u>0$, there
exists a probability space $(\Xi,\Gamma,\widetilde{\Pb})$ and, for each $\xi\in\Xi$, a 
compact set $A(\xi)\subseteq X$ such that, for any analytic set $E\subseteq X$, 
\[
\widetilde{\Pb}(\{\xi\in\Xi\mid A(\xi)\cap E=\emptyset \})=1
\]
if $\dimh E<u$, and 
\[
\Vert \dimh (A(\xi)\cap E) \Vert_{L^{\infty}(\widetilde{\Pb})}=\dimh E-u
\]
if $\dimh E\geq u$.
\end{lemma}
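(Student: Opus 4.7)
The plan is to construct $A(\xi)$ as a random covering set with ``codimension'' $u$ in $X$ and then check both conclusions by standard random-covering methods. Let $\mu$ be the normalised $d$-dimensional Hausdorff measure on $X$, which by Ahlfors $d$-regularity is comparable to a constant multiple of $\mathcal{H}^d$, so that $\mu(B(x,r))\asymp r^d$ uniformly. Set $(\Xi,\Gamma,\widetilde{\Pb}) := (X^{\N},\mu^{\N})$ and let $\xi = (\xi_k)_{k\in\N}$ be the corresponding sequence of i.i.d.\ centres. Assuming $0<u<d$ (the case $u\geq d$ reduces to taking $A(\xi)\equiv\emptyset$), set $r_k := k^{-1/(d-u)}$ and define
\[
A(\xi) := \limsup_{k\to\infty}\overline{B}(\xi_k,r_k),
\]
which is a compact subset of $X$.

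For the upper bound on the dimension of $A(\xi)\cap E$ and the empty-intersection claim, I would argue by a first-moment estimate. Since $A(\xi)\cap E\subseteq\bigcup_{k\geq n}\overline{B}(\xi_k,r_k)$ and only the balls hitting $E$ contribute, Fubini gives
\[
\widetilde{\mathbb{E}}\bigl[\mathcal{H}^s_\infty(A(\xi)\cap E)\bigr]\leq \sum_{k\geq n} r_k^s\,\mu(E(r_k)).
\]
When $\dimh E=t$, a Frostman-type covering of $E$ combined with Ahlfors regularity yields $\mu(E(r))\lesssim r^{d-t-\varepsilon}$ for any $\varepsilon>0$, so the tail is bounded by $\sum_{k\geq n}k^{-(s+d-t-\varepsilon)/(d-u)}$, which tends to $0$ as $n\to\infty$ whenever $s>t-u+\varepsilon$. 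Letting $\varepsilon\to 0$ through a countable sequence gives $\dimh(A(\xi)\cap E)\leq\dimh E-u$ almost surely; when $\dimh E<u$, the same estimate with $s\in(0,u-\dimh E)$ forces $\mathcal{H}^s_\infty(A(\xi)\cap E)=0$ almost surely, hence $A(\xi)\cap E=\emptyset$ almost surely via a countable exhaustion over rational $s>0$.

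For the lower bound on the essential supremum when $\dimh E\geq u$, fix $\varepsilon>0$ and, using that $E$ is analytic, pick a compact $F\subseteq E$ with $\dimh F\geq\dimh E-\varepsilon$ carrying a $(\dimh E-2\varepsilon)$-Frostman probability measure $\nu$. I would build a random nonzero measure $\eta_\xi$ supported on $F\cap A(\xi)$ with finite $(\dimh E-u-3\varepsilon)$-energy on an event of positive $\widetilde{\Pb}$-probability, following the Peyri\`ere--Kahane scheme from random multiplicative chaos: approximate by martingales $\eta_\xi^{(n)}$ obtained from $\nu$ by restricting to $\bigcup_{k=1}^n\overline{B}(\xi_k,r_k)$ and renormalising, verify an $L^2$-bound using $\mu(B(x,r))\asymp r^d$ and the Frostman bound on $\nu$, and take a weak-$\ast$ limit. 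This random measure would certify $\dimh(F\cap A(\xi))\geq\dimh E-u-3\varepsilon$ with positive probability, and the Kolmogorov zero--one law for the tail event $\{\dimh(A(\xi)\cap E)\geq\beta\}$ upgrades this to $\|\dimh(A(\xi)\cap E)\|_{L^\infty(\widetilde{\Pb})}\geq\dimh E-u$ after letting $\varepsilon\to 0$.

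The main obstacle is precisely the $L^2$-martingale construction in the lower-bound step: the radii $r_k=k^{-1/(d-u)}$ sit exactly at the critical threshold $\sum r_k^{d-u}=\sum 1/k=\infty$, so second-moment estimates for $\eta_\xi^{(n)}$ are borderline-divergent. A slight logarithmic regularisation of $r_k$ (or a dyadic truncation and selection of a subsequence of balls with controlled overlaps) is likely required to close the energy estimate; this critical-scaling subtlety is precisely the delicate point underlying \cite[Lemma~2.3]{JJKLSX2017}.
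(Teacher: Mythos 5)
The paper does not prove this lemma; it imports it verbatim from \cite[Lemma~2.3]{JJKLSX2017}, where $A(\xi)$ is the limit set of a fractal percolation on a hierarchical family of cubes (each generation-$n$ cube retained independently with probability comparable to its diameter to the power $u$). Your substitute construction --- a random covering set $A(\xi)=\limsup_k\overline{B}(\xi_k,k^{-1/(d-u)})$ --- has a fatal gap, and it is not the critical-scaling issue you flag in the lower bound: it is the avoidance half, which you treat as routine. First, the estimate $\mu(E(r))\lesssim r^{d-\dimh E-\varepsilon}$ is false; neighbourhood volumes are controlled by the upper box (Minkowski) dimension, not the Hausdorff dimension (a countable dense $E$ has $\dimh E=0$ but $\mu(E(r))\equiv\mu(X)$), and a Frostman cover with $\sum_i\rho_i^{t+\varepsilon}$ small gives no bound on $\mu(E(r))$ because the number of covering balls is unbounded. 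Second, and decisively, your $A(\xi)$ is exactly $E_{\underline r(\alpha)}(\xi)$ with $\alpha=1/(d-u)$, so $s_2(\underline r)=d-u$, and Theorem~\ref{theorem:hittingprobforregularmeasure}~iv) of this very paper shows that it almost surely meets every analytic set with $\dimp E>u$. Taking $X=[0,1]^d$ and a compact $E$ with $\dimh E=0$ and $\dimp E=d$ produces a set with $\dimh E<u$ that $A(\xi)$ hits almost surely, contradicting the first conclusion of the lemma. (Separately, $A(\xi)$ contains the dense $G_\delta$ set $\limsup_k B(\xi_k,r_k)$ while $\mu(A(\xi))=0$ a.s., so it is dense and of measure zero, hence not compact, contrary to both your claim and the statement.)

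The structural reason the covering set cannot work, and what percolation buys, is this: in your first-moment bound all balls placed at time $k$ have the common radius $r_k$, so the cost of covering $A(\xi)\cap E$ is governed by single-scale quantities $\mu(E(r_k))$, and the avoidance threshold is necessarily a box/packing-type dimension. For the percolation limit set one has $\widetilde{\Pb}(A(\xi)\cap Q\ne\emptyset)\lesssim |Q|^u$ for a cube $Q$ of \emph{any} generation, so summing over an arbitrary cover of $E$ by cubes of mixed generations bounds $\widetilde{\Pb}(A(\xi)\cap E\ne\emptyset)$ by a constant times $\mc{H}^u_\infty(E)$, which is what ties the threshold to the Hausdorff dimension; moreover the limit set is a nested intersection of finite unions of closed cubes, hence automatically compact. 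Your lower-bound scheme (compact Frostman subset, second-moment energy estimate, zero--one law) is essentially the argument used in \cite{KPX2000,JJKLSX2017} for the percolation set, where the martingale normalisation is exact rather than critically divergent; but repairing that step cannot rescue the construction, since the other half of the lemma fails for it.
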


\begin{proof}[Proof of Theorem \ref{thm:dimhofintersectionwithsublevelset}]
With the lemmas established in this section so far, we are able to reduce the proof to a
standard co-dimension argument. The proof is inspired by that of \cite[Lemma 3.4]{KPX2000}.

Recall that $F_\mu(t)=\dimh {D_\mu(t)}\le t$. Our aim is to show inequality
\eqref{eq-min-dim-Ealpha}, i.e., for 
$t-F_\mu(t)< \frac 1\alpha\leq t$, almost surely one has 
\[
\dimh (E_{\alpha}(\omega)\cap {D_\mu(t)})\geq \frac{1}{\alpha}+F_\mu(t)-t.
\]

Note first that if we are able to 
prove the claim for every  $\alpha$ such that $t-F_\mu(t)<\frac 1\alpha<t$, the case  
$\frac1\alpha=t$ follows by monotonicity of the sets.
Now fix $t-F_\mu(t)<\frac 1\alpha<t$. We will show a bit more than claimed, 
namely, that for any analytic set $F\subseteq D_\mu(t)$ with 
$\dimh F>t-\frac 1\alpha=:\gamma$, we have that
\begin{equation}\label{equation:intersectionwithanalyticsubset}
 \dimh(E_{\alpha}(\omega)\cap F)\geq \dimh F-\gamma
\end{equation}
almost surely. Fix an analytic set $F\subseteq D_\mu(t)$ with $\dimh F>\gamma$ and let 
$0<u<\dimh F-\gamma$. Applying Lemma~\ref{lemma:JJKLSX2.3} with $X$ equal to a compact cube
in $\R^d$ containing $\spt\mu$ yields the existence of a probability space 
$(\Xi,\Gamma,\widetilde{\Pb})$ and, for each $\xi\in \Xi$, a compact set $A(\xi)\subseteq X$
such that, for any analytic set $E\subseteq D_\mu(t)$, $\widetilde{\Pb}$-almost surely we 
have:
\begin{itemize}
\item [i)]
 if $\dimh E<u$, then $A(\xi)\cap E=\emptyset$.
 \item  [ii)]
 if $\dimh E\geq u$, then
\begin{equation}\label{equation:esssupofdimhofintersection}
\Vert \dimh(A(\xi)\cap E)\Vert_{L^{\infty}(\widetilde{\Pb})}=\dimh E-u.
\end{equation}
 \end{itemize}
We apply this to the analytic set $F$ (recall that $\dimh F>u$). For 
$\underline{\xi}:=(\xi_n)_{n=1}^\infty\in \Xi^{\N}$, let 
$\textbf{A}(\underline{\xi}):=\bigcup_{n=1}^\infty A(\xi_n)$. 
For every $\ell\geq 1$, set 
\[
p_\ell:=\widetilde\Pb\bigl(\{ \xi\in \Xi\mid \dimh (A(\xi)\cap F) \geq \dimh F-u-\tfrac{1}{\ell}\}\bigr).
\]
By \eqref{equation:esssupofdimhofintersection}, $p_\ell>0$, so for 
$\widetilde{\Pb}^{\N}$-almost every $\underline{\xi}$ there exists $\xi_n$ such that 
$\dimh (A(\xi_n) \cap F) \geq \dimh F-u-\frac{1}{\ell}$. Applying this for every integer 
$\ell$, we obtain that 
\[
\dimh({\textbf{A}}(\underline{\xi})\cap F)=\sup_n \dimh (A(\xi_n)\cap F)=\dimh F-u>\gamma
\]
almost surely with respect to the probability measure $\widetilde{\Pb}^{\N}$. Thus, for 
$\widetilde{\Pb}^{\N}$-typical $\underline{\xi}$, the analytic set 
$\textbf{A}(\underline{\xi})\cap F$ contains a compact subset included in $D_\mu(t)$ of 
dimension larger than $\gamma$ (see \cite{D1952,H1995}). Hence, by
Lemma~\ref{lemma:codimensionargumentunlocked}, 
\begin{equation}
\label{nonempty-intersect}
E_{\alpha}(\omega)\cap \textbf{A}(\underline{\xi})\cap F\neq \emptyset
\end{equation}
for $\Pb\times \widetilde{\Pb}^{\N}$-almost every $(\omega, \underline{\xi})$. Observe that, 
by the first item i) above applied to $E= E_{\alpha}(\omega)\cap F$,
for any $\omega \in \Omega$, if $\dimh (E_{\alpha}(\omega)\cap F)<u$, 
then $\widetilde{\Pb}$-almost surely $E_{\alpha}(\omega)\cap F\cap A(\xi)=\emptyset$. 
This, in turn, would imply that 
$E_{\alpha}(\omega)\cap F\cap\textbf{A}(\underline{\xi})=\emptyset$ for 
$\widetilde{\Pb}^{\N}$-almost every $\underline{\xi}$, contradicting 
\eqref{nonempty-intersect} if 
$\Pb(\{\omega\in\Omega\mid\dimh(E_{\alpha}(\omega)\cap F)<u\})>0$. Thus, we must have that 
\[
\dimh (E_{\alpha}(\omega)\cap F)\geq u
\]
$\Pb$-almost surely. Letting $u\nearrow \dimh F-\gamma$ through a countable sequence 
completes the proof.
\end{proof}

\section{Optimality of inequality \eqref{bounds-fmu}  in 
         Theorem~\ref{theorem:dimhatleastconjecture}  }\label{section:examples}

In this section, we construct two examples illustrating the possible dimensional behaviour of
random covering sets in the case where the Hausdorff and packing spectra of the generating
measure do not agree. In particular, we construct a measure $\muTwo$ in 
Section~\ref{example:dimensionispackingspectrum} showing that the 
Ekstr\"om--Persson conjecture is not true in the regime $\frac 1\alpha>\dimhu\muTwo$.

\subsection{Some technical lemmas}

\begin{lemma}\label{lemma:sumconvergestoexpectationas}
Let $(\widetilde\Omega,\widetilde\Pb)$ be a probability space. Let $(A_n)_{n=1}^{\infty}$ be 
a sequence of independent events in $\widetilde\Omega$, and let $(a_n)_{n=1}^{\infty}$ be a 
bounded sequence of positive numbers. Suppose that $(N_n)_{n=1}^{\infty}$ is a strictly
increasing sequence of natural numbers such that 
\[
\sum_{n=1}^\infty E_n^{-1}<\infty,
\]
where $E_n:=\sum_{k=N_{n-1}+1}^{N_n}\widetilde\Pb (A_k)a_k> 0$. For all 
$\tilde\omega\in\widetilde\Omega$, define 
\[
S_n(\tilde\omega):=\sum_{k=N_{n-1}+1}^{N_n} \chi_{A_k}(\tilde\omega) a_k.
\]
Then $\lim_{n\to\infty}\frac{S_n}{E_n}=1$ almost surely.
\end{lemma}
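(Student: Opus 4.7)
The plan is to apply Chebyshev's inequality followed by the Borel--Cantelli lemma, exploiting the independence of the events $A_k$ to control the variance of $S_n$. This is a natural strategy since we are already given a summability hypothesis on $(E_n^{-1})$, which is precisely the type of tail bound needed for an almost sure convergence argument.

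First I would compute the first two moments of $S_n$. Since $\widetilde\E(\chi_{A_k}) = \widetilde\Pb(A_k)$, linearity of expectation immediately gives $\widetilde\E(S_n) = E_n$. For the variance, I use independence of the events $A_k$, which yields
\[
\mathrm{Var}(S_n) = \sum_{k=N_{n-1}+1}^{N_n} a_k^2\, \widetilde\Pb(A_k)(1-\widetilde\Pb(A_k)) \le \sum_{k=N_{n-1}+1}^{N_n} a_k^2\, \widetilde\Pb(A_k).
\]
Since $(a_k)$ is bounded, let $M := \sup_k a_k < \infty$, so that $a_k^2 \le M a_k$ and hence $\mathrm{Var}(S_n) \le M E_n$.

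Next I would apply Chebyshev's inequality to estimate, for each $\varepsilon > 0$,
\[
\widetilde\Pb\Bigl( \Bigl| \frac{S_n}{E_n} - 1 \Bigr| \geq \varepsilon \Bigr)
= \widetilde\Pb(|S_n - E_n| \ge \varepsilon E_n)
\le \frac{\mathrm{Var}(S_n)}{\varepsilon^2 E_n^2}
\le \frac{M}{\varepsilon^2 E_n}.
\]
Summing over $n$ and using the hypothesis $\sum_n E_n^{-1} < \infty$, the series on the right converges. By the Borel--Cantelli lemma, for each fixed $\varepsilon > 0$,
\[
\widetilde\Pb\Bigl( \limsup_{n\to\infty} \Bigl\{ \Bigl|\tfrac{S_n}{E_n}-1\Bigr| \geq \varepsilon \Bigr\} \Bigr) = 0.
\]

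Finally, to upgrade this to almost sure convergence, I would intersect these full-measure sets along a countable sequence $\varepsilon_m \searrow 0$ (say $\varepsilon_m = 1/m$). On this intersection of full-measure events, $|S_n/E_n - 1| < \varepsilon_m$ for all sufficiently large $n$ (depending on $m$ and $\tilde\omega$), which gives $\lim_{n\to\infty} S_n(\tilde\omega)/E_n = 1$. There is no real obstacle in this argument; the only delicate point is to note that the boundedness of $(a_k)$ is what allows us to pass from the inequality $\mathrm{Var}(S_n) \le \sum a_k^2 \widetilde\Pb(A_k)$ to a bound linear in $E_n$, which is exactly what pairs with the summability of $(E_n^{-1})$ to close the argument.
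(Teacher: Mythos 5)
Your proof is correct and is essentially identical to the paper's: both compute $\widetilde\E(S_n)=E_n$, use independence to bound $\mathrm{Var}(S_n)\le (\sup_k a_k)E_n$, apply Chebyshev and Borel--Cantelli using the hypothesis $\sum_n E_n^{-1}<\infty$, and conclude by letting $\varepsilon\to 0$ along a countable sequence. No gaps.
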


\begin{proof}
Let $Z_n:=\frac{S_n}{E_n}$. Then $\E(Z_n)=1$ and
\begin{align*}
    \mathrm{Var}(Z_n)&=E_n^{-2}\sum_{k=N_{n-1}+1}^{N_n}a_k^2\mathrm{Var}(\chi_{A_k})
    \leq E_n^{-2}\sum_{k=N_{n-1}+1}^{N_n}a_k^2 \E(\chi_{A_k})\\
    &=E_n^{-2}\sum_{k=N_{n-1}+1}^{N_n}a_k^2 \widetilde\Pb(A_k)
    \leq a_0E_n^{-1},
\end{align*}
where $a_0\coloneqq \sup_ka_k<\infty.$ Let $\varepsilon>0$. By the Chebyshev's inequality,
\[
\sum_{n=1}^\infty\widetilde\Pb\{ |Z_n-1|\geq\varepsilon\}\leq \varepsilon^{-2}
\sum_{n=1}^\infty\mathrm{Var}(Z_n)\leq a_0\varepsilon^{-2}\sum_{n=1}^\infty E_n^{-1}<\infty.
\]
By the Borel--Cantelli lemma, 
\[
\limsup_{n\to \infty} |Z_n-1|\leq \varepsilon
\]
almost surely. Thus, 
\[
\lim_{n\to\infty}\frac{S_n}{E_n}=\lim_{n\to\infty}Z_n=1
\]
almost surely.
\end{proof}

The following lemma is \cite[Lemma 9.5]{EP2018}:

\begin{lemma}\label{lemma:EP9.5}
Let $\underline{\mu}$ and $\underline{\nu}$ be sequences of probability measures on a 
separable metric space $X$ and let $\underline{r}$ be a sequence of positive numbers. Assume
that there is a constant $C$ such that, for $k=1,2,\ldots$ and for every $x\in \spt \mu_k$,
\[
\mu_k(B(x,2r_k))\leq C\nu_k(B(x,r_k)).
\]
Then $f_{\underline{\mu}}(\underline{r})\leq f_{\underline{\nu}}(2\underline{r})$.
\end{lemma}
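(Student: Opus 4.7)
The plan is to derive $f_{\underline{\mu}}(\underline{r}) \leq f_{\underline{\nu}}(2\underline{r})$ by constructing, on a common probability space, the $\underline{\mu}$-process together with $m$ independent copies of the $\underline{\nu}$-process in such a way that the $\underline{\mu}$-covering set is almost surely contained in the union of the corresponding doubled $\underline{\nu}$-covering sets. The dimension inequality then follows by countable stability of $\dimh$ together with Kolmogorov's zero-one law applied to each copy.

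The first step is to promote the pointwise hypothesis into a global transport estimate. Fix $k$ and let $A \subseteq \spt \mu_k$ be closed. The Vitali $5r$-covering lemma applied inside $A$ produces a family $\{B(y_i, r_k)\}_i$ with centres $y_i \in A$ that covers $A$ and whose multiplicity is bounded by a purely dimensional constant $\kappa_d$. Since $y_i \in \spt \mu_k$, the hypothesis applies and, using $B(y_i, r_k) \subseteq A(r_k)$, one obtains
\[
\mu_k(A) \leq \sum_i \mu_k(B(y_i, 2r_k)) \leq C \sum_i \nu_k(B(y_i, r_k)) \leq C \kappa_d \, \nu_k(A(r_k)).
\]
Inner regularity extends this to all Borel $A \subseteq \spt \mu_k$. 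Setting $m := \lceil C \kappa_d \rceil$ gives the global transport bound $\mu_k(A) \leq m \, \nu_k(A(r_k))$.

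The second step is to use this bound and a Strassen-type argument to construct, for each $k$, a joint distribution of $(\omega_k, \tilde\omega_k^{(1)}, \ldots, \tilde\omega_k^{(m)})$ on $\R^d \times (\R^d)^m$ with $\omega_k \sim \mu_k$, each $\tilde\omega_k^{(j)}$ marginally $\nu_k$, and $\min_{1 \leq j \leq m} |\omega_k - \tilde\omega_k^{(j)}| \leq r_k$ almost surely. Realising these couplings independently across $k$ produces a probability space carrying $(\omega_k)_k$ distributed as the $\underline{\mu}$-process and, for each fixed $j$, $(\tilde\omega_k^{(j)})_k$ distributed as the $\underline{\nu}$-process. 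Pathwise, $B(\omega_k, r_k) \subseteq \bigcup_{j=1}^m B(\tilde\omega_k^{(j)}, 2r_k)$ for every $k$, and the pigeonhole principle (valid since $m$ is finite) yields
\[
E_\mu(\underline r) \subseteq \bigcup_{j=1}^m E_{\nu, j}(2\underline r),
\]
where $E_{\nu, j}(2\underline r) := \limsup_k B(\tilde\omega_k^{(j)}, 2r_k)$. Each $E_{\nu, j}(2\underline r)$ has almost sure Hausdorff dimension $f_{\underline{\nu}}(2\underline r)$ by the zero-one law, giving the conclusion.

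The main technical obstacle is the multi-sample coupling in the second step. The case $m = 1$ is Strassen's classical theorem; for $m > 1$, one can proceed by induction, splitting $\mu_k$ as a sum of two sub-probabilities whose transport bounds match levels $m-1$ and $1$ respectively, applying the inductive hypothesis and Strassen separately, and mixing the two resulting couplings with appropriate weights. The delicate point is verifying that each of the $m$ target marginals remains exactly $\nu_k$ after the mixture, which requires careful bookkeeping of how independent extensions are layered on the close-coupled parts. Once this coupling is in hand, the pigeonhole inclusion and the appeal to the zero-one law are routine.
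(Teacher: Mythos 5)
The paper does not prove this lemma itself; it imports it verbatim from \cite[Lemma 9.5]{EP2018}, so there is no in-paper argument to compare against and I judge your proposal on its own merits. The overall coupling strategy is viable, and the final steps (pathwise inclusion $B(\omega_k,r_k)\subseteq\bigcup_j B(\tilde\omega_k^{(j)},2r_k)$, pigeonhole over the $m$ copies, finite stability of $\dimh$, and the zero-one law applied to each copy) are sound. Two points, however, are genuine gaps.

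First, the transport estimate is not obtained by the tool you cite, and it is not available in the stated generality. The Vitali $5r$-covering lemma produces a \emph{disjoint} subfamily whose dilates cover $A$; it does not produce a cover of $A$ by balls $B(y_i,r_k)$ of bounded multiplicity. What works is a maximal $r_k$-separated subset $\{y_i\}$ of $A$: then $A\subseteq\bigcup_i B(y_i,2r_k)$ and the balls $B(y_i,r_k)$ have overlap at most a constant $\kappa_d$ by the usual volume argument (or one invokes the Besicovitch covering theorem). Either route uses the doubling property of $\R^d$, whereas the lemma is stated for an arbitrary separable metric space; your argument therefore proves a strictly weaker statement. This is harmless for the application in Section 4.3 of this paper, where $X\subset\R$, but it should be acknowledged.

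Second, the multi-marginal coupling is the heart of the argument and is left as an admitted sketch; the inductive splitting you outline is also more complicated than necessary. A clean construction: the transport bound reads $\frac1m\mu_k(A)\le\nu_k(A(r_k))$ for all closed $A$, so by Strassen's theorem in its domination form there is a measure $\lambda_k$ on $X\times X$ concentrated on $\{(x,y)\mid d(x,y)\le r_k\}$ with first marginal $\frac1m\mu_k$ and second marginal a sub-measure $\nu_k'\le\nu_k$ of total mass $\frac1m$. Draw $J$ uniformly from $\{1,\dots,m\}$, draw $(\omega_k,\tilde\omega_k^{(J)})$ from the probability measure $m\lambda_k$, and draw the remaining $\tilde\omega_k^{(i)}$, $i\ne J$, independently from $\frac{m}{m-1}(\nu_k-\nu_k')$. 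Then $\omega_k\sim\mu_k$, one has $\min_{1\le j\le m} d(\omega_k,\tilde\omega_k^{(j)})\le r_k$ surely, and each copy has marginal $\frac1m\,(m\nu_k')+\frac{m-1}{m}\cdot\frac{m}{m-1}(\nu_k-\nu_k')=\nu_k$, which settles exactly the bookkeeping you flagged. With these two repairs your proof goes through in $\R^d$.
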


We note that, using Lemma~\ref{lemma:EP9.5}, Ekstr\"om and Persson proved another lemma 
\cite[Lemma 9.6]{EP2018} which would be sufficient to show that Conjecture~\ref{conjecture:EP} 
is not valid for all $\alpha>0$ but, in our example, that lemma does not yield the correct 
lower bound for $f_{\mu}(\alpha)$ for all $\alpha>0$. To remedy this, we have the following
substitute for \cite[Lemma 9.6]{EP2018}, whose proof also uses Lemma~\ref{lemma:EP9.5}.

\begin{lemma}\label{lemma:modifiedEP9.6}
Let $\mu$ and $\theta$ be probability measures on a separable metric space $X$, let 
$(V_k)_{k=1}^{\infty}$ be a sequence of Borel subsets of $X$ such that $0<\mu(V_k)<1$ for
every $k\in\N$ and let $\underline r$ be a sequence of positive numbers tending to $0$.
Suppose that there exists a number $s>0$ such that 
\begin{equation}\label{s2assumption}
f_{\theta}(\underline{\rho})=s_2(\underline{\rho})
\end{equation}
for every sequence $\underline{\rho}=(\rho_k)_{k=1}^\infty$ of radii with 
$s_2(\underline{\rho})\leq s$. Assume also that
there exists a constant $C$ such that, for every $k\in\N$ and every $x\in \spt \theta$,
\[
\theta(B(x,2r_k))\leq C\mu_{V_k}(B(x,r_k))
\] 
where $\mu_{V_k}:=\mu(V_k)^{-1}\mu_{|_{V_k}}$.
If there exists $t\in\mathopen]0,s\mathclose[$ such that 
\[
\sum_{k=1}^{\infty}\mu(V_k)r_k^t=\infty,
\]
then $f_{\mu}(\underline{r})\geq t$.
\end{lemma}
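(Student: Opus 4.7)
The plan is to bridge the i.i.d.\ $\mu$-samples and the reference measure $\theta$ via the conditional structure that appears along the random subsequence of indices $k$ for which $\omega_k\in V_k$: conditionally on that event, $\omega_k$ has distribution $\mu_{V_k}$, and the domination hypothesis then lets Lemma~\ref{lemma:EP9.5} combined with \eqref{s2assumption} produce the desired dimension lower bound.

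The first step is to show that the random set $K(\omega):=\{k\in\N:\omega_k\in V_k\}$ satisfies $\sum_{k\in K(\omega)}r_k^t=\infty$ almost surely, hence $s_2(\underline r_{K(\omega)})\geq t$. This follows from Lemma~\ref{lemma:sumconvergestoexpectationas} applied to the independent events $A_k:=\{\omega_k\in V_k\}$ with bounded weights $a_k:=r_k^t$ (bounded because $r_k\to 0$), after grouping $\N$ into finite blocks $I_n$ such that $E_n:=\sum_{k\in I_n}\mu(V_k)r_k^t\geq n^2$; such a grouping is possible exactly because $\sum_k\mu(V_k)r_k^t=\infty$. Since $t<s$, a standard thinning argument (sorting indices by dyadic shells of $r_k$ and retaining a controlled number of indices from each shell) then measurably extracts a sparser subsequence $K'(\omega)\subseteq K(\omega)$ with $s_2(\underline r_{K'(\omega)})\in[t,s]$ almost surely.

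Conditionally on $K'(\omega)$, the centres $(\omega_k)_{k\in K'(\omega)}$ are independent with $\omega_k\sim\mu_{V_k}$, and the inclusion $E_{\underline r}(\omega)\supseteq\limsup_{k\in K'(\omega)}B(\omega_k,r_k)$ gives $f_\mu(\underline r)\geq f_{(\mu_{V_k})_{k\in K'(\omega)}}(\underline r_{K'(\omega)})$ almost surely. Applying Lemma~\ref{lemma:EP9.5} with the constant sequence $(\theta)$ against $(\mu_{V_k})_{k\in K'(\omega)}$, using the given domination $\theta(B(x,2r_k))\leq C\mu_{V_k}(B(x,r_k))$ on $\spt\theta$, yields $f_\theta(\underline r_{K'(\omega)})\leq f_{(\mu_{V_k})_{k\in K'(\omega)}}(2\underline r_{K'(\omega)})$. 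Since $s_2(\underline r_{K'(\omega)})\leq s$, hypothesis \eqref{s2assumption} applies and gives $f_\theta(\underline r_{K'(\omega)})=s_2(\underline r_{K'(\omega)})\geq t$, so $f_{(\mu_{V_k})_{k\in K'(\omega)}}(2\underline r_{K'(\omega)})\geq t$.

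The main obstacle is then to absorb the factor of $2$ appearing on the radii on the right-hand side. I would argue that for limsup sets of balls with independent centres, the almost sure Hausdorff dimension is invariant under multiplicative rescaling of the radii, i.e.\ $f_{(\mu_{V_k})_{k\in K'(\omega)}}(c\,\underline r_{K'(\omega)})=f_{(\mu_{V_k})_{k\in K'(\omega)}}(\underline r_{K'(\omega)})$ for every $c>0$, almost surely. This is classical when the centres are i.i.d.\ from a single measure (via Kolmogorov's $0$--$1$ law combined with a Frostman-measure construction carried out at either scale), and I would adapt it to the non-identical sequence $(\mu_{V_k})$. Once this invariance is available the chain closes: $f_\mu(\underline r)\geq f_{(\mu_{V_k})_{k\in K'(\omega)}}(\underline r_{K'(\omega)})=f_{(\mu_{V_k})_{k\in K'(\omega)}}(2\underline r_{K'(\omega)})\geq t$.
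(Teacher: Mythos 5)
Your overall strategy is the paper's: decompose $\Pb$ over the Bernoulli configuration $\sigma_k:=\chi_{V_k}(\omega_k)$, use Lemma~\ref{lemma:sumconvergestoexpectationas} on the independent events $\{\omega_k\in V_k\}$ to get $\sum_{k:\sigma_k=1}r_k^t=\infty$ almost surely, and transfer the dimension bound from $\theta$ to the conditional laws $\mu_{V_k}$ via Lemma~\ref{lemma:EP9.5} and \eqref{s2assumption}. The gap is the very last step. You reduce the lemma to the claim that $f_{(\mu_{V_k})_{k\in K'}}(2\underline{r}_{K'})=f_{(\mu_{V_k})_{k\in K'}}(\underline{r}_{K'})$ and assert this scaling invariance is classical and adaptable. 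It is not: for a general, possibly non-doubling, sequence of measures no such invariance is available, and Lemma~\ref{lemma:EP9.5} applied with $\underline{\mu}=\underline{\nu}$ can only ever yield the inequality $f(\underline{\rho})\le f(2\underline{\rho})$, which is the trivial direction (the limsup set with doubled radii is larger); the nontrivial direction $f(2\underline{\rho})\le f(\underline{\rho})$ is exactly the kind of statement this whole paper shows to be delicate. Note that in Section~\ref{example:dimensionispackingspectrum} the lemma is applied to normalised restrictions of a purely atomic, highly non-doubling measure, so the claim cannot be waved away as a routine adaptation.

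The fix, which is the paper's trick, is to place the factor $2$ on the $\theta$-side rather than on the $\mu_{V_k}$-side: along the subsequence $\{\ell_n\}=\{k\mid\sigma_k=1\}$ set $\rho_n:=\frac12 r_{\ell_n}$, so that Lemma~\ref{lemma:EP9.5} gives directly $f_{(\mu^1_{\ell_n})}(2\underline{\rho})\ge f_{\theta}(\underline{\rho})$, where $E_{2\underline{\rho}}$ is exactly the limsup set you care about. The halving is then harmless because $s_2$ is insensitive to constant rescaling, so $s_2(\underline{\rho})=s_2((r_{\ell_n})_n)\ge t$, and \eqref{s2assumption} says $f_{\theta}$ depends only on $s_2$. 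For the same reason your thinning of $K(\omega)$ to force $s_2\le s$ is unnecessary (though not wrong): if $s_2(\underline{\rho})>s$ one replaces $\rho_n$ by $\tilde{\rho}_n:=\rho_n^{s_2(\underline{\rho})/s}\le\rho_n$ in the auxiliary $\theta$-problem only, getting $f_{\theta}(\underline{\rho})\ge f_{\theta}(\underline{\tilde{\rho}})=s>t$ without touching the random balls themselves. With these two changes your argument closes and coincides with the paper's proof.
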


\begin{proof}
The proof follows the same lines as that of \cite[Lemma 9.6]{EP2018}.

Write $p_k:=\mu(V_k)$, $\mu_k^0:=\mu_{V_k^c}$ and $\mu_k^1:=\mu_{V_k}$. For 
$\sigma\in \Sigma\coloneqq \{0,1 \}^{\N}$, let 
$\Pb_{\mu}^{\sigma}:=\prod_{k=1}^{\infty}\mu_k^{\sigma_k}$ on $\Omega=X^{\N}$ and define 
a probability measure $\Pb_{\underline{p}}$ on $\Sigma$ by setting 
\[
\Pb_{\underline{p}}:=\prod_{k=1}^{\infty}((1-p_k)\delta_0+p_k\delta_1).
\]
Then $\mu=(1-p_k)\mu_k^0+p_k\mu_k^1$ for every $k\in\N$. By \cite[(9.1)]{EP2018}, the 
probability measure $\Pb= \mu^{\N}$ on $\Omega$ can be decomposed into 
\[
\mu^{\N}=\int_\Sigma \Pb_{\mu}^{\sigma}\, d\Pb_{\underline{p}}(\sigma).
\]

Let $t\in\mathopen]0,s\mathclose[$ be such that $\sum_{k=1}^\infty p_kr_k^t=\infty$ and let 
$(N_n)_{n=0}^{\infty}$ be a strictly increasing sequence of natural numbers such that $N_0=0$
and
\[
E_n\coloneqq \sum_{k=N_{n-1}+1}^{N_n}p_kr_k^t\geq 2^n
\]
for every $n\geq 1$. Define measurable sets 
\[
G:=\{ \omega\in \Omega\mid \dimh E_{\underline{r}}(\omega)\geq t \}
\]
and
\[
A:=\Bigl\{ \sigma \in \Sigma\mid \text{ there exists }n_0\in \N \text{ such that } 
\sum_{k=N_{n-1}+1}^{N_n} \sigma_k r_k^t\geq\frac{E_n}2 \text{ for all }n\geq n_0 \Bigr\}.
\]
For $\sigma \in \Sigma$ and $\omega\in \Omega$, let 
\[
E_{\underline{r}}(\sigma,\omega):=\limsup_{\sigma_k=1} B(\omega_k,r_k)
\]
and, for $\sigma\in \Sigma$, define
\[
G_{\sigma}:=\{\omega\in \Omega\mid \dimh E_{\underline{r}}(\sigma,\omega)\geq t \}.
\]
Then $G_{\sigma}\subseteq G$ for every $\sigma\in\Sigma$, hence
\[
\mu^{\N}(G)=\int \Pb_{\mu}^{\sigma}(G)\, d\Pb_{\underline{p}}(\sigma)\geq \int_A 
\Pb_{\mu}^{\sigma}(G_{\sigma})\, d\Pb_{\underline{p}}(\sigma).
\]
Since $\sum_{n=1}^\infty E_n^{-1}<\infty$, we conclude, by applying 
Lemma~\ref{lemma:sumconvergestoexpectationas} with 
$(\widetilde\Omega,\widetilde\Pb)=(\Sigma,\Pb_{\underline p})$ and 
$A_n=\{\sigma\in\Sigma\mid \sigma_n=1\}$, that $\Pb_{\underline{p}}(A)=1$. Thus, to conclude 
that $\mu^{\N}(G)=1$, which implies that $f_\mu(\underline r)\geq t$, it suffices
to show that $\Pb_{\mu}^{\sigma}(G_{\sigma})=1$ for $\sigma\in A$.

Let $\sigma \in A$ and let $(\ell_n)_{n=1}^\infty$ be the strictly increasing enumeration of 
$\{\ell\mid \sigma_{\ell}=1 \}$. Define a sequence $\underline{\rho}$ by setting 
$\rho_n:=\frac 12r_{\ell_n}$ for all $n\in\N$. Then, since $\lim_{n\to\infty}E_n=\infty$ and 
\[
\sum_{k=N_{n-1}+1}^{N_n} \sigma_k r_k^t\geq \frac{E_n}2
\]
for all large $n\in\N$, we have, for large $n\in\N$, that 
\[
\sum_{k=1}^\infty \rho_k^t=\sum_{k=1}^\infty r_{\ell_k}^t
\geq \sum_{N_{n-1}+1\leq \ell_k\leq N_n}r_{\ell_k}^t
=\sum_{j=N_{n-1}+1}^{N_n} \sigma_jr_j^t\geq \frac{E_n}2\xrightarrow[n\to\infty]{} \infty.
\]
Hence, $\sum_{k=1}^\infty \rho_k^t=\infty$, implying that $s_2(\underline\rho)\ge t$. If 
$s_2(\underline\rho)\le s$, we have that 
$f_{\theta}(\underline{\rho})=s_2(\underline{\rho})$ by \eqref{s2assumption}. If
$s_2(\underline\rho)>s$, consider the sequence 
$\underline{\tilde\rho}=(\tilde\rho_n)_{n=1}^\infty$ where 
\[
\tilde\rho_n:=\rho_n^{\frac{s_2(\underline\rho)}s}<\rho_n.
\]
Now $s_2(\underline{\tilde\rho}) =s$, so 
$f_\theta(\underline\rho)\ge f_\theta(\underline{\tilde\rho})=s$ by \eqref{s2assumption}. 
Thus,
\[
f_{\theta} (\underline{\rho})\geq \min \{s, s_2(\underline{\rho}) \}\geq t.
\]
Define $\pi\colon \Omega \to \Omega$ by setting $\pi(\omega)_n:=\omega_{\ell_n}$. Then
\[
\pi_\ast\Pb_{\mu}^{\sigma}=\prod_{n=1}^{\infty}\mu_{\ell_n}^{1}
\]
and
\[
E_{2\underline{\rho}}(\pi(\omega))=E_{\underline{r}}(\sigma,\omega).
\]
Applying Lemma~\ref{lemma:EP9.5} with $\mu_k=\theta$ and $\nu_k=\mu_{\ell_k}^1$ for all
$k\in\N$, we have for $\Pb_{\mu}^{\sigma}$-almost every $\omega\in \Omega$ that
\[
\dimh E_{\underline{r}}(\sigma,\omega)= \dimh E_{2\underline{\rho}}(\pi(\omega))
=f_{(\mu_{\ell_n}^1)_{n=1}^{\infty}}(2\underline{\rho})\geq f_{\theta}(\underline{\rho})
\geq t.
\]
Thus $\Pb_{\mu}^{\sigma}(G_{\sigma})=1$.
\end{proof}

\subsection{A measure $\muOne$ which realises the lower bound in 
            \eqref{bounds-fmu}}\label{example:dimensionishausdorffspectrum}

The following example shows that it is possible that 
$f_{\mu}(\alpha)=\overline{F}_{\mu}(\frac 1\alpha)<\overline{H}_{\mu}(\frac 1\alpha)$,
implying the sharpness of the lower bound in 
Theorem~\ref{theorem:dimhatleastconjecture}.

Fix $0<s<u<1$ and let $0<a<b<\frac{1}{2}$ be such that
$s=\frac{\log 2}{-\log a}$ and $u=\frac{\log 2}{-\log b}$, i.e., $2=a^{-s} = b^{-u}$. Let
$(N_k)_{k=1}^\infty$ be a rapidly growing sequence of integers and let
$C:=\bigcap_{n=0}^{\infty}C_n$ be the Cantor set, where $C_{n}$ is obtained from
$C_{n-1}$ by removing the middle $(1-2a)$-interval from each
construction interval of $C_{n-1}$ for $N_{2k}\leq n< N_{2k+1}$ and by removing
the middle $(1-2b)$-interval from each construction interval of
$C_{n-1}$ for $N_{2k+1}\leq n<N_{2k+2}$. If the
sequence $(N_k)_{k=1}^\infty$ grows fast enough, then $\dimh C=s$ and $\dimp C=u$. 

For each $k=0,1,2,\ldots$, let $\mu_k$ be the sum of $2^k$ Dirac masses located at the
midpoints of the gaps which appear when $C_{k+1}$ is constructed from $C_k$. Let $\beta>1$ 
and define a measure $\muOne$ by setting
\[
\muOne:=\frac{\sum_{k=0}^{\infty}2^{-\beta k}\mu_k}{\sum_{k=0}^{\infty}2^{(1-\beta)k}}
=(1-2^{1-\beta})\sum_{k=0}^{\infty}2^{-\beta k}\mu_k.
\]
Note that $\spt \muOne = C\cup\bigl(\bigcup_{k=0}^{\infty}\spt \mu_k\bigr)$ and 
$\dimhu (\muOne)=0$. Clearly, if $x\not \in \spt \muOne$,
then $\dimloc\muOne(x)=\infty$. We then investigate the local dimensions of 
$\muOne$ on its support.

\begin{lemma}\label{dim-local-xi}
If $x\in \spt \mu_k$ for some $k\in\N$, then $\dimloc\muOne(x)=0$. For every
$x\in C$, $\dimlocl \muOne(x)=\beta s$ and $\dimlocu \muOne(x)=\beta u$.  
\end{lemma}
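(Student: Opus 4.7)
The first claim is immediate from atomicity: if $x\in\spt\mu_k$ for some $k$, then $x$ is one of the $2^k$ Dirac atoms comprising $\mu_k$, so $\muOne(\{x\})\ge(1-2^{1-\beta})2^{-\beta k}>0$; since $\muOne(B(x,r))\ge\muOne(\{x\})$ is bounded below independently of $r$, the ratio $\log\muOne(B(x,r))/\log r$ tends to $0$ as $r\to 0$, whence $\dimloc\muOne(x)=0$. The substance of the lemma concerns $x\in C$, which I plan to handle in three steps.

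First, I would compute the $\muOne$-mass of the construction interval $I_n(x)$ of $C_n$ containing $x$ exactly. Since the atoms of $\mu_k$ are midpoints of gaps created at step $k+1$, they lie in $C_k\setminus C_{k+1}$, so $\mu_k(I_n(x))=0$ for every $k<n$ (because $I_n(x)\subseteq C_n\subseteq C_{k+1}$). For $k\ge n$, $I_n(x)$ contains exactly $2^{k-n}$ construction intervals of $C_k$, each contributing one atom to $\mu_k$, so $\mu_k(I_n(x))=2^{k-n}$. The normalising constant $1-2^{1-\beta}$ has been chosen so that the geometric series telescopes cleanly:
\[
\muOne(I_n(x))=(1-2^{1-\beta})\sum_{k=n}^{\infty}2^{-\beta k}\cdot 2^{k-n}=(1-2^{1-\beta})\cdot 2^{-n}\cdot\frac{2^{(1-\beta)n}}{1-2^{1-\beta}}=2^{-\beta n}.
\]

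Second, I would transfer this estimate to balls. Given small $r>0$, let $n=n(r)$ be the unique integer with $\ell_{n+1}\le r<\ell_n$, where $\ell_n$ denotes the common length of $C_n$-intervals. The lower bound $\muOne(B(x,r))\ge\muOne(I_{n+1}(x))=2^{-\beta(n+1)}$ is immediate since $I_{n+1}(x)$ contains $x$ and has diameter $\ell_{n+1}\le r$. For the matching upper bound, the hypothesis $a,b<\frac 12$ supplies a constant $c>0$ such that at every level $k$ the gap between adjacent $C_k$-intervals has length at least $c\ell_k$ and the atom of $\mu_k$ in $I_k(x)$ lies at distance at least $c\ell_{k+1}$ from $x$. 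Consequently (i) $B(x,r)$ meets only $O(1)$ intervals of $C_n$, (ii) for each $k\ge n$ it contains at most $O(2^{k-n})$ atoms of $\mu_k$, and (iii) the only atoms of $\mu_k$ at levels $k<n$ that can lie in $B(x,r)$ come from levels with $\ell_{k+1}\le r/c$, which differ from $n$ by a bounded additive constant. Summing as in the first step yields $\muOne(B(x,r))\le C\cdot 2^{-\beta n}$ with $C=C(a,b,\beta)$; hence
\[
\frac{\log\muOne(B(x,r))}{\log r}=\frac{\beta n\log 2+O(1)}{-\log r},\qquad -\log r\in(-\log\ell_n,-\log\ell_{n+1}].
\]

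Third, I would analyse the sequence $\ell_n$. Let $n_a(n)$ and $n_b(n)$ be the numbers of $a$- and $b$-contractions used in the first $n$ levels, so that $\ell_n=a^{n_a(n)}b^{n_b(n)}$ with $n_a(n)+n_b(n)=n$. Then
\[
\frac{n\log 2}{-\log\ell_n}=\frac{\log 2}{\frac{n_a(n)}{n}\log\frac{1}{a}+\frac{n_b(n)}{n}\log\frac{1}{b}}\in[s,u],
\]
a continuous function of $n_a(n)/n\in[0,1]$ attaining the endpoints $s$ at $1$ and $u$ at $0$. The hypothesis that $(N_k)$ grows fast enough (already invoked to secure $\dimh C=s$ and $\dimp C=u$) ensures that along $n=N_{2k+1}$ the current $a$-phase $[N_{2k},N_{2k+1})$ dominates the entire prior history and $n_a(n)/n\to 1$, and symmetrically $n_b(n)/n\to 1$ along $n=N_{2k+2}$. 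Combined with the preceding display, this gives $\dimlocl\muOne(x)=\beta s$ and $\dimlocu\muOne(x)=\beta u$. The main obstacle is the upper bound on $\muOne(B(x,r))$ in the second step: controlling the atoms of $\mu_k$ with $k<n$ that can lie inside $B(x,r)$ requires the gap estimate coming from $a,b<\frac 12$, without which the extra contributions could exceed the desired size $O(2^{-\beta n})$.
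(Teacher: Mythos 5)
Your proposal is correct and follows essentially the same route as the paper: count the atoms of each $\mu_k$ in $B(x,r)$ level by level (using $\beta>1$ to sum the geometric series and the gap-size constant $m_0$ to control levels $k<n$) to get $\muOne(B(x,r))\approx 2^{-\beta n}$ for $\ell_{n+1}\le r<\ell_n$, and then exploit the oscillation of $\ell_n$ between $a^n$-like and $b^n$-like behaviour forced by the rapid growth of $(N_k)$. Your exact telescoping computation of $\muOne(I_n(x))=2^{-\beta n}$ and the explicit treatment of the nearby lower-level atoms are just slightly more detailed versions of what the paper does.
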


\begin{proof}
The first claim is immediate. Let $\ell_n$ denote the length of the level $n$
construction intervals of $C$. Then $a^n\leq \ell_n \leq b^n$ and 
$a\ell_n\leq \ell_{n+1}\leq b \ell_n$ for every $n\in\N$. Also, since the sequence 
$(N_n)_{n=1}^\infty$ is rapidly growing, for every $\varepsilon >0$, 
there are infinitely many $n$ such that $a^n\leq \ell_n  \leq a^{n(1-\ep)}$ 
and there are infinitely many $n$ such that $b^{n(1+\ep)} \leq \ell_n\leq b^n$.

For $x\in C$, denote by $C_n(x)$ the construction 
interval at level $n$ containing $x$. If $x\in C$ and $r\in\mathopen]0,1\mathclose[$ is such 
that $\ell_{n+1}\leq r< \ell_n$, then $C_{n+1}(x)\subseteq B(x,r)$ and 
$B(x,r)\cap \spt \mu_k=\emptyset$ for $k\leq n-m_0$, where $b^{m_0}<\frac 12(1-2b)$. Thus, 
$\mu_{n+m}(B(x,r))=0$ for $m\leq -m_0$ and $\mu_{n+m}(B(x,r))\approx 2^m$ for $m\geq 1$.
This implies that (recalling that $\beta>1$) 
\[
\muOne(B(x,r))\approx \sum_{m=0}^{\infty}2^{-\beta(n+m)+m}\approx 2^{-\beta n}.
\]
Since $r\approx\ell_n$ and $\ell_n$ can be of the same order of either $a^n$ or $b^n$ for 
suitable integers $n$, we  conclude that $\dimlocl \muOne(x)=\beta s$ and 
$\dimlocu \muOne(x)=\beta u$. 
\end{proof}

Given that $\dimh (\spt \mu_k) =0$, $\dimh C=s$ and $\dimp C=u$, Lemma~\ref{dim-local-xi} 
directly gives  
\begin{align*}
    \overline{F}_{\muOne}(t)=\begin{cases}
        0& \text{ if }t<\beta s-s, \\
        t-(\beta s-s)& \text{ if } \beta s-s\leq t < \beta s,\\
        s&  \text{ if } \beta s \leq t.
    \end{cases}
\end{align*}
Also, if $\beta s\le u$, we have that 
\begin{align*}
    \overline{H}_{\muOne}(t)=\begin{cases}
        t-(\beta s-u)& \text{ if }0\leq t<\beta s, \\
        u& \text{ if } \beta s\leq t,
    \end{cases}
\end{align*}
and if $u<\beta s$, we obtain that
\begin{align*}
    \overline{H}_{\muOne}(t)=\begin{cases}
        0& \text{ if }0\le t<\beta s-u, \\
        t-(\beta s-u)& \text{ if } \beta s-u\leq t < \beta s,\\
        u&  \text{ if } \beta s \leq t.
    \end{cases}
\end{align*}

Figure~\ref{figure:lipschitzhullofspectrums} shows the graphs of $\overline{F}_{\muOne}$ and 
$\overline{H}_{\muOne}$ in the case $1<\beta<\frac us$. In this case, 
$\overline{H}_{\muOne}(\frac 1\alpha)$ is a worse upper bound for $f_{\muOne}(\alpha)$ than 
the trivial upper bound $\frac 1\alpha$.

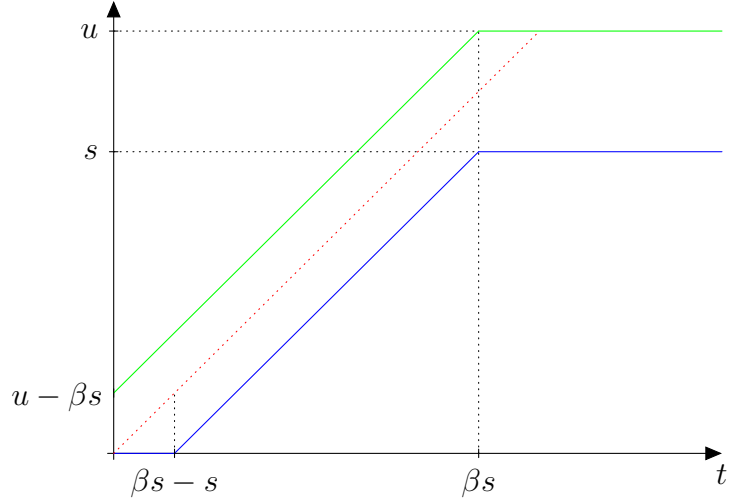
\begin{figure}[ht!]
\begin{center}
        \begin{tikzpicture}[line cap=round, line join=round, >=triangle 45,
                            x=1.0cm, y=1.0cm, scale=0.8]

          \draw [->,color=black] (-0.1,0) -- (10,0);
          \draw [->,color=black] (0,-0.1) -- (0,7.5);
          \draw [color=blue] (0,0) -- (1,0);
          \draw [color=blue] (1,0) -- (6,5);
          \draw [color=blue] (6,5) -- (10,5);

           \draw [color=green] (0,1) -- (6,7);
          \draw [color=green] (6,7) -- (10,7);

          \draw (10,0) node [below] {$t$};

          \draw[shift={(1,0)}] (0pt, 2pt) -- (0pt,-2pt) node [below] {$\beta s-s$};
           \draw[shift={(0,1)}] (0pt, 2pt) -- (0pt,-2pt) node [left] {$u-\beta s$};
          \draw[shift={(6,0)}] (0pt, 2pt) -- (0pt,-2pt) node [below] {$\beta s$};
         
          \draw[shift={(0,5)}] (2pt, 0pt) -- (-2pt,0pt) node [left] {$s$};
          \draw[shift={(0,7)}] (2pt, 0pt) -- (-2pt,0pt) node [left] {$u$};
          
          \draw[dotted] (1,0) -- (1,1);
          \draw[dotted] (0,5) -- (6, 5);
          \draw[dotted] (6,0) -- (6,7);
          \draw[dotted] (0,7) -- (6,7);
           \draw[dotted][color=red] (0,0) -- (7,7);
        \end{tikzpicture}
\end{center}
\caption{Graph of the functions $\overline{F}_{\muOne}$ and $\overline{H}_{\muOne}$ highlighted in blue and green, respectively. The diagonal is depicted in red.}
\label{figure:lipschitzhullofspectrums}
\end{figure}

Note that since $\dimhu({\muOne})=0$, the equality 
$f_{\muOne}(\alpha)=\overline{F}_{\muOne}(\frac 1\alpha)= \frac{1}{\alpha}$ never occurs. 

\begin{lemma}
For every $\alpha$ with $ \frac1\alpha<\beta s$, we have that
$f_{\muOne}(\alpha)= \overline{F}_{\muOne}(\frac 1\alpha)$.
\end{lemma}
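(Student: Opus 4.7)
The lower bound $f_{\muOne}(\alpha) \ge \overline{F}_{\muOne}(\tfrac 1\alpha)$ is immediate from Theorem~\ref{theorem:dimhatleastconjecture} (and is trivial when $\overline{F}_{\muOne}(\tfrac 1\alpha) = 0$), so the work lies in proving the matching upper bound. In this regime both the trivial bound $\tfrac 1\alpha$ and the Ekstr\"om--Persson bound $\overline{H}_{\muOne}(\tfrac 1\alpha)$ strictly exceed $\overline{F}_{\muOne}(\tfrac 1\alpha)$, so I plan to exploit the hitting operator framework of Section~\ref{section:hittingprob}.

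Setting $V_{n} := B(0, n^{-\alpha})$, I apply Proposition~\ref{lemma:necessaryoconditionfordimensionofintersection} to the compact set $C$. Since $\muOne$ is purely atomic on $\bigcup_{k} \spt \mu_{k}$ with $\muOne(C) = 0$ and the radii shrink to $0$, almost surely any point of $E_{\alpha}(\omega)$ is a limit of centres, hence lies in $C \cup \bigcup_{k} \spt \mu_{k}$; as the atomic part is countable, it suffices to bound $\dimh(E_{\alpha}(\omega) \cap C)$. By Proposition~\ref{lemma:necessaryoconditionfordimensionofintersection}, the condition $\JmuVt(C) = \emptyset$ already ensures $\dimh(E_{\alpha}(\omega) \cap C) \le t$ almost surely, since then the transfinite iteration of $\JmuVt$ collapses to $\emptyset$ after one step. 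The problem thus reduces to identifying the sharp $t$ for which $\JmuVt(C)$ is empty.

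The central computation is the mass estimate, for $x \in C$ and $r > 0$:
\[
\muOne\bigl(B(x, r) \cap C(n^{-\alpha})\bigr) \asymp 2^{-k(r)} \cdot 2^{(1-\beta) j^{\ast}(n)},
\]
where $k(r)$ is the level with $\ell_{k(r)+1} \le r \le \ell_{k(r)}$ and $j^{\ast}(n)$ is the level with $\ell_{j^{\ast}(n)} \approx n^{-\alpha}$. The two ingredients are: atoms of $\mu_{j}$ lie at distance comparable to $\ell_{j}$ from $C$, so belong to $C(n^{-\alpha})$ only when $j \ge j^{\ast}(n)$; and the number of atoms of $\mu_{j}$ in $B(x, r)$ with $j \ge k(r)$ is of order $2^{j-k(r)}$, so the weighted sum $\sum_{j} 2^{-\beta j} \cdot 2^{j-k(r)}$ is geometric with ratio $2^{1-\beta} < 1$ and is dominated by its lowest admissible index. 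Grouping the $n$'s by $j^{\ast}(n) = j$ and writing $\ell_{j} = 2^{-j d_{j}}$ with $d_{j} \in [1/u, 1/s]$ encoding the phase history of $C$, the sum $\sum_{n} \muOne(B(x,r) \cap C(n^{-\alpha})) \, n^{-\alpha t}$ reduces (up to $r$-dependent constants) to $\sum_{j} 2^{j[d_{j}(1/\alpha - t) + 1 - \beta]}$.

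Because $(N_{k})$ is rapidly increasing, values of $d_{j}$ arbitrarily close to $1/s$ arise at the ends of long $a$-phases, so convergence is dictated by $d_{j} = 1/s$: the sum is finite iff $(1/\alpha - t)/s + 1 - \beta < 0$, i.e.\ $t > \tfrac 1\alpha + s - \beta s = \overline{F}_{\muOne}(\tfrac 1\alpha)$. The principal obstacle is to make this estimate uniform in $x \in C$ and in $r$, and to verify that the $a$-phase rather than the $b$-phase dictates the threshold (ruling out the weaker exponent $\tfrac 1\alpha + u - \beta u$): at the end of a long $a$-phase the maximum exponent $(1/\alpha - t)/s + 1 - \beta$ is essentially attained, and if it were positive, the partial sum at level $N_{2k+1}-1$ would be astronomically large since $N_{2k+1}$ is. Once this is established, $\JmuVt(C) = \emptyset$ for every $t > \overline{F}_{\muOne}(\tfrac 1\alpha)$, yielding $\dimh(E_{\alpha}(\omega) \cap C) \le \overline{F}_{\muOne}(\tfrac 1\alpha)$ almost surely; the borderline regime $\tfrac 1\alpha \le \beta s - s$ is handled identically with $t = \varepsilon$ arbitrarily small, giving $\dimh E_{\alpha}(\omega) = 0$.
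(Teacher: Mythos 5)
Your proposal is correct, but it routes the upper bound through the hitting-operator machinery, whereas the paper argues more directly. You reduce to bounding $\dimh(E_{\alpha}(\omega)\cap C)$ (the complement of $C$ in $\spt\muOne$ being countable, exactly as in the paper), and then show $\mathcal{J}^{\muOne}_{\underline{V},t}(C)=\emptyset$ for every $t>\frac1\alpha+s-\beta s$ via the local estimate $\muOne\bigl(B(x,r)\cap C(n^{-\alpha})\bigr)\lesssim 2^{-k(r)}2^{(1-\beta)j^*(n)}$, after which \eqref{equation:dimofcomplementofJ1lessthant} (or Proposition~\ref{lemma:necessaryoconditionfordimensionofintersection} with $\beta_t=1$) gives the dimension bound. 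The paper instead runs the first-moment Hausdorff-content computation directly on the balls $B(\omega_k,k^{-\alpha})$ with $\omega_k\in C(k^{-\alpha})$, using only the global estimate $\muOne(C(r))\lesssim r^{\beta s-s}$; this is the same mechanism (Proposition~\ref{lemma:necessaryoconditionfordimensionofintersection} is itself proved by such a first-moment argument), but it avoids the atom-counting needed for your localised bound and the uniformity-in-$x$ discussion, so it is shorter. Your local computation is nevertheless correct: the atoms of $\mu_j$ sit at distance $\asymp\ell_j$ from $C$ and a ball $B(x,r)$ with $x\in C$ meets $\asymp 2^{j-k(r)}$ of them, so the grouped sum $\sum_j 2^{j[d_j(1/\alpha-t)+1-\beta]}$ is what you claim. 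One small conceptual point: the "principal obstacle" you identify --- verifying that the $a$-phase rather than the $b$-phase dictates the threshold --- is not actually needed for the upper bound. Since $\ell_j\ge a^j$ always, one has $d_j\le 1/s$ for every $j$, so the exponent $d_j(\frac1\alpha-t)+1-\beta$ is uniformly bounded by the negative quantity $(\frac1\alpha-t)/s+1-\beta$ whenever $t>\frac1\alpha+s-\beta s$ (and by $1-\beta<0$ if $t\ge\frac1\alpha$); the phase structure and the rapid growth of $(N_k)$ only matter for the divergence (optimality) direction, which is already supplied by the lower bound from Theorem~\ref{theorem:dimhatleastconjecture}.
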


\begin{proof}
Let $\alpha>0$ be such that $\frac 1\alpha <\beta s$. By 
Theorem~\ref{theorem:dimhatleastconjecture}, it is enough to prove that 
$f_{\muOne}(\alpha) \leq \overline{F}_{\muOne}(\frac 1\alpha)$.
Note that 
\[
\limsup_{k\to\infty}B(\omega_k,k^{-\alpha} )=\Bigl(\limsup_{\omega_k\in C(k^{-\alpha} )}
B(\omega_k,k^{-\alpha} )\Bigr)
\bigcup\Bigl(\limsup_{\omega_k\not\in C(k^{-\alpha} )}B(\omega_k,k^{-\alpha} )\Bigr).
\]
Since $C\cap B(x,r)=\emptyset$ if $x\not \in C(r)$, we have that
\[
\limsup_{\omega_k\not\in C(k^{-\alpha} )}B(\omega_k,k^{-\alpha} )\subseteq
\bigcup_{j=0}^\infty\spt\mu_j
\]
which is countable. Thus, 
\[
\dimh E_{\alpha}(\omega)=\dimh\Bigl( \limsup_{\omega_k\in C(k^{-\alpha} )}
B(\omega_k,k^{-\alpha} )\Bigr)
\]
for every $\omega\in\Omega$.  
 
Let us prove that, for every $t>\frac 1\alpha+s-\beta s$, 
\[
\mc{H}_{\infty}^t\Bigl(\limsup_{\omega_k\in C(k^{-\alpha} )}B(\omega_k,k^{-\alpha} )\Bigr)
=0.
\]

For this, observe that, if $r\in\mathopen]0,1\mathclose[$ is such that 
$\ell_{n+1}\leq r<\ell_n$ then 
\[
\muOne(C(r))\approx 2^{-\beta n}2^n=\ell_n^{n(\beta-1)\frac{\log 2}{-\log \ell_n}}\approx 
r^{n(\beta-1)\frac{\log 2}{-\log \ell_n}}.
\]
Since $\ell_n\ge a^n$ for all $n\in\N$, we have that $\muOne(C(r))\lesssim r^{\beta s-s}$ for 
every $0<r<1$. Thus, if $\frac 1\alpha<\beta s-s$, then 
\[
\sum_{k=1}^\infty \Pb(\{ \omega_k \in C(k^{-\alpha} ) \})=\sum_{k=1}^\infty 
\muOne(C(k^{-\alpha} ))\lesssim \sum_{k=1}^\infty k^{-\alpha(\beta s-s)}< \infty,
\]
whence, by the Borel--Cantelli lemma,
\[
\Pb(\{ \omega_k \in C(k^{-\alpha} ) \text{ for infinitely many } k \})=0.
\]
This implies that $C\cap E_{\alpha}(\omega)=\emptyset$ almost surely and, thus, 
$\dimh E_{\alpha}(\omega)=0$ almost surely. Furthermore, if $\beta s-s<\frac 1\alpha<\beta s$ 
and $t>\frac 1\alpha+s-\beta s$, then  
\begin{align*}
\E\Bigl(\sum_{k=1}^\infty\mc{H}_{\infty}^t\bigl(\chi_{C(k^{-\alpha} )}
(\omega_k)B(\omega_k,k^{-\alpha} )\bigr)\Bigr) &\leq 2^t\sum_{k=1}^\infty 
\muOne(C(k^{-\alpha} ))(k^{-\alpha} )^t\\
&\lesssim \sum_{k=1}^\infty k^{-\alpha(t+\beta s-s)}
<\infty.
\end{align*}
Thus, 
$\sum_{k=1}^\infty\mc{H}_{\infty}^t\bigl(\chi_{C(k^{-\alpha} )}(\omega_k)
B(\omega_k,k^{-\alpha} )\bigr)<\infty$ 
almost surely, implying that, almost surely for any $n\in\N$,
\begin{align*}
\mc{H}_{\infty}^t\Bigl(\limsup_{\omega_k\in C(k^{-\alpha} )}B(\omega_k,k^{-\alpha} )\Bigr)\leq 
\sum_{{k=n}}^\infty\mc{H}_{\infty}^t\bigl(\chi_{C(k^{-\alpha} )}(\omega_k)
B(\omega_k,k^{-\alpha} )\bigr){\xrightarrow[n\to\infty]{}0}.
\end{align*}
Therefore, 
$\dimh\bigl(\limsup_{\omega_k\in C(k^{-\alpha} )}B(\omega_k,k^{-\alpha} )\bigr)\leq t$ almost 
surely. This holds true for every $t>\frac 1\alpha+s-\beta s$, and we 
conclude that 
$f_{\muOne}(\alpha)\leq\frac 1\alpha +s-\beta s=\overline{F}_{\muOne}(\frac 1\alpha)$.
\end{proof}

\subsection{A measure $\muTwo$ which realises the upper bound in \eqref{bounds-fmu}}
\label{example:dimensionispackingspectrum}

The following example demonstrates that it is possible that 
$f_{\mu}(\alpha)=\overline{H}_{\mu}(\frac 1\alpha)>\overline{F}_{\mu}(\frac 1\alpha)$
implying that strict inequality in Theorem~\ref{theorem:dimhatleastconjecture} is possible,
hence showing that Conjecture~\ref{conjecture:EP} is not always true.
 
Let $s$, $u$, $a$, $b$, $C=\bigcap_{n=0}^\infty C_n$, $\theta$ and $\mu_k$ for $k=0,1,\dots$ 
be as in Section~\ref{example:dimensionishausdorffspectrum}. Let $\theta$ be the natural 
Borel probability measure on $C$ giving equal weight to all construction
intervals at same level. We then have that 
\begin{equation}\label{equation:estimatesforcantormeasure}
r^{u}\lesssim \theta(B(x,r)) \lesssim r^{s}    
\end{equation}
for all $x\in C$ and $0<r<1$, where the notation $g(r)\lesssim h(r)$
means that there exists a constant $D$ such that $g(r)\le Dh(r)$. If the
sequence $(N_k)_{k=1}^\infty$ grows fast enough, then $\dimlocl \theta(x)=s$ and
$\dimlocu \theta(x)=u$ for all $x\in C$.  

Let 
\[
\beta>\frac us>1
\]
and define a measure $\muTwo$ by setting
\[
\muTwo:=\gamma_0\sum_{k=0}^{\infty}2^{\beta \frac{\log \ell_k}{-\log a}}\mu_k,
\]
where
\[
\gamma_0\coloneqq \left( \sum_{k=0}^{\infty} 2^{\beta \frac{\log \ell_k}{-\log a}+k} 
\right)^{-1}.
\]
Observe first that since $\frac{\log b}{\log a}= \frac{s}{u}>\frac{1}{\beta}$ and
\[
k\log a\leq \log \ell_k\leq k\log b,
\]
we obtain that
\[
\sum_{k=0}^{\infty} 2^{\beta \frac{\log \ell_k}{-\log a}+k}\leq \sum_{k=0}^{\infty}
2^{k(1-\beta \frac{s}{u})}=:D_0<\infty.
\]
Thus, $\gamma_0 \in\mathopen]0,\infty\mathclose[$.
Note that $\spt \muTwo = C\cup (\bigcup_{k=0}^{\infty}\spt \mu_k)$.

As for the measure $\muOne$, if $x\not \in \spt \muTwo$ then  
$\dimloc\muTwo(x)=\infty$, if $x\in \spt \mu_k$ for some $k\in\N$ then 
$\dimloc\muTwo(x)=0$ and, finally, $\dimhu\muTwo =0$.
The difference with $\muOne$ is that the local dimensions of $\muTwo$ exist on $C$.

\begin{lemma}
For every $x\in C$, we have that $\dimlocl \muTwo(x)=\beta s$.
\end{lemma}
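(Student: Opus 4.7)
The plan is to prove the stronger two-sided bound $\muTwo(B(x,r))\asymp\ell_n^{\beta s}$ for all small $r$ with $\ell_{n+1}\le r<\ell_n$, where $n=n(r)$, and then deduce $\dimlocl\muTwo(x)=\beta s$ (in fact, the local dimension will exist). The strategy mirrors that of Lemma~4.1 for $\muOne$, but the weights $2^{\beta\log\ell_k/(-\log a)}=\ell_k^{\beta s}$ are now perfectly synchronised with the geometry of $C$ so that the liminf and limsup of the local dimension coincide.

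For the upper bound, exactly as in the proof of Lemma~4.1, there is a constant $m_0=m_0(a,b)$ such that $\mu_k(B(x,r))=0$ for $k\le n-m_0$ (the nearest midpoint of a level-$(k+1)$ gap sits at distance at least a fixed fraction of $\ell_k$ from any point of $C$). The finitely many indices $k\in[n-m_0,n]$ contribute at most $O(\ell_n^{\beta s})$ since $\ell_k^{\beta s}\lesssim\ell_n^{\beta s}$ and $\mu_k(B(x,r))$ is bounded. For $k>n$, $B(x,r)\subset B(x,\ell_n)$ meets only $O(1)$ level-$n$ intervals, each containing $2^{k-n}$ level-$k$ descendants, so $\mu_k(B(x,r))\lesssim 2^{k-n}$. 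Combined with $\ell_k^{\beta s}\le b^{(k-n)\beta s}\ell_n^{\beta s}$, the tail contributes $\lesssim\ell_n^{\beta s}\sum_{j\ge1}(2b^{\beta s})^j$. This is the unique place where the hypothesis $\beta>u/s$ enters: since $b=2^{-1/u}$, one has $2b^{\beta s}=2^{1-\beta s/u}<1$, and the geometric series converges.

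The matching lower bound needs only one term of the sum. Since $r\ge\ell_{n+1}$ and $x\in C_{n+1}(x)$, the ball $B(x,r)$ contains the whole interval $C_{n+1}(x)$ and in particular the midpoint of the level-$(n+2)$ gap it contains, which carries a Dirac mass of $\mu_{n+1}$. Hence $\mu_{n+1}(B(x,r))\ge 1$, and using $\ell_{n+1}\ge a\ell_n$,
\[
\muTwo(B(x,r))\ge\gamma_0\,\ell_{n+1}^{\beta s}\,\mu_{n+1}(B(x,r))\gtrsim\ell_n^{\beta s}.
\]

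Finally, taking logarithms in $\muTwo(B(x,r))\asymp\ell_n^{\beta s}$ yields $\log\muTwo(B(x,r))=\beta s\log\ell_n+O(1)$. Dividing by $\log r$, the $O(1)/\log r$ term vanishes as $r\to 0$, so it remains to verify $\log\ell_n/\log r\to 1$. Since $\log r\in[\log\ell_{n+1},\log\ell_n)$, one has $\log\ell_n/\log r\in(\log\ell_n/\log\ell_{n+1},1]$, and writing $\log\ell_{n+1}=\log\ell_n+\log q_n$ with $q_n\in\{a,b\}$ gives $\log\ell_n/\log\ell_{n+1}=(1+\log q_n/\log\ell_n)^{-1}\to 1$ as $|\log\ell_n|\to\infty$. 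Therefore $\lim_{r\to 0}\log\muTwo(B(x,r))/\log r=\beta s$, which in particular gives $\dimlocl\muTwo(x)=\beta s$. The sole genuine obstacle is the convergence of the upper-tail geometric series, and the assumption $\beta>u/s$ is tailored precisely for this; the rest is a careful tracking of constants in the vein of Lemma~4.1.
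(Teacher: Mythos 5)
Your proof is correct and follows essentially the same route as the paper's: the lower bound comes from the single Dirac mass of $\mu_{n+1}$ inside $C_{n+1}(x)\subseteq B(x,r)$, and the upper bound from localising $B(x,r)\cap\spt\muTwo$ to $O(1)$ level-$(n-m_0)$ intervals and summing the geometric series whose convergence is exactly where $\beta>u/s$ (equivalently $2^{1-\beta s/u}<1$) enters, just as in the paper's estimate \eqref{equation:allweightscomparabletolargest}. The only difference is organisational (you bound $\mu_k(B(x,r))\lesssim 2^{k-n}$ term by term rather than first isolating the summation identity), and your concluding observation that the full local dimension exists matches the paper's remark preceding the lemma.
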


\begin{proof}
Let us first  show that, for any $n\in\N$, 
\begin{equation}\label{equation:allweightscomparabletolargest}
\sum_{m=0}^{\infty}2^{-\beta \frac{\log \ell_{n+m}}{\log a}+m}
\approx 2^{-\beta \frac{\log \ell_n}{\log a}}.
\end{equation}
The inequality $"\geq"$ in \eqref{equation:allweightscomparabletolargest} is clear. To see 
the inequality $"\lesssim"$, observe that $\ell_{n+m}\leq \ell_n b^m$ for all $n,m\in\N$,
hence 
\[
\frac{\log \ell_{n+m}}{\log a}\geq \frac{\log\ell_n+m\log b}{\log a}
=\frac{\log \ell_{n}}{\log a}+m\frac{s}{u}.
\]
This yields
\[
\sum_{m=0}^{\infty}2^{-\beta \frac{\log \ell_{n+m}}{\log a}+m}
\leq 2^{-\beta \frac{\log \ell_{n}}{\log a}}\sum_{m=0}^{\infty}2^{m(1-\beta \frac{s}{u})}
=D_02^{-\beta \frac{\log \ell_{n}}{\log a}}.
 \]
 
Suppose that $x\in C$ and $r\in\mathopen]0,1\mathclose[$ is such that 
$\ell_{n+1}\leq r< \ell_n$. Then $C_{n+1}(x)\subseteq B(x,r)$, giving 
$\mu_{n+1}(B(x,r))\geq 1$ and 
\[
\muTwo(B(x,r))\gtrsim 2^{\beta \frac{\log \ell_{n+1}}{-\log a}}=\ell_{n+1}^{\beta s} 
\approx r^{\beta s}.
\]
On the other hand, as in Section~\ref{example:dimensionishausdorffspectrum}, there is 
$m_0\in\N$ such that $B(x,r)\cap \spt \mu_k=\emptyset$ for $k\leq n-m_0$. This implies that
$B(x,r)\cap \spt \muTwo \subseteq C_{n-m_0+1}(x)$ and, thus, for $m=0,1,2,\ldots$,
\[
\mu_{n-m_0+1+m}(B(x,r))\leq 2^m.
\]
Hence, by \eqref{equation:allweightscomparabletolargest},
\[
\muTwo(B(x,r))\lesssim \sum_{m=0}^{\infty} 2^{-\beta \frac{\log \ell_{n-m_0+1+m}}{\log a}+m}
\lesssim 2^{-\beta \frac{\log \ell_{n-m_0+1}}{\log a}}
\lesssim_{m_0}2^{-\beta \frac{\log \ell_{n}}{\log a}}=\ell_n^{\beta s}\approx r^{\beta s}.
\]
This proves the lemma.
\end{proof}

The previous lemma  yields
\begin{align*}
    \overline{F}_{\muTwo}(t)=\begin{cases}
        0& \text{ if }t<\beta s-s, \\
        t-(\beta s-s)& \text{ if } \beta s-s\leq t < \beta s,\\
        s&  \text{ if } \beta s \leq t,
    \end{cases}
\end{align*}
and 
\begin{equation}\label{overH}
    \overline{H}_{\muTwo}(t)=\begin{cases}
        0&  \text{ if }0\leq t<\beta s-u,\\
        t-(\beta s-u)& \text{ if }\beta s-u\leq t<\beta s, \\
        u& \text{ if } \beta s\leq t.
    \end{cases}
\end{equation}

Figure~\ref{figure:dimensionandlipschitzhullofspectrums} shows the graphs of 
$f_{\muTwo}(\alpha)$ ({purple}), $\overline{F}_{\muTwo}(\frac 1\alpha)$ (blue) and 
$\overline{H}_{\muTwo}(\frac 1\alpha)$ (green) as a function of $\frac 1\alpha$. One sees in 
particular that $\overline{F}_{\muTwo}(\frac 1\alpha)<\overline{H}_{\muTwo}(\frac 1\alpha)$ 
whenever $\alpha>0$ is such that $\overline{H}_{\muTwo}(\frac 1\alpha)>0$.

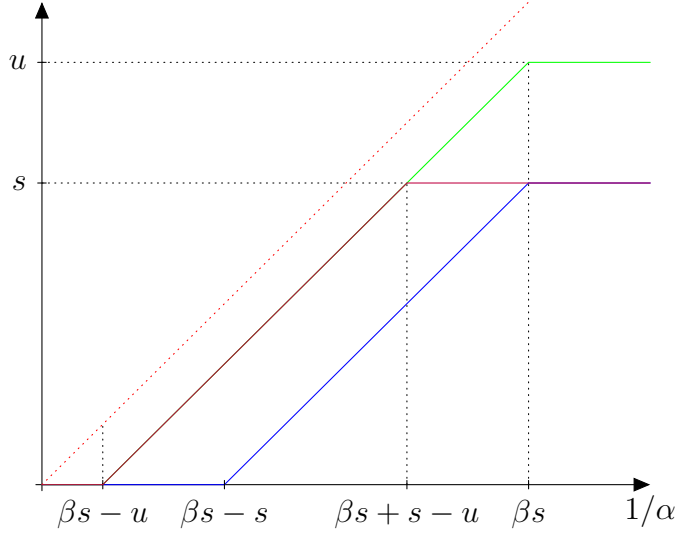
\begin{figure}[ht!]
\begin{center}
        \begin{tikzpicture}[line cap=round, line join=round, >=triangle 45,
                            x=1.0cm, y=1.0cm, scale=0.8]

          \draw [->,color=black] (-0.1,0) -- (10,0);
          \draw [->,color=black] (0,-0.1) -- (0,8);
          \draw [color=blue] (0,0) -- (3,0);
          \draw [color=blue] (3,0) -- (8,5);
          \draw [color=blue] (8,5) -- (10,5);

            \draw [color=green] (0,0) -- (1,0);
           \draw [color=green] (1,0) -- (8,7);
          \draw [color=green] (8,7) -- (10,7);

          \draw (10,0) node [below] {$1/\alpha$};

          \draw[shift={(3,0)}] (0pt, 2pt) -- (0pt,-2pt) node [below] {$\beta s-s$};
           \draw[shift={(1,0)}] (0pt, 2pt) -- (0pt,-2pt) node [below] {$\beta s-u$};
          \draw[shift={(8,0)}] (0pt, 2pt) -- (0pt,-2pt) node [below] {$\beta s$};
          \draw[shift={(6,0)}] (0pt, 2pt) -- (0pt,-2pt) node [below] {$\beta s+s-u$};
         
          \draw[shift={(0,5)}] (2pt, 0pt) -- (-2pt,0pt) node [left] {$s$};
          \draw[shift={(0,7)}] (2pt, 0pt) -- (-2pt,0pt) node [left] {$u$};
          
          \draw[dotted] (1,0) -- (1,1);
          \draw[dotted] (0,5) -- (6, 5);
          \draw[dotted] (8,0) -- (8,7);
         \draw[dotted] (6,0) -- (6,5);
          \draw[dotted] (0,7) -- (8,7);
           \draw[dotted][color=red] (0,0) -- (8,8);

           \draw[color=purple] (0,0) -- (1,0);
           \draw[color=purple] (1,0) -- (6,5);
           \draw[color=purple] (6,5) -- (10,5);
        \end{tikzpicture}
\end{center}
\caption{Graph of the functions $f_{\muTwo}(\alpha)$ (purple), 
  $\overline{F}_{\muTwo}(\frac 1\alpha)$ (blue) and $\overline{H}_{\muTwo}(\frac 1\alpha)$ 
  (green). The diagonal is depicted in red.}
\label{figure:dimensionandlipschitzhullofspectrums}
\end{figure}

\begin{lemma}
For every $\alpha$ such that $\beta s-u <\frac1\alpha$, we have that
$f_{\muTwo}(\alpha)=\min \{s, \overline{H}_{\muTwo}(\tfrac 1\alpha)\}$.
\end{lemma}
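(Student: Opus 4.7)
\emph{Plan.} For the upper bound $f_{\muTwo}(\alpha)\leq\min\{s,\overline{H}_{\muTwo}(\tfrac 1\alpha)\}$, Theorem~\ref{theorem:EPThm2} immediately gives $f_{\muTwo}(\alpha)\leq\overline{H}_{\muTwo}(\tfrac 1\alpha)$. For the bound $f_{\muTwo}(\alpha)\leq s$, I observe that $\spt\muTwo=C\cup\bigcup_{k\ge 0}\spt\mu_k$ is closed, since atoms of $\mu_k$ lie within distance $\ell_k/2\to 0$ from $C$. Almost surely $\omega_k\in\spt\muTwo$ for every $k$, and any $x\in E_{\alpha}(\omega)$ is the limit of a subsequence of $(\omega_k)_{k=1}^\infty$, so $x\in\spt\muTwo$. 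Since $\dimh C=s$ and $\bigcup_k\spt\mu_k$ is countable, countable stability of Hausdorff dimension yields $\dimh E_{\alpha}(\omega)\leq s$ almost surely.

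For the lower bound, the plan is to apply Lemma~\ref{lemma:modifiedEP9.6} with $\mu=\muTwo$, with $\theta$ equal to the natural probability measure on $C$ giving equal weight to all construction intervals at each level, and with $V_k:=\bigcup_{j\geq n(k)+1}\spt\mu_j$, where $n(k)$ is the unique integer satisfying $\ell_{n(k)+1}\leq k^{-\alpha}<\ell_{n(k)}$. The hypothesis \eqref{s2assumption} for $\theta$ takes the form $f_\theta(\underline\rho)=s_2(\underline\rho)$ whenever $s_2(\underline\rho)\leq s=\dimhu\theta$, and follows from \cite[Theorem~2.5]{JJMS2025}.

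The verification of the Frostman-like condition proceeds as follows. For $x\in C$, the Cantor-like structure of $\theta$ gives $\theta(B(x,2k^{-\alpha}))\approx 2^{-n(k)}$. Repeating the geometric-series argument used to prove $\dimlocl\muTwo(x)=\beta s$, controlled by the standing inequality $2b^{\beta s}<1$ (equivalent to $\beta>u/s$), one obtains $\muTwo(V_k)\approx\ell_{n(k)}^{\beta s}\,2^{n(k)}$ and $\muTwo(B(x,k^{-\alpha})\cap V_k)\approx\ell_{n(k)}^{\beta s}$ for $x\in C$, so $\muTwo_{V_k}(B(x,k^{-\alpha}))\approx 2^{-n(k)}$, matching the $\theta$-estimate. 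For the divergence of $\sum_k\muTwo(V_k)r_k^t$, regrouping by $n=n(k)$ and using $\#\{k:n(k)=n\}\approx\ell_n^{-1/\alpha}$ turns the sum into $\sum_n\ell_n^{\beta s+t-1/\alpha}\,2^n$. Setting $p:=\beta s+t-1/\alpha$, the assumption $\dimp C=u$ produces infinitely many $n$ with $\ell_n\approx b^n$, at which $\ell_n^p\,2^n\approx 2^{n(1-p/u)}\to\infty$ whenever $p<u$. Hence the sum diverges whenever $t<u+\tfrac 1\alpha-\beta s=\overline{H}_{\muTwo}(\tfrac 1\alpha)$.

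Lemma~\ref{lemma:modifiedEP9.6} also requires $t<s$, so choosing $t$ arbitrarily close to (but below) $\min\{s,\overline{H}_{\muTwo}(\tfrac 1\alpha)\}$, the lemma yields $f_{\muTwo}(\alpha)\geq t$; letting $t$ tend to this minimum through a countable sequence completes the lower bound and, combined with the upper bound, the equality. The main technical obstacle is the two-sided control of $\muTwo(V_k)$ and $\muTwo(B(x,k^{-\alpha})\cap V_k)$ for $x\in C$: truncating $V_k$ to atoms of level $\geq n(k)+1$ is precisely what makes $\muTwo_{V_k}$ behave like an $s$-dimensional measure near $C$ (instead of the native $\beta s$-dimensional behaviour of $\muTwo$), and all the underlying geometric-series estimates rely crucially on the standing assumption $\beta>u/s$.
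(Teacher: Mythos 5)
Your proof is correct and follows essentially the same route as the paper: the upper bound via Theorem~\ref{theorem:EPThm2} together with $E_\alpha(\omega)\subseteq C\cup\bigcup_k\spt\mu_k$, and the lower bound via Lemma~\ref{lemma:modifiedEP9.6} applied to the natural Cantor measure $\theta$ with the same Frostman comparison $\theta(B(x,2r_k))\lesssim \muTwo_{V_k}(B(x,r_k))\approx 2^{-n(k)}$. The only (harmless) variations are that you take $V_k$ to be the union of atom levels $\geq n(k)+1$ rather than the neighbourhood $C(k^{-\alpha})$ used in the paper (these differ by a bounded number of levels, so all estimates coincide), and you establish the divergence of $\sum_k\muTwo(V_k)r_k^t$ by regrouping over levels $n$ where $\ell_n$ is close to $b^n$ instead of the paper's doubling trick over a subsequence $k_i$.
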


\begin{proof}
The upper bound $f_{\muTwo}(\alpha)\leq \min \{s, \overline{H}_{\muTwo}(\frac 1\alpha) \}$
follows from Theorem~\ref{theorem:EPThm2} and the fact that 
$E_\alpha(\omega)\subset C\cup\bigcup_{k=0}^\infty\spt\mu_k$. To prove the lower bound
$f_{\muTwo}(\alpha)\ge \min \{s, \overline{H}_{\muTwo}(\frac 1\alpha) \}$, we will apply 
Lemma~\ref{lemma:modifiedEP9.6}. To that end, let $\alpha>0$ be such that 
$\beta s-u<\frac 1\alpha<\beta s+s-u$  (the situation $\frac1\alpha \geq \beta s+s-u$ is 
immediate, see Figure~\ref{figure:dimensionandlipschitzhullofspectrums}).

Recall that $\overline{H}_{\muTwo}(\frac 1\alpha)=s$ is 
equivalent to $\frac 1\alpha=\beta s+s-u$, see \eqref{overH}. Set   
$V_k:=C(k^{-\alpha})$. If $0<r<1$ and $n\in\N$ is such that $\ell_{n+1}\leq r<\ell_n$, then 
\[
\muTwo(C(r))\approx 2^{-\beta \frac{\log \ell_n}{\log a}}2^n=\ell_n^{\beta s} 2^n
\approx r^{\beta s} 2^n\in [r^{\beta s-s}, r^{\beta s-u}].
\]
This rewrites 
\begin{equation}\label{inter-xitilde}
\muTwo(V_k)\ \in [k^{-\alpha(\beta s-s{)}}, k^{-\alpha(\beta s-u)}].
\end{equation}
Also, the bounds of the interval above are reached, in the sense that, since the sequence 
$(N_n)_{n=1}^\infty$ is rapidly growing, for every $\ep>0$, there
are infinitely many integers $k$ such that
$\muTwo(V_k){<} k^{-\alpha(\beta s-s{-\ep} )}$ 
and there are infinitely many integers $k$ such that
$\muTwo(V_k){>} k^{-\alpha(\beta s-u{+\ep}) }$. 

On the other hand, for $x\in C$ and $\ell_{n+1}\leq r<\ell_n$, recalling that $\theta$ is the 
uniform measure on $C$, we have that
\[
\theta (B(x,2r))\approx 2^{-n}\approx \muTwo(C(r))^{-1}r^{\beta s}
\approx \frac{\muTwo(C(r)\cap B(x,r))}{\muTwo(C(r))}.
\]
In \cite[Example 7.1]{JJMS2025}, it is calculated that 
$f_{\theta}(\underline{\rho})=\min\{s, s_2(\underline{\rho}) \}$ for any sequence 
$\underline{\rho}$. In order to apply Lemma~\ref{lemma:modifiedEP9.6} and obtain the lower
bound $f_{\muTwo}(\alpha)\geq \min \{s, \overline{H}_{\muTwo}(\frac 1\alpha)\}$, it suffices 
to show that 
\[
\sum_{k=1}^{\infty}\muTwo(V_k)(k^{-\alpha})^t=\infty
\]
for all $0<t<\frac 1\alpha-(\beta s-u)$ (see \eqref{overH}). Fix $\varepsilon>0$. Note 
that by \eqref{inter-xitilde} and the  remark following it, there exists a strictly 
increasing sequence $(k_i)_{i=1}^\infty$ of natural numbers such
that $\muTwo(V_{k_i})\geq (k_i^{-\alpha})^{\beta s-(u-\varepsilon)}$ for every $i\in \N$. 
Furthermore, if $\frac{k_i}2\leq j\leq k_i$, then 
\[
\muTwo(V_j)\geq \muTwo(V_{k_i}) \geq (k_i^{-\alpha})^{\beta s-(u-\varepsilon)}
\geq 2^{-\alpha (\beta s-(u-\varepsilon))}(j^{-\alpha})^{\beta s-(u-\varepsilon)}.
\]
Thus,
\begin{align*}
\sum_{k=1}^{\infty}\muTwo(V_k)(k^{-\alpha})^t
   &\geq \lim_{i\to \infty}\sum_{j={\left\lceil\frac{k_i}2\right\rceil}}^{k_i}2^{-\alpha (\beta s-(u-\varepsilon))}
     j^{-\alpha(\beta s-(u-\varepsilon))}j^{-\alpha t}\\
   &\gtrsim \lim_{i\to \infty} k_i^{1-\alpha(t+\beta s-u+\varepsilon)}=\infty
\end{align*}
provided $t+\beta s-u+\varepsilon<\frac 1\alpha$, that is, if 
$t<\frac 1\alpha-\beta s+u-\varepsilon$. By Lemma~\ref{lemma:modifiedEP9.6}, 
$f_{\muTwo}(\alpha)\geq\frac 1\alpha -\beta s+u-\varepsilon$ and letting $\varepsilon$ tend to
$0$ along a sequence yields 
\[
f_{\muTwo}(\alpha)\geq\frac 1\alpha -\beta s+u=\overline{H}_{\muTwo}(\tfrac 1\alpha),
\]
since $\beta s-u<\frac 1\alpha<\beta s+s-u$ and 
$\overline{H}_{\muTwo}(\frac 1\alpha)=\frac 1\alpha-(\beta s-u)$ in this range.
\end{proof}

\section{Application to hitting probabilities}
 \label{sec-hitting}
 
In \cite{JJKLSX2017}, the authors studied the hitting probability problem for analytic sets in 
Ahlfors regular metric spaces. They obtained the following result:

\begin{theorem}[Järvenpää, Järvenpää, Koivusalo, Li, Suomala, Xiao]
      \label{theorem:JJKLSX_hittingprob} 
Let $X$ be an Ahlfors regular metric space, $\mu$ the natural Ahlfors regular probability 
measure on $X$ and $\underline{r}$ a sequence of radii such that 
$s_2(\underline{r})\leq \dim X$. Then, for all analytic sets $F\subseteq X$, 
\begin{enumerate}
    \item[i)] if $\dimp F< \dim X-s_2(\underline{r})$, then $F\cap E_{\underline{r}}(\omega)
      =\emptyset$ almost surely,

    \item[ii)] if $\dimh F>\dim X-s_2(\underline{r})$, then $F\cap E_{\underline{r}}(\omega)
      \neq\emptyset$ almost surely and

    \item[iii)] if   $\alpha>0$  and $\dimp F>\dim X-\frac 1\alpha$, then 
      $F\cap E_{\alpha}(\omega)\neq\emptyset$ almost surely.
\end{enumerate}

\end{theorem}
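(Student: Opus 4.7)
Parts (i) and (ii)--(iv) of Theorem~\ref{theorem:hittingprobforregularmeasure} require opposite techniques: (i) is a direct Borel--Cantelli argument, while (ii)--(iv) all follow the same scheme of finding a nonempty compact $K\subseteq C$ with $K=\mc J^{\Leb}_{\underline r,0}(K)$ and invoking Proposition~\ref{lemma:conditionforpositivehittingprobability}. For (i), I would write $C=\bigcup_{i\in\N}C_i$ with $\dimBu C_i< d-s_2(\underline r)$ for each $i$, using the $\sigma$-stability of packing dimension. A covering-number estimate then gives $\Leb(C_i(r))\lesssim r^{d-\dimBu C_i-\varepsilon}$ for all small $r$, so for $\varepsilon$ small enough $\sum_k\Pb(\omega_k\in C_i(r_k))=\sum_k\Leb(C_i(r_k))<\infty$. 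Since $\{B(\omega_k,r_k)\cap C_i\ne\emptyset\}=\{\omega_k\in C_i(r_k)\}$, the Borel--Cantelli lemma yields $C_i\cap E_{\underline r}(\omega)=\emptyset$ almost surely, and a countable union finishes (i).

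For (ii)--(iv) I would first pass to a compact subset $K\subseteq C$ of appropriately large dimension using the standard inner approximation of analytic sets. For (ii), fix $s\in(d-s_2(\underline r),\dimh K)$, let $\nu$ be an $s$-Frostman measure on $K$, and set $K_\nu:=\spt\nu$. For any $x\in K_\nu$ and $r>0$, a maximal $r_k$-separated subset of $K_\nu\cap B(x,r/2)$ has cardinality $\gtrsim\nu(B(x,r/2))\,r_k^{-s}$ by the Frostman bound, hence produces disjoint balls of volume $\gtrsim r_k^d$ inside $K_\nu(r_k)\cap B(x,r)$, giving $\Leb(K_\nu(r_k)\cap B(x,r))\gtrsim r_k^{d-s}$. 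As $d-s<s_2(\underline r)$, the series $\sum_k r_k^{d-s}$ diverges, so $K_\nu\subseteq\mc J^{\Leb}_{\underline r,0}(K_\nu)$, and Lemma~\ref{lem-Jmurt} upgrades this to the fixed-point property.

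For (iii), the Frostman route is unavailable since $\dimMBl C$ may strictly exceed $\dimh C$. Instead I would construct, by a transfinite induction modelled on Proposition~\ref{lemma:Itildeinvariantsetfoundaftercountablymanysteps}, a nonempty compact locally $\dimBl$-homogeneous subset $K\subseteq C$, meaning $\dimBl(K\cap B(x,r))>s$ for every $x\in K$ and $r>0$, where $s_0:=d-s_2(\underline r)<s<\dimMBl C$. At each successor stage, remove from the current compact set the union of open balls centred at its points where the local $\dimBl$ drops to $\le s$, and intersect at limit ordinals. The removed part is a countable union of sets with $\dimBl\le s$, so has $\dimMBl\le s$, and the countable stability of $\dimMBl$ keeps $\dimMBl$ of the remaining set above $s$, in particular nonempty. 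The same $N(\cdot)$-shrinking argument as in Proposition~\ref{lemma:Itildeinvariantsetfoundaftercountablymanysteps} makes the process stabilise at a countable ordinal. A maximal $r_k$-separated set in $K\cap B(x,r/2)$ then has cardinality $\gtrsim r_k^{-s-\eta}$ for all small $r_k$ (with $\eta=\eta(x,r)>0$), whence $\Leb(K(r_k)\cap B(x,r))\gtrsim r_k^{d-s-\eta}$, and the series diverges because $d-s-\eta<s_2(\underline r)$. For (iv), the Joyce--Preiss theorem for packing dimension furnishes a compact $K\subseteq C$ with $\dimBu(K\cap B(x,r))=\dimBu K>d-\frac1\alpha=:s$ for every $x\in K$ and $r>0$. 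Since $\dimBu$ uses a $\limsup$, I only obtain a sequence of scales $\rho_j\to 0$ with $\Leb((K\cap B(x,r/2))(\rho_j))\gtrsim\rho_j^{d-s-\eta}$, but the regular spacing of $r_k=k^{-\alpha}$ supplies $\approx\rho_j^{-1/\alpha}$ indices $k$ with $r_k\in[\rho_j,2\rho_j]$, and monotonicity of $r\mapsto\Leb(K(r))$ gives
\[
\sum_{k\,:\,r_k\in[\rho_j,2\rho_j]}\Leb(K(r_k)\cap B(x,r))\gtrsim\rho_j^{-1/\alpha}\cdot\rho_j^{d-s-\eta}=\rho_j^{-\eta}\xrightarrow[j\to\infty]{}\infty,
\]
so the full series diverges and $K$ is again a fixed point of $\mc J^{\Leb}_{\underline r(\alpha),0}$.

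I expect the main obstacle to lie in the transfinite extraction for (iii): one has to track the interplay between $\dimBl$ (used pointwise to measure local thickness of the iterates) and $\dimMBl$ (used globally to ensure non-emptiness), exploiting the countable stability of the latter to compensate for the failure of countable stability of the former. The Joyce--Preiss step sidesteps this issue in (iv) by delivering homogeneity for packing directly, but then one must exploit the $k^{-\alpha}$ regularity of the radii to convert a $\limsup$-type local density into a divergent series, which is exactly why (iv) is formulated only for $\underline r=\underline r(\alpha)$ rather than for general $\underline r$ with $s_2(\underline r)=\frac1\alpha$.
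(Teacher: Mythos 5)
Your proposal is correct in substance, but it proves the four parts by a genuinely different route than the paper. The paper argues every part by contraposition through the canonical fixed point $C_0^{\beta_0}$ of the hitting operator: for i) it assumes a.s.\ nonempty intersection, deduces that $K=C_0^{\beta_0}\neq\emptyset$, shows every relatively open piece of $K$ satisfies $\Leb\bigl((V\cap K)(r_k)\bigr)\geq r_k^{s_2(1+\varepsilon)}$ infinitely often and hence $\dimBu(V\cap K)\geq d-s_2$, and concludes $\dimp K\geq d-s_2$ by a Baire category argument; for ii)--iv) it assumes a.s.\ empty intersection, so $C_0^{\beta_0}=\emptyset$, and bounds $\dimMBl(C\setminus C_0^{\gamma})\leq d-s_2$ (resp.\ $\dimp\leq d-\frac1\alpha$ for $\underline r(\alpha)$) by transfinite induction on the iterates of $\Jmurz$ itself, with ii) following from iii) since $\dimh\le\dimMBl$. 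You instead go forward: your i) is a direct Borel--Cantelli computation that bypasses the hitting operator entirely (and, as a bonus, needs no analyticity of $C$, whereas the paper must invoke Proposition~\ref{compacthitting} there); your ii) builds an explicit fixed point from the support of a Frostman measure rather than deducing it from iii); your iii) runs a second, parallel transfinite induction to extract a $\dimBl$-homogeneous compact subset and only then verifies the fixed-point property; your iv) gets homogeneity from Joyce--Preiss plus passing to the support of the packing measure (note you only obtain and only need $\dimBu(K\cap B(x,r))\ge s>d-\frac1\alpha$, not equality with $\dimBu K$), and then uses the same block-summation trick over $\{k:r_k\in[\rho_j,2\rho_j]\}$ that the paper uses in its contrapositive direction. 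Both approaches ultimately rest on Proposition~\ref{lemma:conditionforpositivehittingprobability}; the paper's is more economical in that one transfinite construction (the one already set up for $\Jmurz$) serves all of ii)--iv), while yours is more constructive and isolates exactly which structural property of $C$ (Frostman measure, $\dimBl$-homogeneity, packing homogeneity) produces a fixed point in each regime. The details you leave implicit — countable stability of $\dimMBl$ versus its failure for $\dimBl$ in the homogenisation, and the stabilisation of that process at a countable ordinal via the $N(\cdot)$ argument — do go through as you indicate.
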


In \cite{LS2014}, it is shown that for arbitrary sequences $\underline{r}$, the condition 
$\dimh F>\dim X-s_2(\underline{r})$ in the previous theorem cannot be relaxed by replacing 
the Hausdorff dimension with the packing dimension, even in the case of the torus. Using the 
tools developed in  Section~\ref{section:hittingprob}, we give below a new proof for the 
above theorem in $\R^d$. 

Let us emphasise that, as claimed in the introduction, a somewhat interesting consequence of 
this proof is that, at least in the case of compact sets, the condition in item ii) in
Theorem~\ref{theorem:JJKLSX_hittingprob} for the Hausdorff
dimension of the intersecting set can be replaced by the modified lower box counting
dimension, defined for every set $E$ by
\[
\dimMBl E =\inf\Bigl\{\sup_{i\in I}{ \dimBl F_i}\mid E\subset \bigcup_{i\in I}F_i\Bigr\},
\]
where $\dimBl$ is the lower box counting dimension. Similarly, the modified upper 
box counting dimension is
\begin{equation}\label{defdimMBu}
\dimMBu E =\inf\Bigl\{\sup_{i\in I}{ \dimBu F_i}\mid E\subset \bigcup_{i\in I}F_i\Bigr\}.
\end{equation}
It is well known that $\dimMBu E=\dimp E$ and $\dimh E\le\dimMBl E$ for all sets 
$E$, and the strict inequality $\dimh E<\dimMBl E$ is possible (see e.g. 
\cite{Fal2003,Mat1995}). 

\begin{proof}[Proof of Theorem~\ref{theorem:hittingprobforregularmeasure}]
Write $s_2:=s_2(\underline{r})$.  We start by proving the theorem for compact sets.

Let us first prove i). Note that 
$C\cap E_{\underline{r}}(\omega)=\emptyset$ is a tail event. Suppose that 
$C\cap E_{\underline{r}}(\omega)\neq \emptyset$ almost surely. We will show that this implies 
that $\dimp C\geq d-s_2$. Set $K:=C_0^{\beta_0}\subseteq C$.
By Corollary~\ref{lemma:ifinvariantsetemptythennointersection} and \eqref{defJt}, we have 
that  
\[
K=\Bigl\{ x\in K\mid \sum_{k=1}^\infty\Leb\bigl(B(x,r)\cap K(r_k)\bigr)=\infty 
\text{ for all } r>0\Bigr\}\ne\emptyset.
\]
Since $K\subseteq C$, it suffices to show that $\dimp K\ge d-s_2$. By a standard
argument, it is enough to prove that $\dimBu(V\cap K)\ge d-s_2$ for all open sets $V$ 
intersecting $K$. Indeed, since $\dimBu F=\dimBu\overline F$ for all bounded sets $F$, where
$\overline F$ is the closure of $F$, one may
use closed sets $F_i$ in \eqref{defdimMBu}. Then consider a covering of $K$ by closed sets 
 $\{F_i\}_{i=1}^\infty$, so that $K=\bigcup_{i=1}^\infty K\cap F_i$. Since $K$ 
is compact, it is of the second category. By the Baire's category theorem, there is an integer 
$i$ such that $K\cap F_i$ has nonempty interior relative to $K$, that is, there is an open set 
$V\subset X$ such that $\emptyset\ne V\cap K\subseteq K\cap F_i$.

Let $V\subseteq \R^d$ be an open set intersecting $K$. Then $V\cap K\supseteq B(x,r)\cap K$
for some $x\in K$ and $r>0$. Observe now that, for any $0<\delta<\frac r4$, 
\[
B(x,\tfrac r2)\cap K(\delta)\subseteq B(x,r-2\delta)\cap K(\delta)
\subseteq (B(x,r)\cap K)(\delta).
\]
Since $x\in K$, we have that 
\[
\sum_{k=1}^\infty \Leb\bigl(B(x,\tfrac r2)\cap K(r_k)\bigr)=\infty.
\]
Let $\varepsilon>0.$ Then  
\[
\Leb\bigl(B(x,\tfrac r2)\cap K(r_k)\bigr)\geq r_k^{s_2(1+\varepsilon)} 
\text{ for infinitely many } k\in \N,
\]
since otherwise, by \eqref{def-s2}, the above sum would be finite. Thus, 
\[
\Leb\bigl((V\cap K)(r_k)\bigr)\geq  \Leb\bigl((B(x,r)\cap K)(r_k)\bigr) 
\geq  r_k^{s_2(1+\varepsilon)}
\]
for infinitely many $k\in N$. Recall that the upper box counting dimension $\dimBu$ equals the 
upper Minkowski dimension $\dimMu$. Therefore,
\begin{align*}
\dimBu (V\cap K)&=\dimMu (V\cap K)=d-\liminf_{\delta \to 0}
      \frac{\log \Leb((V\cap K )(\delta))}{\log \delta}\\
    &\geq d-\liminf_{k\to \infty} \frac{\log \Leb((V\cap K )(r_k))}
      {\log r_k}\geq d-s_2(1+\varepsilon).
\end{align*}
Letting $\varepsilon\to 0$, we obtain $\dimBu (V\cap K)\geq d-s_2$, completing the 
proof of i).

\medskip

We   now prove iii),  which implies  ii) for compact sets. Suppose that
$C\cap E_{\underline{r}}(\omega)= \emptyset$ almost
surely. We will show that this implies the inequality $\dimMBl C\leq d-s_2$. 

By Lemma~\ref{lemma:Itildeinvariantsetfoundaftercountablymanysteps}, there exists a countable
ordinal $\beta_0$ such that $C_0^{\beta_0}=C_0^{\beta_0+1}$. By 
Proposition~\ref{lemma:conditionforpositivehittingprobability} applied to $C_0^{\beta_0}$, we
necessarily have that $C_0^{\beta_0}=\emptyset$. Thus, it suffices to show that 
\begin{equation}\label{equation:inductionclaimfordimMBl}
    \dimMBl (C\setminus C^{\lambda}_0)\leq d-s_2
\end{equation}
for each countable ordinal $\lambda$. To this end, we prove the following lemma.

\begin{lemma}
For every compact set $K\subseteq X$,  
\begin{equation}\label{equation:upperboundfordimMBl}
    \dimMBl (K\setminus \mathcal{J}_{\underline{r},0}^{{\Leb}}(K))\leq d-s_2.
\end{equation}
\end{lemma}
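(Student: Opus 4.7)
The plan is to prove \eqref{equation:upperboundfordimMBl} by a countable decomposition of $K \setminus \mathcal{J}_{\underline{r},0}^{\Leb}(K)$, a uniform local volume estimate on each piece, and a volume-to-covering argument. Since every point of $K \setminus \mathcal{J}_{\underline{r},0}^{\Leb}(K)$ admits some $r > 0$ (which may be taken of the form $1/n$) at which the characteristic series converges, one has
\[
K \setminus \mathcal{J}_{\underline{r},0}^{\Leb}(K) = \bigcup_{n,N \in \N} W_{n,N}, \qquad W_{n,N} := \Bigl\{x \in K \,\Bigm|\, \sum_{k \geq N} \Leb\bigl(B(x,\tfrac{1}{n}) \cap K(r_k)\bigr) \leq 1\Bigr\}.
\]
Countable stability of $\dimMBl$, which is immediate from its definition by combining coverings, reduces the task to showing $\dimMBl W_{n,N} \leq d - s_2$ for each fixed pair $(n,N)$. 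Covering the compact space $X$ by finitely many balls $B_i$ of radius $\rho := 1/(4n)$ and using $\dimMBl \leq \dimBl$, I would be further reduced to proving $\dimBl(B_i \cap W_{n,N}) \leq d - s_2$ for each $i$ with $B_i \cap W_{n,N} \neq \emptyset$.

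Next, I would derive a uniform volume estimate on each such ball $B_i = B(x_i,\rho)$. Picking any $y \in B_i \cap W_{n,N}$ and choosing $K_0 \geq N$ with $r_k < 1/(2n)$ for all $k \geq K_0$, the triangle inequality (for $w \in (B_i \cap K)(r_k)$ one gets $|w - y| \leq r_k + 2\rho < 1/n$) gives $(B_i \cap K)(r_k) \subseteq B(y, 1/n) \cap K(r_k)$ for all $k \geq K_0$, whence
\[
\sum_{k \geq K_0} \Leb\bigl((B_i \cap K)(r_k)\bigr) \leq \sum_{k \geq N} \Leb\bigl(B(y,\tfrac{1}{n}) \cap K(r_k)\bigr) \leq 1,
\]
a uniform bound depending only on $n$, $N$ and $B_i$, not on the particular $y$.

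Finally, I would turn this summability into a box counting bound. Fix $\varepsilon > 0$. By the very definition of $s_2 = s_2(\underline r)$ in \eqref{def-s2}, the series $\sum_k r_k^{s_2 - \varepsilon}$ diverges, so the uniform bound forces $\Leb((B_i \cap K)(r_k)) \leq r_k^{s_2 - \varepsilon}$ for infinitely many $k$. At every such scale, a standard packing argument (a maximal disjoint family of balls $B(z_j, r_k)$ with $z_j \in B_i \cap K$ fits inside $(B_i \cap K)(r_k)$) shows that $B_i \cap K$ is covered by at most $c_d r_k^{-d} \Leb((B_i \cap K)(r_k)) \leq c_d r_k^{s_2 - \varepsilon - d}$ balls of radius $2 r_k$, giving $\dimBl(B_i \cap K) \leq d - s_2 + \varepsilon$. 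Letting $\varepsilon \downarrow 0$ and using monotonicity of $\dimBl$ yields $\dimBl(B_i \cap W_{n,N}) \leq d - s_2$, and assembling the finitely many $B_i$ and the countable family of $W_{n,N}$ via countable stability of $\dimMBl$ completes the proof.

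The main obstacle is the translation from the pointwise condition defining the complement of $\mathcal{J}_{\underline{r},0}^{\Leb}(K)$ (the sum is finite at $x$ for \emph{some} $r > 0$) into a uniform local volume estimate on an entire ball. This is precisely what dictates the two-parameter decomposition with the uniform tail bound $\leq 1$ and the auxiliary truncation at $K_0$, which together ensure that the $r_k$-neighbourhood of $B_i \cap K$ sits inside a ball $B(y, 1/n)$ on which the volume sum is controlled by the defining property of $W_{n,N}$.
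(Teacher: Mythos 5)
Your proof is correct and follows essentially the same route as the paper's: decompose the complement of the fixed-point set according to the radius $\tfrac 1n$ at which the series converges, deduce that the Lebesgue measure of the $r_k$-neighbourhood of a local piece of $K$ is at most $r_k^{s_2-\varepsilon}$ for infinitely many $k$ (since $\sum_k r_k^{s_2-\varepsilon}=\infty$), convert this to a lower box counting dimension bound, and reassemble via countable stability of $\dimMBl$. The only cosmetic differences are your two-parameter truncated decomposition $W_{n,N}$ with the uniform tail bound (the paper works pointwise on the single-parameter sets $W_n$ and then covers $W_n$ by finitely many balls centred in $W_n$) and your explicit packing argument in place of the paper's direct appeal to the identity $\dimMl E = d-\limsup_{\delta\to 0}\log\Leb(E(\delta))/\log\delta$.
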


\begin{proof}[Proof of Lemma]
Since $\dimMBl$ is countably stable and
\begin{align*}
K\setminus \mathcal{J}_{\underline{r},0}^{{\Leb}}(K)&=\Bigl\{x\in K\mid \sum_{k=1}^\infty 
    \Leb\bigl(B(x,r)\cap K(r_k)\bigr)<\infty \text{ for some } r>0\Bigr\}\\
  &=\bigcup_{n=1}^\infty\Bigl\{x\in K\mid \sum_{k=1}^\infty 
    \Leb\bigl(B(x,\tfrac 1n)\cap K(r_k)\bigr)<\infty\Bigr\}\eqqcolon\bigcup_{n=1}^\infty W_n,
\end{align*}
it suffices to show that $\dimMBl W_n\leq d-s_2$ for every $n\in \N$. Fix $x\in W_n$ and 
$\varepsilon>0$. Note first that, for any $\delta>0$,
\[
\bigl(B(x,\tfrac 1{2n})\cap K\bigr)(\delta)\subseteq B\bigl(x,\tfrac 1{2n}+\delta\bigr)\cap K(\delta).
\]
Let $k_n\in\N$ be such that $r_k<\frac 1{2n}$ for all $k\ge k_n$. Then, for infinitely many 
$k\geq k_n$, we must have that 
\begin{equation}\label{equation:upperboundformeasuresofneighborhoods}
\Leb\bigl(\bigl(B(x,\tfrac 1{2n})\cap K\bigr)(r_k)\bigr) \leq\Leb\bigl(B(x,\tfrac 1{2n}+r_k)
\cap K(r_k)\bigr)< r_k^{s_2(1-\varepsilon)},
\end{equation}
since otherwise we would have that  
\begin{equation}
\label{sum-leb-infinite}
\sum_{k=1}^\infty \Leb\bigl(B(x,\tfrac 1n)\cap K(r_k)\bigr)\geq \sum_{k={k_n}}^\infty 
\Leb\bigl(B(x,\tfrac 1{2n}+r_k)\cap K(r_k)\bigr)=\infty,
\end{equation}
contradicting the fact that $x\in W_n$. Thus, 
\[
\frac{\log \Leb\bigl(\bigl(B(x,\tfrac 1{2n})\cap K\bigr)(r_k)\bigr)}{\log r_k}\geq 
s_2(1-\varepsilon)
\]
for infinitely many $k\in \N$. This yields
\begin{align}
\nonumber
\dimBl \bigl(B(x,\tfrac 1{2n})\cap K\bigr)& = \dimMl \bigl(B(x,\tfrac 1{2n})\cap 
  K\bigr)\\ \nonumber
&= d-\limsup_{\delta\to 0} \frac{\log \Leb\bigl((B(x,\tfrac 1{2n})
  \cap K )(\delta)\bigr)}{\log \delta}\\
& \leq d-\limsup_{k\to \infty} \frac{\log \Leb\bigl((B(x,\tfrac 1{2n})\cap K )
  (r_k)\bigr)}{\log r_k} \leq d-s_2(1-\varepsilon).
      \label{eq-maj-dim}
\end{align}
Since $\varepsilon>0$ was arbitrary, we obtain the inequality 
$\dimBl \bigl(B(x,\frac 1{2n})\cap K)\leq d-s_2$, and covering $W_n$ by finitely many balls
with centres in $W_n\subseteq K$ and radius $\frac 1{2n}$ yields 
\[
\dimMBl W_n \leq d-s_2.
\]
Thus \eqref{equation:upperboundfordimMBl} is established. \end{proof}

The general claim
\eqref{equation:inductionclaimfordimMBl} now follows by transfinite induction in the same way 
as \eqref{equation:inductionclaimforJt} was proved in the proof of
Proposition~\ref{lemma:necessaryoconditionfordimensionofintersection}.

\medskip

Item iv) can be proved in the same way as iii), but to get the packing dimension in the case 
$\underline r{(\alpha)}=(k^{-\alpha})_{k=1}^{\infty}$ instead of the modified 
lower box counting dimension in ii), the estimate
\eqref{equation:upperboundformeasuresofneighborhoods} must hold for all large $k\in\N$
instead of only infinitely many $k\in\N$.  In fact, if the opposite inequality is true for 
arbitrarily large $k\in\N$, one can show (as in the proof below starting from 
\eqref{equation:lowerbounforalphaobtainedfromalpha0} and concluding with
\eqref{sum-infinite})
that then the sum \eqref{sum-leb-infinite} will be infinite. This yields that we
can replace the lower box counting dimension by the upper one 
which, in turn, yields the upper bound for the modified upper box
counting dimension that coincides with the packing dimension.

\medskip

Finally,  we turn to the general case of analytic sets $C$. Claims ii) and iv) follow
from the fact that the Hausdorff and packing dimensions of analytic sets can be approximated
from inside by the ones of compact sets \cite{D1952,H1986,JP1995}. The claim i), in turn,
follows from Proposition~\ref{compacthitting}.
\end{proof}

\begin{remark}\label{newproof}
 We do not know whether part iii) of 
Theorem~\ref{theorem:hittingprobforregularmeasure} can be extended to analytic sets.
This  would follow if the modified lower box counting dimension could be approximated from inside
by compact sets, but we do not know if this is true. It is true for the modified
upper box counting dimension (since it equals the packing dimension) and it is not true for
the lower and upper box counting dimensions.   
\end{remark}

\section{Some remarks and perspectives}
\label{sec-conclusion}

\subsection {About the necessity of using ordinals}
\label{sec-ordinals}

The process described in 
Propositions~\ref{lemma:Itildeinvariantsetfoundaftercountablymanysteps-intro} and 
\ref{lemma:Itildeinvariantsetfoundaftercountablymanysteps} is
rather natural when studying hitting probabilities of random covering sets, and one might 
wonder whether a finite number of steps would be sufficient to reach the fixed point. In 
general, this is not true.

The following example (which we develop for simplicity in the case $t=0$) shows that the
invariant set in Proposition~\ref{lemma:Itildeinvariantsetfoundaftercountablymanysteps-intro} 
might not be found after iterating the map $\Jmurt$ over the natural numbers. The example is 
similar to examples of compact sets having Cantor--Bendixson rank larger than $\omega$, and 
can be adapted to obtain any countable  successor ordinal as $\beta_0$ in 
Proposition~\ref{lemma:Itildeinvariantsetfoundaftercountablymanysteps}. 
 
Let $\mc{L}_1$ denote the Lebesgue measure restricted to the unit interval $[0,1]$, and let 
$\underline{r}$ be a decreasing sequence of positive numbers such that 
\begin{equation}\label{equation:radiiforexample}
 \sum_{k=1}^\infty r_k<\infty \text{ and } \sum_{k=1}^\infty\sqrt{r_k}=\infty. 
\end{equation}
\begin{lemma} 
\label{lemma-nonfiniteordinal}
There exists a compact set $K\subseteq [0,1]$ such that 
\[
 \bigcap_{n=1}^\infty(\Jmurz)^n(K)=\{0\}.
\]
\end{lemma}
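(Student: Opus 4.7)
I would build $K$ with a tower-like structure where the $n$-th piece has Cantor--Bendixson-like depth $n$, calibrated so that one application of $\mathcal{J} := \mathcal{J}^{\mathcal{L}_1}_{\underline{r},0}$ strips off exactly one depth level from each piece simultaneously, while the base point $0$ is preserved forever by a harmonic-type accumulation.

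Two elementary observations drive everything. \emph{(i) Isolated points are erased.} If $x$ is isolated in a compact set $H$, pick $r > 0$ with $H \cap B(x,2r) = \{x\}$; then $H(r_k) \cap B(x,r) \subseteq B(x,r_k)$ for $r_k < r$, hence $\sum_k \mathcal{L}_1(B(x,r) \cap H(r_k)) \leq 2\sum_k r_k < \infty$ by the first half of \eqref{equation:radiiforexample}, so $x \notin \mathcal{J}(H)$. \emph{(ii) The harmonic accumulation at $0$ survives.} For $A := \{0\} \cup \{1/j : j \geq 1\}$ and any fixed $r > 0$, consecutive gaps $1/j - 1/(j+1) \sim 1/j^2$ exceed $2r_k$ exactly when $j \lesssim r_k^{-1/2}$, so the $r_k$-neighbourhoods of $\{1/j\}_{1/r < j \lesssim r_k^{-1/2}}$ are pairwise disjoint inside $B(0,r)$ and contribute measure $\gtrsim r_k \cdot r_k^{-1/2} = \sqrt{r_k}$. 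The second half of \eqref{equation:radiiforexample} then yields $\sum_k \mathcal{L}_1(B(0,r) \cap A(r_k)) = \infty$, so $0 \in \mathcal{J}(A)$. Both assumptions on $\underline{r}$ are invariant under rescaling radii, so the same conclusions hold for every similarity $p + \lambda A$.

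Now set $E_0 := \{0\}$ and, inductively for $n \geq 1$,
\[
E_n := \{0\} \cup \bigcup_{j=1}^\infty \Bigl(\lambda_{n,j} E_{n-1} + \tfrac{1}{j}\Bigr),
\]
with $\lambda_{n,j} > 0$ small enough (for instance $\lambda_{n,j} := 10^{-nj}$) to guarantee that the translated copies are pairwise disjoint, disjoint from $\{0\}$, and $E_n$ is compact. Finally, define
\[
K := \{0\} \cup \bigcup_{n=1}^\infty (c_n + t_n E_n), \qquad c_n := 1/n,
\]
with $t_n > 0$ small enough that all pieces $c_n + t_n E_n$ are pairwise disjoint and disjoint from $\{0\}$. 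By transfinite-free induction on $k$, I would show
\[
\mathcal{J}^k(K) = \{0\} \cup \bigcup_{n > k}(c_n + t_n \widetilde{E}_{n,k}),
\]
where $\widetilde{E}_{n,k}$ has been stripped of $k$ top accumulation layers from $E_n$. At each step, (i) erases the currently outermost isolated points inside each piece; (ii) applied globally at $0$ using the persistent accumulation $\{c_m : m > k\}$ keeps $0$ alive; and (ii) applied inside each remaining piece, at the internal base $c_n$, keeps $c_n$ alive until the last depth has been peeled off.

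For every $k \geq 1$, the piece $c_{k+1} + t_{k+1} E_{k+1}$ is still nontrivially present in $\mathcal{J}^k(K)$, which therefore strictly contains $\{0\}$; conversely, any $x \in K \setminus \{0\}$ lies in a unique piece $c_n + t_n E_n$ at some finite depth $\leq n$, and is thus removed by some $\mathcal{J}^m(K)$. Intersecting gives $\bigcap_{k=1}^\infty \mathcal{J}^k(K) = \{0\}$. The principal obstacle is justifying that $\mathcal{J}$ really acts locally on each piece: the hitting sum at a point of $c_n + t_n E_n$ could in principle be inflated by distant pieces, and this must be ruled out by choosing $t_n$ and $\lambda_{n,j}$ to decay fast enough relative to $\underline{r}$. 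This is a quantitative bookkeeping exercise, carefully matching the rates in the construction to the thresholds dictated by the two halves of \eqref{equation:radiiforexample}.
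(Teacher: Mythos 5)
Your construction is essentially the paper's: the same two observations (isolated points are erased because $\sum_k r_k<\infty$; the base of a harmonic accumulation survives because $\sum_k\sqrt{r_k}=\infty$, and both are scale-invariant), the same recursive tower of sets of depth $n$ (the paper's $K_n$, built with the explicit scales $\tfrac{1}{2j(j-1)}$), and the same assembly along $\{1/n\}\cup\{0\}$. The ``principal obstacle'' you flag at the end requires no delicate rate-matching: since distinct pieces lie at positive distance from one another, for any point $x$ other than the relevant accumulation point one takes $r$ smaller than that distance, and then only finitely many $k$ have $r_k$ large enough for other pieces' $r_k$-neighbourhoods to meet $B(x,r)$, so the hitting sum localises automatically (this, plus a small off-by-one in your formula for $\mathcal{J}^k(K)$ --- the piece at $c_k$ survives as the singleton $\{c_k\}$ after $k$ steps --- is all that separates your sketch from a complete proof).
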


\begin{proof}
In order to define the desired set $K$, we first construct suitable compact sets 
$K_n\subseteq [0,1]$ for all $n\in \N$ by using induction. First, we let 
$K_1\coloneqq \{\frac{1}{n}\mid n\in \N \}\cup\{0\}.$ We now make some observations about 
$K_1$. If $0<\rho<\frac 14$, there is $k\in \N$ such that 
\[
\frac{1}{k(k+1)}= \frac{1}{k}-\frac{1}{k+1}<2\rho\leq \frac{1}{k-1}-\frac{1}{k}
 =\frac{1}{k(k-1)}.
\]
Then $k^{-1}\approx\sqrt{\rho}$ and $K_1(\rho)\supseteq [0,1/k[$, since 
$2\rho>\frac{1}{k}-\frac{1}{k+1}$. Thus, for any fixed $r>0$ and for every $k\in \N$ large
enough (depending on $r$), the intersection $B(0,r)\cap K_1(r_k)$ contains an interval with
length comparable to $\sqrt{r_k}$. By \eqref{equation:radiiforexample}, 
\begin{equation}\label{equation:0isinTildeJ1}
  \sum_{k=1}^\infty \mc{L}_1\bigl(B(0,r)\cap K_1(r_k)\bigr)=\infty 
\end{equation}
for every $r>0$. On the other hand, for any $x\in K_1\setminus \{0\}$ and for small enough 
$r>0$ (depending on $x$), we have that $B(x,r)\cap K_1\subseteq \{x\}$, hence, for $k$ large 
enough,
\[
B(x,r)\cap K_1(r_k)\subseteq B(x,r_k).
\]
By \eqref{equation:radiiforexample},
\begin{equation}\label{equation:not0isnotinTildeJ1}
  \sum_{k=1}^\infty \mc{L}_1\bigl(B(x,r)\cap K_1(r_k)\bigr)<\infty .
\end{equation}
Combining \eqref{equation:0isinTildeJ1} and \eqref{equation:not0isnotinTildeJ1}, we obtain 
that 
\begin{equation}\label{equation:tildeJ1is0}
  \Jmurz(K_1)=\{0\}.
\end{equation}
   
We now proceed with induction. Suppose that we have a compact set $K_n\subseteq [0,1]$ with 
the property that 
\[
(\Jmurz)^n(K_n)=\{0\}.
\]
We then define $K_{n+1}$ by setting
\[
K_{n+1}=\{0\}\cup \bigcup_{j=2}^{\infty}K_{j,n}, \ \ \mbox{ where }  \ \ 
K_{j,n}\coloneqq \frac{1}{j}+ \frac{1}{2j(j-1)} K_n .
\]
 Then, since 
\[
\dist(K_{j-1,n},K_{j,n})=\frac{1}{2}{\Bigl(\frac{1}{j-1}-\frac{1}{j}\Bigr)}  
\]
and $(\Jmurz)^{n}(K_{j,n})=\{\frac{1}{j}\}$ for every $j=2,3,\dots$, we have that 
\begin{equation}\label{intersectj}
(\Jmurz)^n(K_{n+1})\cap\Bigl[\frac 1j,\frac 1{j-1}\Bigr[=\Bigl\{\frac{1}{j}\Bigr\}
\end{equation}
for every $j=2,3,\dots$ Note also that the fact  
$(\Jmurz)^{n}(K_{j,n})=\{\frac{1}{j}\}\neq \emptyset$ implies that 
\[
\sum_{k=1}^\infty\mc{L}_1\bigl((\Jmurz)^{n-1}(K_{j,n})(r_k)\bigr)=\infty.
\]
For every $r>0$, we have $B(0,r)\supseteq K_{j,n}$ for some $j$, hence
\[
\sum_{k=1}^\infty \mc{L}_1\bigl(B(0,r)\cap ((\Jmurz)^{n-1}(K_{n+1}))(r_k)\bigr)=\infty.
\]
Thus, $0\in (\Jmurz)^n(K_{n+1})$. Combining this with equation \eqref{intersectj}, we 
conclude that $(\Jmurz)^n(K_{n+1})=K_1$. By \eqref{equation:tildeJ1is0}, 
\[
(\Jmurz)^{n+1}(K_{n+1})=\{0\}.
\]

By induction, for every $n=1,2,\dots$, there exists a compact set $K_n\subseteq [0,1]$ with 
the property that $(\Jmurz)^n(K_n)=\{0\}$. We can now define the compact set $K$ by setting 
\[
K\coloneqq \{0\}\cup \bigcup_{n=2}^{\infty}  \Bigl(\frac{1}{n}+\frac{1}{2n(n-1)}  K_n \Bigr).
\]
Then, for each $n=2,3,\dots$, 
\[
(\Jmurz)^n(K)\cap \Bigl[\frac 1n,\frac 1{n-1}\Bigr[=\Bigl\{\frac 1n\Bigr\}.
\]
This implies that $\{0\}{\subseteq}(\Jmurz)^n(K)$ for every $n$ and, by \eqref{equation:radiiforexample},
\[
(\Jmurz)^{n+1}(K)\cap \Bigl[\frac 1n,\frac 1{n-1}\Bigr[=\emptyset
\]
for every $n$. Thus, the set $K$ satisfies the claim of Lemma \eqref{lemma-nonfiniteordinal}.
\end{proof}
 
Now, it is immediate to check that the inequality in \eqref{equation:radiiforexample} implies 
that 
\[
\Jmurz\left(\{0\} \right)=\emptyset,
\]
since $\mc{L}_1(B(0,r))= r$ for every $0<r<1$. In particular, for the set $K$ built in 
Lemma \ref{lemma-nonfiniteordinal}, one has
\[
\Jmurz\left(\bigcap_{n=1}^\infty(\Jmurz)^n(K)\right)=\Jmurz\left(\{0\} \right)=\emptyset,
\]
which implies that $\beta_0=\omega+1$ for $K$.

\subsection{Final conclusions}

We finish the paper with some remarks and open questions:

$\bullet$ By modifying the weights used for the measures $\mu_k$ in  the 
construction of measures $\muOne$ and $\muTwo$ in
Sections~\ref{example:dimensionishausdorffspectrum} and 
\ref{example:dimensionispackingspectrum}, it is possible to build a measure $\mu$ such that   
any value between $\overline{F}_{\mu}(\frac 1\alpha)$ and 
$\min \{  \frac 1\alpha,\overline{H}_{\mu}(\frac 1\alpha)\}$ can be obtained for 
the value of $f_{\mu}(\alpha)$.

\medskip
$\bullet$ For the measure $\muOne$ in Section~\ref{example:dimensionishausdorffspectrum}, we 
had that $\dimlocu \muOne(x)=u$ for 
$x\in C$, whereas for $\muTwo$ in Section~\ref{example:dimensionispackingspectrum}, 
$\dimlocu \muTwo(x)=s$ for $x\in C$. One might wonder for a general measure $\mu$ if 
$f_{\mu}(\alpha)$ can be determined when knowing the
behaviours of both $\dimlocl \mu$ and $\dimlocu \mu$, but this turns out not to be the case.
By considering the measures 
\[
\mu:=\gamma_0\sum_{k=0}^{\infty}w_k\mu_k,
\]
where $\gamma_0$ is a normalising constant and
\[
w_k{:=}\begin{cases}
    2^{-\beta \frac{\log \ell_k}{\log a}}& \text{ if } \ell_k=b\ell_{k-1},\\
    2^{-\beta k}& \text{ if } \ell_k=a\ell_{k-1},
\end{cases}
\]
similar calculations as in Sections~\ref{example:dimensionishausdorffspectrum} and 
\ref{example:dimensionispackingspectrum} show that $\dimlocu \mu(x)= u$ for $x\in C$ so that 
the lower and upper local dimensions agree with the local dimensions of the measure 
$\muOne$ at every point, but the value of 
$f_{\mu}(\alpha)$ agrees with the value obtained for the measure $\muTwo$.

\medskip
$\bullet$ Given Theorem~\ref{theorem:dimhatleastconjecture} and 
\cite[Theorem 2.5]{JJMS2025}, it is expected that for a general sequence 
$\underline{r}$ and a general measure $\mu\in \mathcal{P}(\R^d)$, the value 
$f_\mu(\underline{r})$ can certainly be bounded in terms of $s_2(\underline{r})$, 
$\overline{F}_\mu$, $\overline{H}_\mu$, and the essential suprema of the local dimensions of 
$\mu$. But, as observed in \cite{JJMS2025}, these three parameters are not expected to be sufficient to determine the value of $\dim_H E_{\underline{r}}$.

\medskip
$\bullet$ The question of the nonemptiness of $E_{\alpha}(\omega)\cap D_\mu(t)$ when 
$\frac1\alpha =t-F_\mu(t)$ in Theorem~\ref{thm:dimhofintersectionwithsublevelset} certainly 
deserves to be studied.

\end{document}